\newtheorem{assumption}{Assumption}[section]
\newtheorem{theorem}{Theorem}[section]
\newtheorem{lemma}{Lemma}[section]
\newtheorem{remark}{Remark}[section]
\newtheorem{proposition}{Proposition}[section]
\newtheorem{definition}{Definition}[section]
\newtheorem{corollary}{Corollary}[section]
\begin{document}
	\thispagestyle{empty}
	\title{Steady-state solutions for a reaction-diffusion equation with Robin boundary conditions: \\Application to the control of dengue vectors.}
	\date{\vspace{-5ex}}
	\maketitle
	\begin{center}
	    \author{L.~Almeida\footnotemark[1]\textsuperscript{,}\footnotemark[2], P.A.~Bliman\footnotemark[1]\textsuperscript{,}\footnotemark[2],
	    N.~Nguyen\footnotemark[3]\textsuperscript{,}\footnotemark[1],
	    N.~Vauchelet\footnotemark[3]}
	\end{center}
    \footnotetext[1]{MAMBA, Inria Paris; LJLL, Sorbonne University, 5 Place Jussieu, 75005 Paris, France} 
    \footnotetext[2]{CNRS University Paris Cite}
    \footnotetext[3]{LAGA, CNRS UMR 7539, Institut Galilée, Université Sorbonne Paris Nord, 99 avenue Jean-Baptiste Clément , 93430 Villetaneuse, France} 
	\begin{abstract}
		 In this paper, we investigate an initial-boundary-value problem of a reaction-diffusion equation in a bounded domain with a Robin boundary condition and introduce some particular parameters to consider the non-zero flux on the boundary. This problem arises in the study of mosquito populations under the intervention of the population replacement method, where the boundary condition takes into account the inflow and outflow of individuals through the boundary. Using phase-plane analysis, the present paper studies the existence and properties of non-constant steady-state solutions depending on several parameters. Then, we use the principle of linearized stability to prove some sufficient conditions for their stability. We show that the long-time efficiency of this control method depends strongly on the size of the treated zone and the migration rate. To illustrate these theoretical results, we provide some numerical simulations in the framework of mosquito population control. 
	\end{abstract}
	\tableofcontents
\section{Introduction}
	The study of scalar reaction-diffusion equations $\partial_t p - \Delta p = f(p)$ with a given nonlinearity $f$ has a long history. For suitable choices of $f$, this equation can be used to model some phenomena in biology such as population dynamics (see e.g. \cite{FIF}, \cite{MUR}, \cite{SMO}). To investigate the structure of the steady-state solutions, the semilinear elliptic equation $\Delta p + f(p) = 0$ has been studied extensively.
    
    Many results about the multiplicity of positive solutions for the parametrized version $\Delta p + \lambda f(p) = 0$ in a bounded domain are known. Here, $\lambda$ is a positive parameter. Various works investigated the number of solutions and the global bifurcation diagrams of this equation according to different classes of the nonlinearity $f$ and boundary conditions. For Dirichlet problems, in \cite{LIO}, Lions used many ``bifurcation diagrams" to describe the solution set of this equation with several kinds of nonlinearities $f$, and gave nearly optimal multiplicity results in each case. The exact number of solutions and the precise bifurcation diagrams with cubic-like nonlinearities $f$ were given in the works of  Korman {\it et. al.} \cite{KOR96}, \cite{KOR97}, Ouyang and Shi \cite{OUY98} and references therein. In these works, the authors developed a global bifurcation approach to obtain the exact multiplicity of positive solutions. In the case of one-dimensional space with two-point boundary, Korman gave a survey of this approach in \cite{KOR06}. Another approach was given by Smoller and Wasserman in \cite{SMO2} using phase-plane analysis and the time mapping method. This method was completed and applied in the works of Wang \cite{WAN1}, \cite{WAN2}. While the bifurcation approach is convenient to solve the problem with more general cubic nonlinearities $f$, the phase-plane method is more intuitive and easier to compute. 
    
   Although many results were obtained concerning the number of solutions for Dirichlet problems, relatively little seems to be known concerning the results for other kinds of boundary conditions. For the Neumann problem, the works of Smoller and Wasserman \cite{SMO2}, Schaaf \cite{SCHA}, and Korman \cite{KOR02} dealt with cubic-like nonlinearities $f$ in one dimension. Recently, more works have been done for Robin boundary conditions (see e.g. \cite{DAN}, \cite{SHI}, \cite{ZHA}), or even nonlinear boundary conditions (see e.g. \cite{GOD}, \cite{GOR} and references therein). However, those works only focused on other types of nonlinearities such as positive and monotone $f$. To the best of our knowledge, the study of Robin problems with cubic-like nonlinearities remains quite open.

    In this paper, we study the steady-state solutions with values in $[0,1]$ of a reaction-diffusion equation in one dimension with inhomogeneous Robin boundary conditions 
    \begin{equation}
		\begin{cases}
			\partial_t p^0 - \partial_{xx} p^0 = f(p^0) & \text{ in }  (0,\infty) \times \Omega , \\
			\frac{\partial p^0}{\partial \nu} = -D(p^0 - p^\text{ext}) & \text{ on } (0,\infty) \times \partial\Omega, \\
			p^0(0,\cdot) = p^\text{init} &  \quad\text{ in } \Omega,
		\end{cases}
		\label{eqn:pb1}
	\end{equation}
	where $\Omega = (-L,L)$ is a bounded domain in $\mathbb{R}$. The steady-state solutions satisfy the following elliptic boundary-value problem,
	\begin{equation}
		\begin{cases}
			-p'' & = f(p) \qquad \qquad \qquad  \text{ in } (-L,L), \\
 			p'(L) & = -D(p(L) - p^\text{ext}), \\
 			-p'(-L)   & = - D(p(-L) - p^\text{ext}). 
		\end{cases}
		\label{eqn:pb2}
	\end{equation} 
	where $L > 0$, $D > 0, p^\text{ext} \in (0,1)$ are constants. The reaction term $f: [0,1] \rightarrow \mathbb{R}$ is of class $\mathcal{C}^1$, with three roots $\{0, \theta, 1\}$ where $0 < \theta < 1$ (see \cref{fig:f(q)}). The dynamics of \cref{eqn:pb1} can be determined by the structure of steady-state solutions which satisfy \cref{eqn:pb2}. Note that, by changing variable from $x$ to  $y = x/L$, then \cref{eqn:pb2} becomes $p''(y) + L^2 f(p(y)) = 0$ on $(-1,1)$ with parameter $L^2$. Thus, we study problem \cref{eqn:pb2} with three parameters $L> 0, D >0$, and $p^\text{ext} \in (0,1)$.
	
	The Robin boundary condition considered in \cref{eqn:pb1} and \cref{eqn:pb2} means that the flow across the boundary points is proportional to the difference between the surrounding density and the density just inside the interval. Here we assume that $p^\text{ext}$ does not depend on space variable $x$ nor time variable $t$. 
	
	The existence of classical solutions for such problems was studied widely in the theory of elliptic and parabolic differential equations (see, for example, \cite{PAO}). In our problem, due to difficulties caused by the inhomogeneous Robin boundary condition and the variety of parameters, we cannot obtain the exact multiplicity of solutions. However, our main results in \cref{thm:exist} and \ref{thm:nonSM} show how the existence of solutions and their ``shapes'' depend on parameters $D, p^\text{ext}$ and $L$. The idea of phase plane analysis and time-map method as in \cite{SMO2} are extended to prove these results. 
	
	Since the solutions of \cref{eqn:pb2} are equilibria of \cref{eqn:pb1}, their stability and instability are the next problems that we want to investigate. The stability analysis of the non-constant steady-state solutions is a delicate problem especially when the system under consideration has multiple steady-state solutions. In \cref{thm:stability}, we use the principle of linearized stability to give some sufficient conditions for stability. Finally, as a consequence of these theorems, we obtain \cref{result} which provides a comprehensive result about existence and stability of the steady-state solutions when the size $L$ is small. 
	
	The main biological application of our results is the control of dengue vectors. {\it Aedes} mosquitoes are vectors of many vector-borne diseases, including dengue. Recently, a biological control method using an endosymbiotic bacterium called {\it Wolbachia} has gathered a lot of attention. {\it Wolbachia} helps reduce the vectorial capacity of mosquitoes and can be passed to the next generation. Massive release of mosquitoes carrying this bacterium in the field is thus considered as a possible method to replace wild mosquitoes and prevent dengue epidemics. Reaction-diffusion equations have been used in previous works to model this replacement strategy (see \cite{BAR, CHA, STR16}). In this work, we introduce the Robin boundary condition to describe the migration of mosquitoes through the boundary. Since inflows of wild mosquitoes and outflows of mosquitoes carrying {\it Wolbachia} may affect the efficiency of the method, the study of existence and stability of steady-state solutions depending on parameters $D, p^\text{ext}$ and $L$ as in \cref{eqn:pb2}, \cref{eqn:pb1} will provide necessary information to maintain the success of the control method using {\it Wolbachia} under the effects of migration. 
	
	Problem (\ref{eqn:pb1}) arises often in the study of population dynamics. $p^0$ is usually considered as the relative proportion of one population when there are two populations in competition. This is why, we only focus on solutions with values that belong to the interval $[0,1]$. \cref{eqn:pb1} is derived from the idea in paper \cite{STR16}, where the authors reduce a reaction-diffusion system modelling the competition between two populations $n_1$ and $n_2$ to a scalar equation on the proportion $p = \dfrac{n_1}{n_1 + n_2}$. More precisely, they consider two populations with a very high fecundity rate scaled by a parameter $\epsilon > 0 $ and propose the following system depending on $\epsilon$ for $t > 0, x \in \mathbb{R}^d$,
	\begin{equation}
		\begin{cases}
			\partial_t n_1^\epsilon - \Delta n_1^\epsilon = n_1^\epsilon f_1(n_1^\epsilon,n_2^\epsilon), \\
			\partial_t n_2^\epsilon - \Delta n_2^\epsilon = n_2^\epsilon f_2(n_1^\epsilon,n_2^\epsilon).
			\label{eqn:redu1}
		\end{cases}
	\end{equation}
	The authors obtained that under some appropriate conditions, the proportion $p^\epsilon = \dfrac{n_1^\epsilon}{n_1^\epsilon + n_2^\epsilon}$ converges strongly in $L^2(0,T;L^2(\mathbb{R}^d))$, and weakly in $L^2(0,T;H^1(\mathbb{R}^d))$ to the solution $p^0$ of the scalar reaction-diffusion equation $\partial_t p^0 - \Delta p^0 = f(p^0)$ when $\epsilon \rightarrow 0$ , where $f$ can be given explicitly from $f_1, f_2$. 
	
	Now, in order to describe and study the migration phenomenon, we aim here at considering system \cref{eqn:redu1} in a bounded domain $\Omega$ and introduce the boundary conditions to characterize the inflow and outflow of individuals as follows
	\begin{equation}
		\begin{cases}
			\frac{\partial n_1^\epsilon}{\partial \nu} = -D(n_1^\epsilon - n_1^{\text{ext},\epsilon}) & \text{ on } (0,T) \times \partial \Omega ,\\
			\frac{\partial n_2^\epsilon}{\partial \nu} = -D(n_2^\epsilon - n_2^{\text{ext},\epsilon}) & \text{ on }  (0,T) \times \partial \Omega,
			\label{eqn:redu2}
		\end{cases}
	\end{equation}
	where $n_1^{\text{ext},\epsilon}, n_2^{\text{ext},\epsilon}$ depend on $\epsilon$ but do not depend on time $t$ and position $x$. \cref{eqn:redu2} models the tendency of the population to cross the boundary, with rates proportional to the difference between the surrounding density and the density just inside $\Omega$. Reusing the idea in \cite{STR16}, we prove in \cref{sec:converge} that the proportion $p^\epsilon = \dfrac{n_1^\epsilon}{n_1^\epsilon + n_2^\epsilon}$ converges on any bounded time-domain to the solution of \cref{eqn:pb1} when $\epsilon$ goes to zero. Hence, we can reduce the system \cref{eqn:redu1}, \cref{eqn:redu2} to a simpler setting as in \cref{eqn:pb1}. The proof is based on a relative compactness argument that was also used in previous works about singular limits (e.g. \cite{STR16, HIL08, HIL13}), but here, the use of the trace theorem is necessary to prove the limit on the boundary. 

    The outline of this work is the following. In the next section, we present the setting of the problem and the main results. In \cref{sec:proof}, we provide detailed proof of these results. Section \ref{sec:bio} is devoted to an application to the biological control of mosquitoes. We also present numerical simulations to illustrate the theoretical results we obtained. \cref{sec:converge} is devoted to proving the asymptotic limit of a 2-by-2 reaction-diffusion system when the reaction rate goes to infinity. Finally, we end this article with a conclusion and perspectives section.

\section{Results on the steady-state solutions}
\label{sec:result}

\subsection{Setting of the problem}
	In one-dimensional space, consider the system \cref{eqn:pb1} in a bounded domain $\Omega = (-L,L) \subset \mathbb{R}$. Let $D > 0$, $p^\text{ext} \in (0,1)$ be some constant and $p^\text{init}(x) \in [0,1]$ for all $x \in (-L,L)$.
	The reaction term $f$ satisfies the following assumptions
	\begin{assumption}[bistability]
		\label{reaction}
		Function $f: [0,1] \rightarrow \mathbb{R}$ is of class $\mathcal{C}^1([0,1])$ and $f(0) = f(\theta) = f(1) = 0$ with $\theta \in (0,1)$, $f(q) < 0$ for all $q \in (0,\theta)$, and $f(q) > 0$ for all $q \in (\theta,1)$. Moreover, $\displaystyle \int_{0}^{1} f(s)ds > 0$. 
	\end{assumption}

    \begin{assumption}[convexity]
		\label{convexity}
		There exist $\alpha_1 \in (0,\theta)$ and $\alpha_2 \in (\theta,1)$ such that $f'(\alpha_1) = f'(\alpha_2) = 0$, $f'(q) < 0$ for any $q \in [0,\alpha_1) \cup (\alpha_2,1]$, and $f'(q) > 0$ for $q \in (\alpha_1,\alpha_2)$. Moreover, $f$ is convex on $(0,\alpha_1)$ and concave on $(\alpha_2,1)$.
	\end{assumption}
	A function $f$ satisfying \cref{reaction} and \cref{convexity} is illustrated in \cref{fig:f(q)}.
 	\begin{remark}
 	    \label{extreme}
 		\begin{enumerate}[label=(\alph*),leftmargin=1\parindent]
 		    \item[]
 		    \item Due to \cref{reaction} and the fact that $p^\text{ext} \in (0,1), p^\text{init}(x) \in [0,1]$ for any $x$, one has that $0$ and $1$ are respectively sub- and super-solution of problem \cref{eqn:pb1}. Since $f$ is Lipschitz continuous on $(0,1)$ then by Theorem 4.1, Section 2.4 in \cite{PAO}, we obtain that problem \cref{eqn:pb1} has a unique solution $p^0$ that is in $\mathcal{C}^{1,2}((0,T]\times \Omega)$  with $0 \leq p^0(t,x) \leq 1$ for all $x \in (-L,L), t > 0$. 
 			\item Again by Assumption \ref{reaction}, $0$ and $1$ are respectively sub- and super-solutions of \cref{eqn:pb2}. For fixed values of $D, p^\text{ext}$ and $L$, we use the same method as in \cite{PAO} to obtain that there exists a $\mathcal{C}^2$ solution of \cref{eqn:pb2} with values in $[0,1]$. However, \cref{reaction} and \cref{convexity} on $f$ are not enough to conclude the uniqueness of the solution. In the following section, we prove that the stationary problem \cref{eqn:pb2} may have multiple solutions and their existence depends on the values of the parameters.
 			\item
 			For any $p^\text{ext} \in (0,1)$ and $p^\text{ext} \neq \theta$, system \cref{eqn:pb2} cannot have a monotone solution on the whole interval $(-L,L)$. Indeed, assume that \cref{eqn:pb2} admits an increasing solution $p$ on $(-L,L)$ (the case when $p$ is decreasing on $(-L,L)$ is analogous). Thus, we have $p'(x) \geq 0$ for all $x \in [-L,L]$ and $p(L) > p(-L)$. So thanks to the boundary condition of \cref{eqn:pb2}, one has
 			\begin{center}
 			    $ D p^\text{ext} = p'(L) + Dp(L) \geq Dp(L) > Dp(-L) \geq -p'(-L) + Dp(-L) = D p^\text{ext},$
 			\end{center}
 			which is impossible. Therefore, we can deduce that the solutions of system \cref{eqn:pb2} always admit at least one locally extremum on the open interval $(-L,L)$.
  		\end{enumerate}
 	\end{remark}
 	
 	\noindent To study system \cref{eqn:pb2}, we define function $F$ (see \cref{fig:F(q)}) as below
	\begin{equation}
		F(q) = \displaystyle \int_{0}^{q} f(s)ds,
		\label{eqn:F}
	\end{equation} 
	then $F'(q) = f(q)$ and $F(0) = 0$. From \cref{reaction}, $F$ reaches the minimal value at $q = \theta$ and the (locally) maximal values at $q = 0$ and $q = 1$. Since $\displaystyle \int_0^1 f(s) ds >0$, then $F(1) > F(0)$, it implies that $F(1) = \displaystyle \max_{[0,1]} F; F(\theta) = \displaystyle \min_{[0,1]}F$.
	Moreover, since $F(\theta) < F(0)$ and function $F$ is monotone in $(\theta,1)$ ($F'(q) = f(q) > 0$ for any $q \in (\theta,1)$). Thus, there exists a unique value $\beta \in (\theta,1)$ such that 
	\begin{equation}
		F(\beta) = F(0) = 0.
		\label{eqn:beta}
	\end{equation}
	\begin{figure}
		\centering
		\begin{subfigure}{0.45\textwidth}
			\centering
			\includegraphics[scale=3]{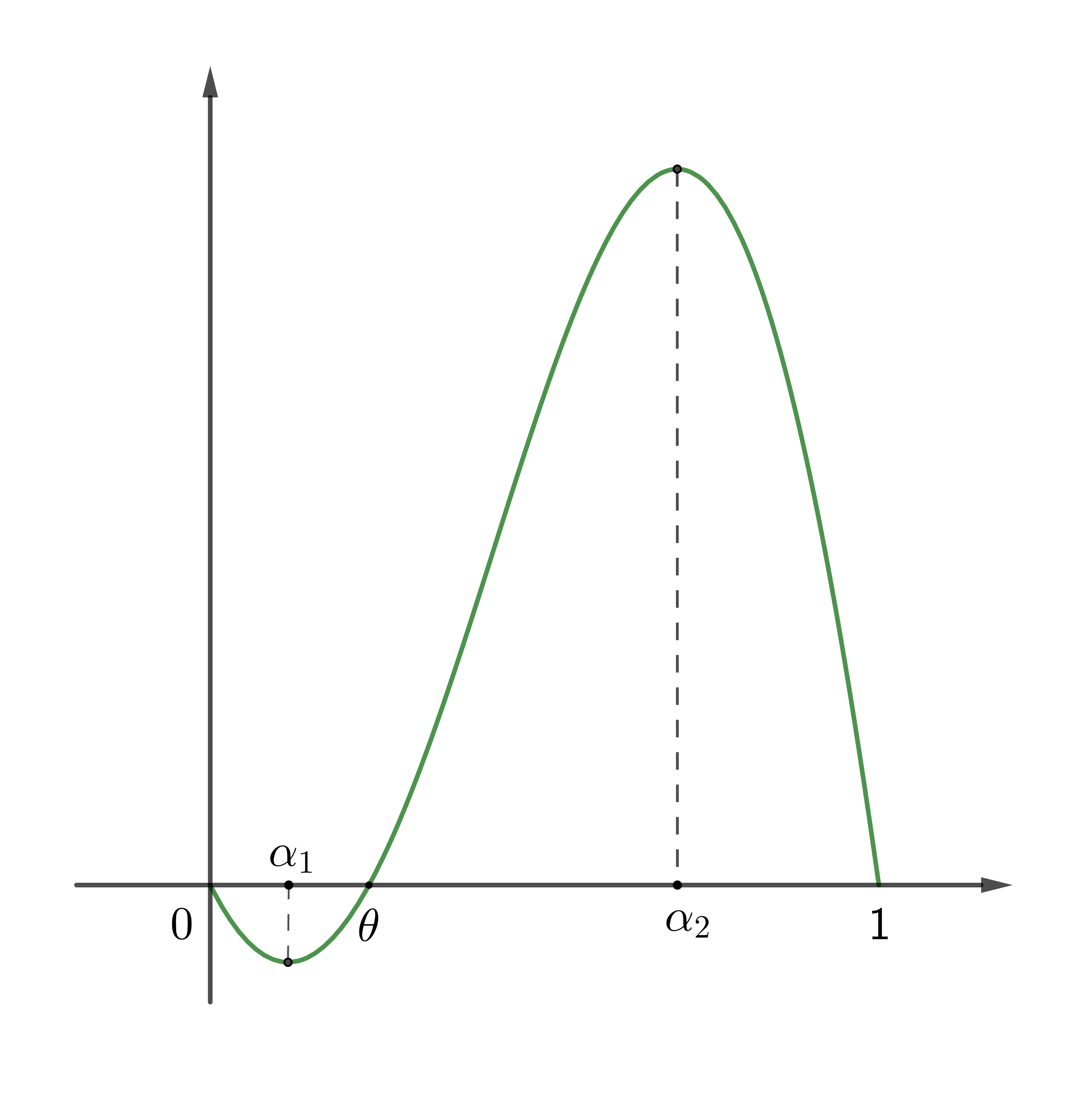} 
			\caption{$f(q)$}
			\setlength{\abovecaptionskip}{0pt}
			\setlength{\belowcaptionskip}{0pt}
			\label{fig:f(q)}
		\end{subfigure}
		\hfill
		\begin{subfigure}{0.45\textwidth}
			\centering
			\includegraphics[scale=3]{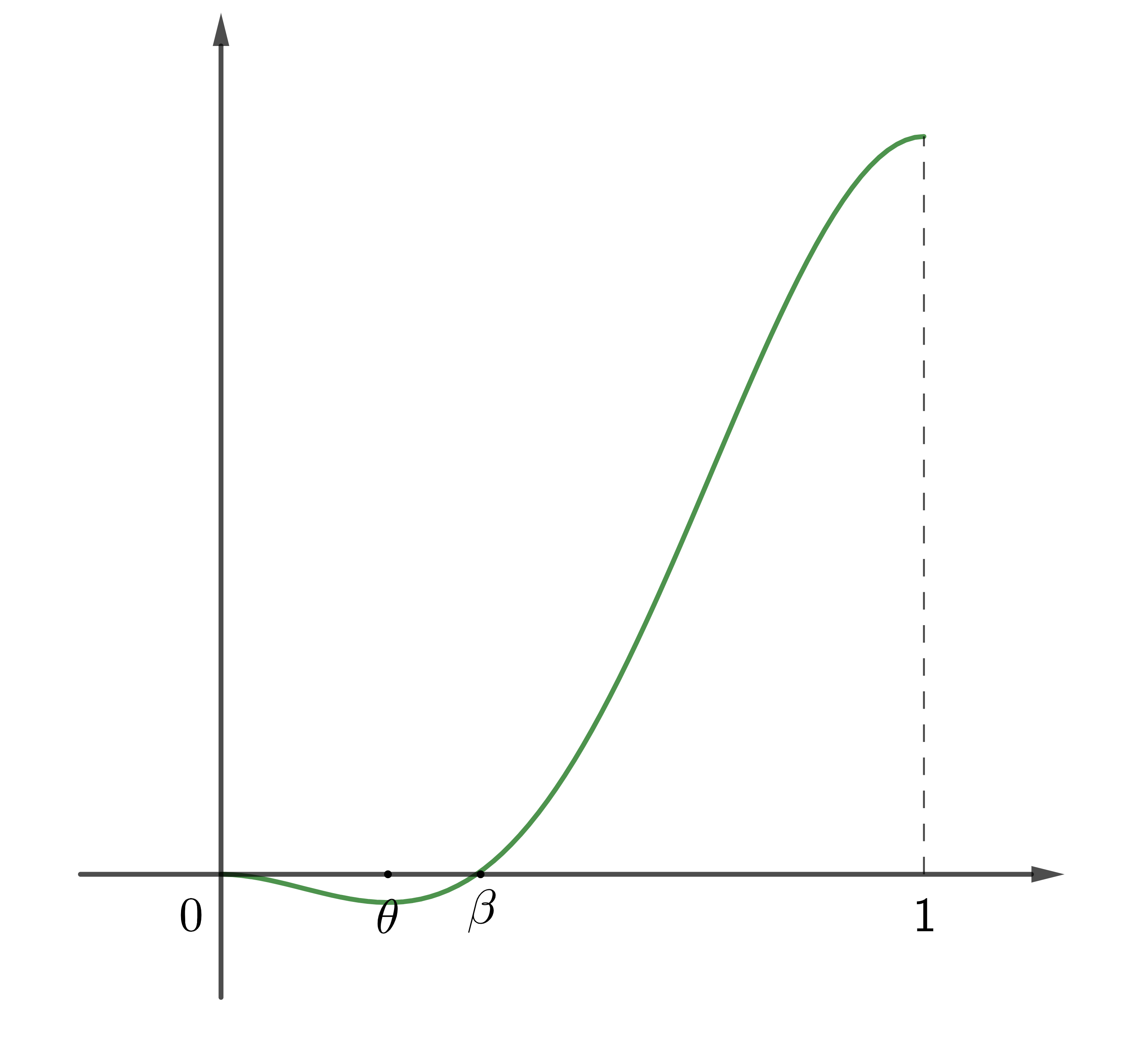} 
			\caption{$F(q)$}
			\setlength{\abovecaptionskip}{0pt}
			\setlength{\belowcaptionskip}{0pt}
			\label{fig:F(q)}
		\end{subfigure}
			\setlength{\abovecaptionskip}{0pt}
			\setlength{\belowcaptionskip}{0pt}
		\caption{Graph of functions $f$ and $F$}
		\label{fig:func}
	\end{figure} 
	The main results of the present work concerns existence and stability of steady-state solutions of \cref{eqn:pb1}, i.e. solutions of \cref{eqn:pb2}. 
	\subsection{Existence of steady-state solutions}
	\label{sec:exist}
	In our result, we first focus on two types of steady-state solutions defined as follows
	\begin{definition} Consider a steady-state solution $p(x)$,
		
		$p$ is called a symmetric-decreasing (SD) solution when $p$ is symmetric on $(-L,L)$ with values in $[0,1]$, decreasing on $(0,L)$ and $p'(0) = 0$ (see \cref{fig:p1}). 
		
		Similarly, $p$ is called a symmetric-increasing (SI) solution when $p$ is symmetric on $(-L,L)$ with values in $[0,1]$, increasing on $(0,L)$ and $p'(0) = 0$ (see \cref{fig:p2}). 
		
		Any solution which is either (SD) or (SI) is called a symmetric-monotone (SM) solution. 
	\end{definition}
	\begin{figure}
		\centering
		\begin{subfigure}{0.45\textwidth}
			\centering
			\includegraphics[width = \textwidth]{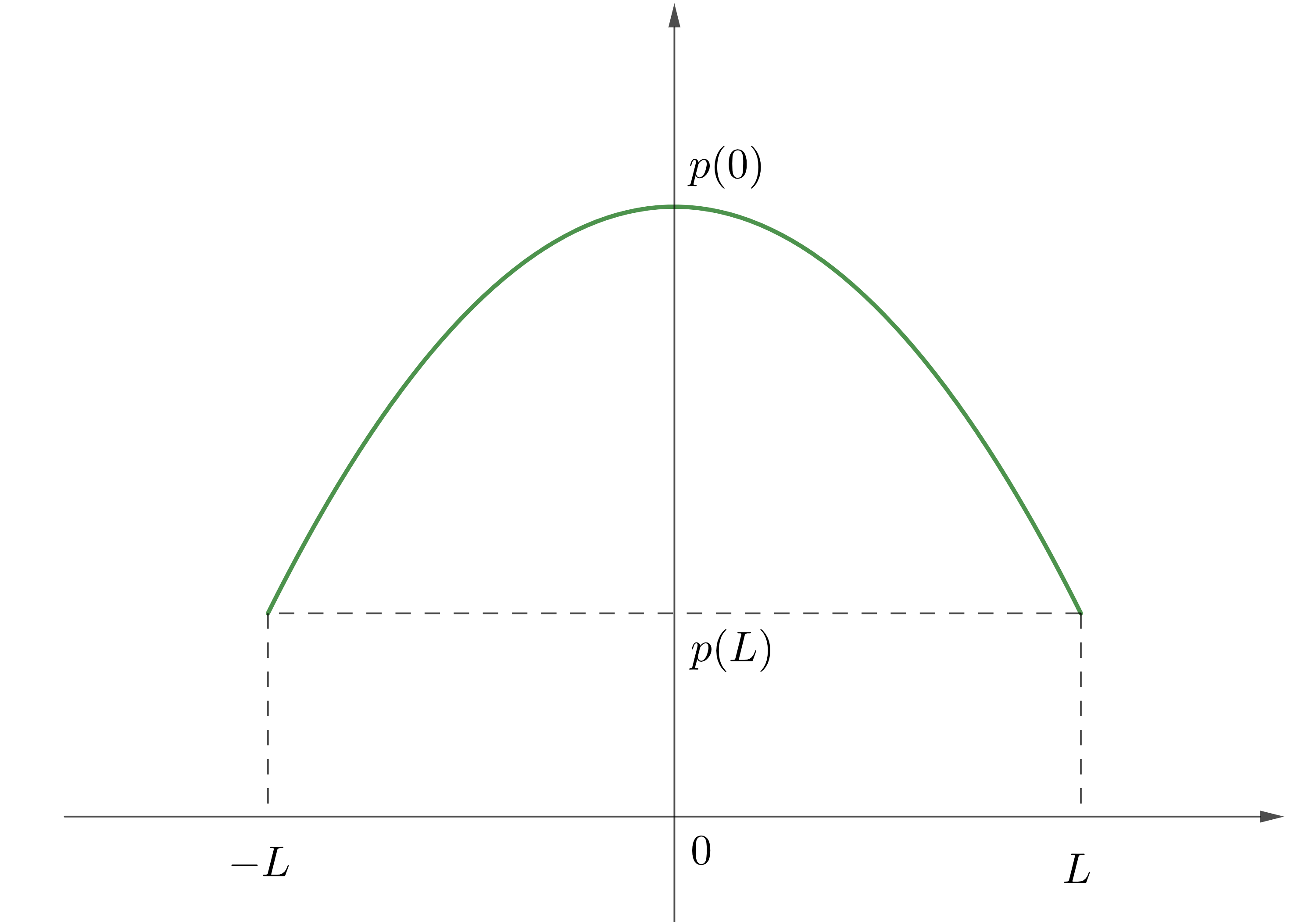} 
			\caption{(SD): $p$ decreasing on $(0,L)$}
			\label{fig:p1}
		\end{subfigure}
		\hfill
		\begin{subfigure}{0.45\textwidth}
			\centering
			\includegraphics[width = \textwidth]{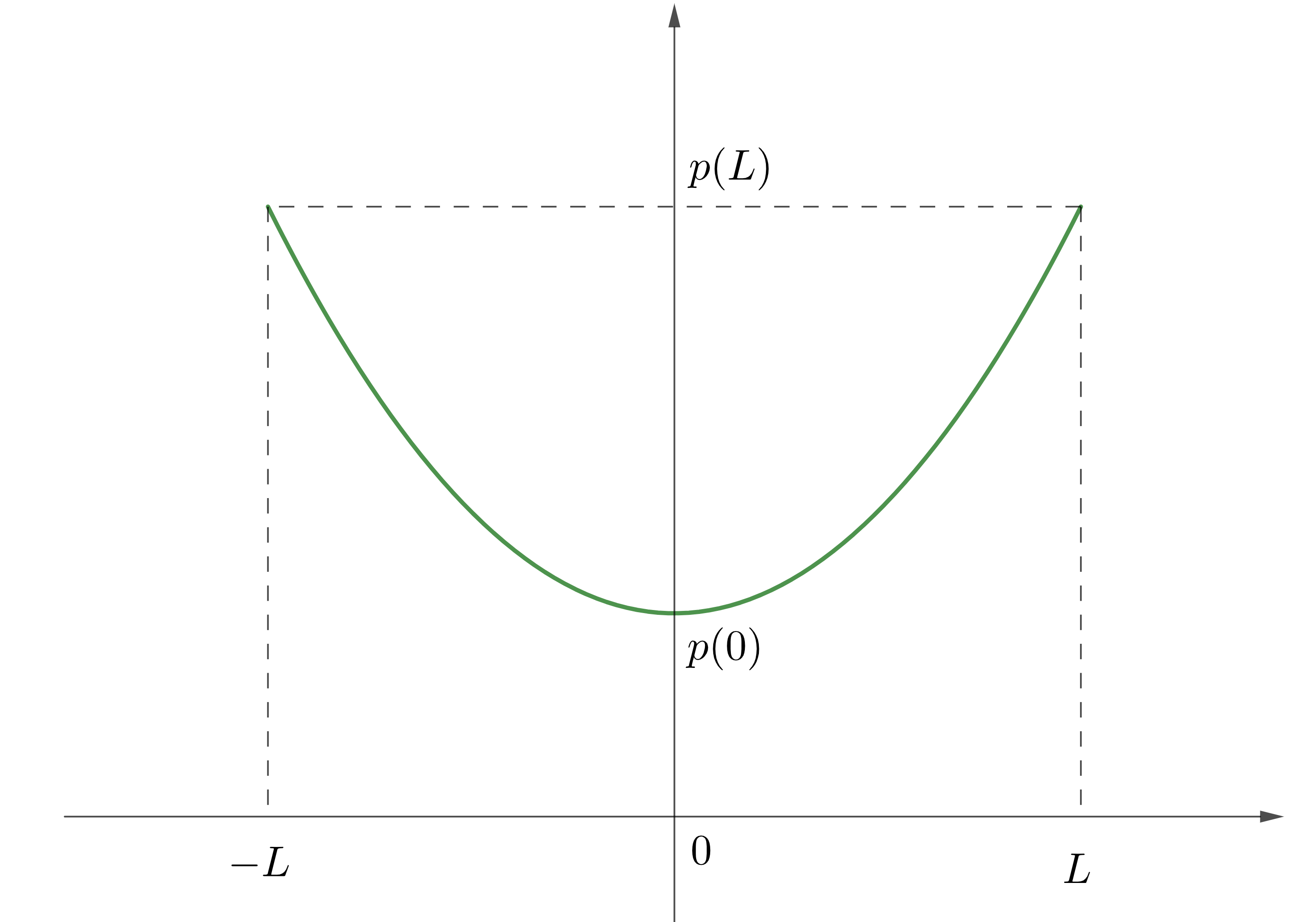} 
			\caption{(SI): $p$ increasing on $(0,L)$}
			\label{fig:p2}
		\end{subfigure}
			\setlength{\abovecaptionskip}{0pt}
			\setlength{\belowcaptionskip}{0pt}
		\caption{Symmetric steady-state solutions $p$}
		\label{fig:functionp}
	\end{figure} 

	The following theorems present the main result of existence of (SM) solutions depending upon the parameters. For each value of $p^\text{ext} \in (0,1)$ and $D > 0$, we find the critical values of $L$ such that \cref{eqn:pb2} admits solutions.
	\begin{theorem}
		\label{thm:exist}
		In a bounded domain $\Omega = (-L,L) \subset \mathbb{R}$, consider the stationary problem \cref{eqn:pb2}. Assume that the reaction term $f$ satisfies \cref{reaction} and \cref{convexity}. Then, there exist two functions
		\begin{equation}
		    \begin{array}{c r c l}
		     M_{d}, M_i: & (0,1) \times (0,+\infty) & \longrightarrow & [0,+\infty], \\
		     & (p^\text{ext},D) & \longmapsto & M_d(p^\text{ext},D), M_i(p^\text{ext},D),
		    \end{array}
		\end{equation}
		such that for any $p^\text{ext} \in (0,1), D > 0$, problem \cref{eqn:pb2} admits at least one (SD) solution (resp., (SI) solution) if and only if $L \geq M_{d}(p^\text{ext},D)$ (resp., $L \geq M_{i}(p^\text{ext},D)$) and the values of these solutions are in $[p^\text{ext}, 1]$ (resp., $[0,p^\text{ext}]$). More precisely,
		\begin{enumerate}[label=(\alph*),leftmargin=1\parindent]
			\item If $0 < p^\text{ext} < \theta$, then  for any $D > 0$, $M_{i}(p^\text{ext},D) = 0$ and $M_{d}(p^\text{ext},D) \in (0,+\infty)$.
			
			\noindent Moreover, if $p^\text{ext} \leq \alpha_1$, the (SI) solution is unique. 
			\item If $\theta < p^\text{ext} < 1$, then for any $D > 0$, $M_d(p^\text{ext},D) = 0$. If $\alpha_2 \leq p^\text{ext}$, the (SD) solution is unique. Moreover, consider $\beta$ as in \cref{eqn:beta},
			
			$\bullet$ if $p^\text{ext} \leq \beta$, then $M_i(p^\text{ext},D) \in (0,+\infty)$ for any $D > 0$;
			
			$\bullet$ if $p^\text{ext} > \beta$, then there exists a constant $D_* > 0$ such that $M_i(p^\text{ext},D) \in (0,+\infty)$ for any $D < D_*$, and $M_i(p^\text{ext}, D) = +\infty$ for $D \geq D_*$. 
			\item If $p^\text{ext} = \theta $, then $M_d(\theta, D) = M_i(\theta, D)= 0$. Moreover, there exists a constant solution $p \equiv \theta$.
		\end{enumerate}
	\end{theorem}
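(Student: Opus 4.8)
The plan is to carry out a phase-plane analysis based on the first integral of the autonomous equation $-p''=f(p)$. Multiplying by $p'$ and integrating gives the conserved energy $E=\tfrac12 (p')^2+F(p)$ along any solution, with $F$ as in \cref{eqn:F}. By \cref{extreme} every non-constant solution admits an interior extremum; for an (SM) solution this point is the center $x=0$ with $p'(0)=0$, so the energy equals $F(a)$ where $a:=p(0)$. Writing $b:=p(L)=p(-L)$ for the common boundary value, I would express the half-length as a time-map integral,
\begin{equation*}
  L \;=\; \int_{b}^{a}\frac{ds}{\sqrt{2\,(F(a)-F(s))}}\ \ (\mathrm{SD}),
  \qquad
  L \;=\; \int_{a}^{b}\frac{ds}{\sqrt{2\,(F(a)-F(s))}}\ \ (\mathrm{SI}),
\end{equation*}
valid as long as $F(s)<F(a)$ on the open interval of integration. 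Since $p''(0)$ has the sign of $-f(a)$, a center maximum forces $a\ge\theta$ and a center minimum forces $a\le\theta$; together with the sign of $b-p^\text{ext}$ below, this already explains why (SD) values lie in $[p^\text{ext},1]$ and (SI) values in $[0,p^\text{ext}]$.

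Next I would read the Robin condition geometrically in the $(p,p')$ phase plane. The condition $p'(L)=-D(p(L)-p^\text{ext})$ says that the endpoint $(b,p'(L))$ lies on the line $\ell_D:\,p'=-D(p-p^\text{ext})$ through $(p^\text{ext},0)$ with slope $-D$. Intersecting $\ell_D$ with the level curve $\tfrac12 (p')^2=F(a)-F(p)$ gives, for each admissible center value $a$, the boundary value $b=b(a,D,p^\text{ext})$: for (SD) the relevant intersection is on the lower branch at $b>p^\text{ext}$, for (SI) on the upper branch at $b<p^\text{ext}$. Together with the turning-point constraint $F(b)<F(a)$, this pins down the admissible interval of $a$, and substituting $b(a)$ into the time map gives $L$ as a function of $a$ alone. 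I would then prove that this function is continuous, compute its limits at the endpoints of the admissible $a$-interval, and conclude that its image is a half-line $[M,\infty)$; the critical value $M_d$ (resp. $M_i$) is defined as this minimum, and the equivalence \emph{``a solution exists iff $L\ge M$''} follows from the intermediate value theorem.

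The case distinctions come from locating the level curves relative to $\ell_D$. When $p^\text{ext}<\theta$ the small (SI) profiles near $p^\text{ext}$ make the time map arbitrarily small, giving $M_i=0$, while an (SD) profile needs $a\ge\theta$ and cannot degenerate to a small bump near $p^\text{ext}$ (such a bump would be a minimum, i.e.\ (SI)), so its width stays bounded below and $M_d\in(0,\infty)$; the reasoning for $p^\text{ext}>\theta$ is symmetric and yields $M_d=0$. The threshold $\beta$ of \cref{eqn:beta} enters through the turning points of (SI) trajectories: because $F(b)<F(a)\le F(0)=0$ forces every (SI) boundary value to satisfy $b<\beta$, when $p^\text{ext}>\beta$ the line $\ell_D$ meets the (SI) level curves only if its slope is shallow enough, producing the critical rate $D_*$ (finite $M_i$ for $D<D_*$, and $M_i=+\infty$ for $D\ge D_*$). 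The constant solution $p\equiv\theta$ for $p^\text{ext}=\theta$ is immediate since $f(\theta)=0$ makes the Robin condition hold identically.

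Finally, the uniqueness assertions (unique (SI) for $p^\text{ext}\le\alpha_1$, unique (SD) for $p^\text{ext}\ge\alpha_2$) would follow from strict monotonicity of the time map in $a$, and this is where \cref{convexity} is essential: on $(0,\alpha_1)$ and $(\alpha_2,1)$ the convexity, resp.\ concavity, of $f$ controls the sign of the derivative of the time-map integral. I expect this monotonicity computation to be the main obstacle. First, differentiating the time map requires desingularizing the square-root singularity at the turning point, e.g.\ via a substitution of the form $s=a+(b-a)u$ or an Opial-type change of variables. Second, and unlike the classical fixed-endpoint Dirichlet problem, the endpoint $b=b(a)$ now moves with $a$ through the implicit Robin relation, so the derivative carries an extra boundary term whose sign must be reconciled with the interior contribution governed by the convexity of $f$. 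Handling these two effects simultaneously is the technical heart of the argument.
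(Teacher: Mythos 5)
Your plan follows essentially the same route as the paper's proof: the conserved energy $E=\tfrac12(p')^2+F(p)$, the time-map integral for the half-length, the Robin condition read as the intersection of the orbit with the line $p'=\mp D(p-p^\text{ext})$ in the phase plane, the case split on $p^\text{ext}$ versus $\theta$ and $\beta$, the tangency mechanism behind $D_*$, and uniqueness via monotonicity of the time map driven by \cref{convexity}. The one substantive difference is the choice of parameter for the family of candidate orbits. You parametrize by the center value $a=p(0)$ and recover the boundary value by intersecting the level curve with $\ell_D$; since that intersection amounts to solving $G(b)=F(a)$ with $G(q)=F(q)+\tfrac12 D^2(q-p^\text{ext})^2$, and $G$ is not monotone on the admissible range (it has an interior minimum $\overline q$, cf.\ \cref{G}), your map $a\mapsto b$ is in general two-valued, so ``$L$ as a function of $a$ alone'' is really a union of branches, and the claim that its image is a half-line needs an extra word (take the global minimum over all branches and run the intermediate-value argument on the branch that blows up as $a\to 1$, resp.\ $a\to 0$). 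The paper instead parametrizes by the boundary value $q=p(L)$, which makes the center value explicit and single-valued, $p(0)=F_1^{-1}(G(q))$, produces the one-variable functionals $\mathcal{F}_1,\mathcal{F}_2$ of \cref{eqn:F1} and \cref{eqn:F2}, and confines the implicit differentiation in the uniqueness step to $\gamma(q)=F_1^{-1}(G(q))$ with $\gamma'(q)=G'(q)/f(\gamma(q))>1$ on the relevant range; both parametrizations work in the uniqueness cases (where $G'\neq 0$ on the interval in question), but the boundary-value one is the cleaner bookkeeping. Finally, the two computations you flag as the technical heart --- the desingularized derivative of the time map, and the explicit threshold $D_*$, which the paper obtains from the auxiliary function $H(q)=F(q)+\tfrac12 f(q)(p^\text{ext}-q)$ and the solvability condition $\min_{[0,1]}G<F(0)$ --- are indeed exactly where \cref{Type1} (case 2) and \cref{Type2} (case 3) spend their effort; your plan locates them correctly but leaves them unexecuted.
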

	In the statement of the above result, $M_i = 0$ means that for any $L >0$, \cref{eqn:pb2} always admits (SI) solutions. $M_i = +\infty$ means that there is no (SI) solution even when $L$ is large. The same interpretation applies for $M_d$. 
	
	Besides, problem \cref{eqn:pb2} can also admit solutions which are neither (SD) nor (SI). The following theorem provides an existence result for those solutions. 
	\begin{theorem}
	    \label{thm:nonSM}
	    In a bounded domain $\Omega = (-L,L) \subset \mathbb{R}$, consider the stationary problem \cref{eqn:pb2}. Assume that the reaction term $f$ satisfies \cref{reaction} and \cref{convexity}. Then, there exists a function 
	    	\begin{equation}
		    \begin{array}{c r c l}
		     M_*: & (0,1) \times (0,+\infty) & \longrightarrow & [0,+\infty], \\
		     & (p^\text{ext},D) & \longmapsto & M_*(p^\text{ext},D),
		    \end{array}
		\end{equation}
		such that for any $p^\text{ext} \in (0,1), D > 0$, problem \cref{eqn:pb2} admits at least one solution which is not (SM) if and only if $L \geq M_{*}(p^\text{ext},D)$. Moreover,
	    
	    \noindent $\bullet$ If $p^\text{ext} \leq \beta $, then for any $D > 0$, one has
	    \begin{equation}
	       0 <  M_i(p^\text{ext},D) + M_d(p^\text{ext},D) < M_*(p^\text{ext},D) < +\infty.
	    \end{equation}
	    $\bullet$ If $p^\text{ext} > \beta $, then for any $D < D_*$, one has $0 < M_i(p^\text{ext},D) < M_*(p^\text{ext},D) < +\infty$. Otherwise, for $D \geq D_*$, $M_*(p^\text{ext},D) = +\infty$. Here, $D_*$ was defined in \cref{thm:exist}.
	\end{theorem}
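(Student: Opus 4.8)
The plan is to push the phase-plane and time-map analysis already used for \cref{thm:exist} one step further. Writing the first-order system $p'=q$, $q'=-f(p)$, every solution of \cref{eqn:pb2} lies on a level set of the energy $E=\tfrac12 q^2+F(p)$, and the two Robin conditions say exactly that the orbit starts on the line $\ell_-=\{q=D(p-p^\text{ext})\}$ at $x=-L$ and ends on the line $\ell_+=\{q=-D(p-p^\text{ext})\}$ at $x=L$, the elapsed ``time'' being $2L$; note that $\ell_\pm$ are exchanged by the reflection $(p,q)\mapsto(p,-q)$. First I would record the dichotomy that makes the theorem meaningful: a solution with a single interior critical point is forced to be symmetric (apply $x\mapsto -x$, which preserves \cref{eqn:pb2}, together with uniqueness of the single-bump orbit of given energy), hence is (SM). Therefore a non-(SM) solution is precisely one whose orbit meets $\{q=0\}$ at least twice, i.e. has at least two interior extrema.

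Next I would exhibit the minimal non-(SM) orbit and compute $2L$ for it. Since the energy is constant, two interior extrema at heights $p_{\min}<p_{\max}$ force $F(p_{\min})=F(p_{\max})=E$, which with $p_{\min}<\theta<p_{\max}$ is possible only on a closed orbit surrounding the center $(\theta,0)$, i.e. for $E\in(F(\theta),0)$, for which necessarily $p_{\max}<\beta$. When $p^\text{ext}<\beta$ one can place $p^\text{ext}$ between $p_{\min}$ and $p_{\max}$: the orbit enters $\ell_-$ and leaves $\ell_+$ on the lower branch $q<0$, running clockwise through the minimum, over the top, and through the maximum, an \emph{asymmetric} two-extremum solution. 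When $p^\text{ext}>\beta$ the whole closed orbit stays to the left of $p^\text{ext}$, the two boundary intersections land on branches with opposite signs of $q$, and the shortest admissible non-(SM) orbit is instead the \emph{symmetric} one with a central maximum and two lateral minima, whose outer ascending pieces are exactly (SI) half-orbits; this requires $\ell_+$ to meet the upper branch, i.e. the existence of an (SI) solution, which by \cref{thm:exist} happens iff $D<D_*$.

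The heart of the argument is then a time-map bookkeeping. For the two-extremum orbit I split $2L$ into $\int_{p_{\min}}^{p_-}+\int_{p_{\min}}^{p_{\max}}+\int_{p_+}^{p_{\max}}$ of $dp/\sqrt{2(E-F(p))}$, where $p_-<p^\text{ext}<p_+$ are the two boundary abscissae. Using the reflection symmetry of $\ell_\pm$ one identifies $\int_{p_{\min}}^{p_-}$ with the (SI) half-width $L_i$ for minimum $p_{\min}$ and $\int_{p_+}^{p_{\max}}$ with the (SD) half-width $L_d$ for maximum $p_{\max}$, and splitting the middle integral at $p_-,p_+$ gives the clean identity
\[
L = L_i + L_d + \tfrac12\int_{p_-}^{p_+}\frac{dp}{\sqrt{2(E-F(p))}}.
\]
Since $p_-<p_+$ the last term is strictly positive, so $L>L_i+L_d\ge M_i+M_d$ for every admissible orbit; the analogous computation in the case $p^\text{ext}>\beta$ reads $L=2L_i+(\text{positive})$, hence $L>M_i=M_i+M_d$ there because $M_d=0$. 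For finiteness I would let $E\to F(\theta)^+$, where the orbit shrinks to the center and the elapsed time stays finite, so $M_*<\infty$ whenever the construction is non-empty, and $E\to 0^-$, where the period blows up and $L\to\infty$; continuity of $E\mapsto L(E)$ then fills the ray $[M_*,\infty)$, which is the ``if'' half, while ``only if'' is the definition of $M_*$ as an infimum, and the strict inequality $M_*>M_i+M_d$ follows from the above identity once the minimum is attained. The case split at $p^\text{ext}=\beta$ and the threshold $D_*$ are inherited from the (SI) analysis of \cref{thm:exist}.

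The step I expect to be genuinely delicate is the monotonicity (or at least unimodality) of the time-map $E\mapsto L(E)$ on $(F(\theta),0)$, together with the matching behaviour of the boundary-determined endpoints $p_\pm(E)$. Without it one cannot conclude that the admissible lengths form exactly a half-line $[M_*,\infty)$, that the infimum defining $M_*$ is attained (which is what yields strictness), or that no competing multi-bump orbit with three or more extrema produces a shorter non-(SM) solution and thereby lowers the threshold. This is the classical obstruction in Smoller--Wasserman/Wang-type arguments, aggravated here by the inhomogeneous Robin endpoints, whose $E$-dependence must be differentiated and controlled using the convexity and concavity of $f$ from \cref{convexity}; I would expect to need a careful sign analysis of $dL/dE$ assembled from $\int dp/\sqrt{2(E-F(p))}$ and the implicit derivatives of $p_\pm(E)$.
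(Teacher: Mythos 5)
Your time-map decomposition for the two-extremum orbit is essentially the paper's argument: the paper writes $2L=\mathcal{F}_1(p(-L))+\int_{p_{\min}}^{p_{\max}}\frac{ds}{\sqrt{2F(p_{\min})-2F(s)}}+\mathcal{F}_2(p(L))$ and asserts the middle term dominates $\mathcal{F}_1+\mathcal{F}_2$; your identity $L=L_i+L_d+\tfrac12\int_{p_-}^{p_+}\frac{dp}{\sqrt{2(E-F(p))}}$ is the same computation made explicit, and it is a cleaner route to the strict inequality $M_*>M_i+M_d$ in the case $p^{\text{ext}}\le\beta$.

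However, there is a genuine error in your opening reduction. You claim that a solution with a single interior critical point is forced to be symmetric, so that non-(SM) solutions are exactly those with at least two interior extrema. This is false: the boundary conditions only impose $G(p(-L))=G(p(L))$ with $G(q)=F(q)+\tfrac12D^2(q-p^{\text{ext}})^2$, and since $G$ is not injective (it decreases then increases, by \cref{G}) this equation admits pairs $p(-L)\neq p(L)$ lying on either side of the minimizer $\overline{q}$. The reflection $x\mapsto-x$ then produces a \emph{different} solution rather than fixing the given one, so there exist asymmetric solutions with a single interior minimum. The paper exploits exactly this: its orbit $T_4$ (case $p^{\text{ext}}>\beta$, $D<D_*$) is an asymmetric single-minimum solution with $2L=\mathcal{F}_2(p(-L))+\mathcal{F}_2(p(L))>2M_i$, and it is this orbit --- not your symmetric three-extremum ``two lateral minima'' configuration --- that governs the threshold in the second bullet. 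With your classification, the ``only if'' direction of the theorem fails for $p^{\text{ext}}>\beta$: a $T_4$-type non-(SM) solution can exist for $L$ strictly below the minimal length of any multi-extremum orbit, so the $M_*$ you would construct is too large. Separately, the step you flag as delicate (continuity/unimodality of the length in the energy, attainment of the infimum, and ruling out shorter multi-bump competitors) is indeed left open in your proposal; the paper treats it only by asserting continuity of $L$ in $p(-L)$ and that each additional extremum adds a full sweep $\int_{p_{\min}}^{p_{\max}}$, so you are not missing an idea the paper supplies, but your write-up would still need that argument to close the ``if and only if''.
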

    \noindent The construction of $M_i, M_d, M_*$ will be done in the proof in \cref{sec:proof}. The idea of the proof is based on a careful study of the phase portrait of \cref{eqn:pb2}. 
    
	In the next section, we present a result about stability and instability of steady-state solutions of problem \cref{eqn:pb2}. 
	\subsection{Stability of steady-state solutions}
	The definition of stability and instability used in the present work comes from Lyapunov stability
	\begin{definition}
		A steady-state solution $p(x)$ of \cref{eqn:pb1} is called stable if for any constant $\epsilon > 0$, there exists a constant $\delta > 0$ such that when $||p^\text{init} - p||_{ \infty}  < \delta$, one has 
		\begin{equation}
			||p^0(t,\cdot) - p||_{ \infty}  < \epsilon, \quad \text{ for all } t > 0
		\end{equation}
	where $p^0(t,x)$ is the unique solution of \cref{eqn:pb1}. If, in addition, 
	\begin{equation}
		\displaystyle \lim_{t \rightarrow \infty}||p^0(t,\cdot) - p||_{ \infty} = 0,
	\end{equation}
	then $p$ is called asymptotically stable. The steady-state solution $p$ is called unstable if it is not stable. 
	\end{definition}
	
	The following theorem provides sufficient conditions for the stability of steady-state solutions given in \cref{sec:exist}.
	\begin{theorem}
	\label{thm:stability}
	In the bounded domain $\Omega = (-L,L) \subset \mathbb{R}$, consider the problem \cref{eqn:pb1} with the reaction term satisfying \cref{reaction} and \cref{convexity}. There exists a constant $\lambda_1 \in \left(0,\dfrac{\pi^2}{4L^2} \right)$ such that for any steady-state solution $p$ of \cref{eqn:pb1},
	
	$\bullet$ If $f'(p(x)) > \lambda_1$ for any $x \in (-L,L)$, then $p$ is unstable.
	
	$\bullet$ If $f'(p(x)) < \lambda_1$ for any $x \in (-L,L)$, then $p$ is asymptotically stable. 
	\end{theorem}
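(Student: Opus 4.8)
The plan is to use the principle of linearized stability, so first I would linearize~\cref{eqn:pb1} around a steady state $p$. Writing $p^0 = p + w$ and keeping only first-order terms, the perturbation $w$ solves the linear parabolic problem $\partial_t w - \partial_{xx} w = f'(p(x))\,w$ on $(-L,L)$ with the \emph{homogeneous} Robin conditions $w'(L) = -Dw(L)$ and $-w'(-L) = -Dw(-L)$ (the inhomogeneous term $Dp^\text{ext}$ drops out upon linearization). Stability is then governed by the sign of the principal eigenvalue of the associated self-adjoint Sturm--Liouville operator: I would study the eigenvalue problem
\begin{equation*}
	-\varphi'' - f'(p(x))\,\varphi = \mu\,\varphi \quad\text{in }(-L,L), \qquad \varphi'(L) + D\varphi(L) = 0,\ -\varphi'(-L) + D\varphi(-L) = 0,
\end{equation*}
whose smallest eigenvalue $\mu_1$ controls the decay rate of $w$. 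The classical linearized-stability theorem (applicable since $f \in \mathcal{C}^1$ and the steady states are bounded in $[0,1]$) asserts that $p$ is asymptotically stable if $\mu_1 > 0$ and unstable if $\mu_1 < 0$.

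The key step is to produce the threshold constant $\lambda_1$ by comparing $\mu_1$ with the spectrum of the \emph{drift-free} Robin operator $-\partial_{xx}$ on $(-L,L)$. By the Rayleigh quotient characterization,
\begin{equation*}
	\mu_1 = \min_{\varphi \neq 0} \frac{\displaystyle\int_{-L}^{L} |\varphi'|^2\,dx + D\bigl(|\varphi(L)|^2 + |\varphi(-L)|^2\bigr) - \int_{-L}^{L} f'(p(x))\,|\varphi|^2\,dx}{\displaystyle\int_{-L}^{L} |\varphi|^2\,dx}.
\end{equation*}
I would define $\lambda_1$ as the principal eigenvalue of the pure Robin Laplacian $-\varphi'' = \lambda\varphi$ with the same boundary conditions, i.e.\ $\lambda_1 = \min_{\varphi\neq 0}\bigl(\int|\varphi'|^2 + D(|\varphi(L)|^2+|\varphi(-L)|^2)\bigr)/\int|\varphi|^2$, which is strictly positive (the numerator vanishes only on constants, for which the boundary term is positive) and, by domain monotonicity against the Dirichlet eigenvalue $\pi^2/(4L^2)$, satisfies $\lambda_1 \in (0,\pi^2/(4L^2))$. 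With this choice, if $f'(p(x)) < \lambda_1$ pointwise, then $\mu_1 \ge \lambda_1 - \sup_x f'(p(x)) > 0$, giving asymptotic stability; conversely, testing the Rayleigh quotient for $\mu_1$ against the principal Robin eigenfunction $\varphi_1 > 0$ shows $\mu_1 \le \lambda_1 - \inf_x f'(p(x)) < 0$ when $f'(p(x)) > \lambda_1$ everywhere, giving instability.

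I would carry this out in the order: (i) derive the linearized problem and state the linearized-stability principle; (ii) set up the self-adjoint eigenvalue problem and record that $\mu_1$ is simple with positive eigenfunction; (iii) define $\lambda_1$ as the principal Robin--Laplacian eigenvalue and establish $0 < \lambda_1 < \pi^2/(4L^2)$; (iv) apply the two-sided Rayleigh-quotient estimate to conclude. The main obstacle is rigorously invoking the linearized-stability principle in this infinite-dimensional parabolic setting with \emph{inhomogeneous} Robin data: one must confirm that the semigroup generated by the Robin Laplacian is analytic and that the spectral gap from $\mu_1 > 0$ transfers to nonlinear asymptotic stability in the $\|\cdot\|_\infty$ norm used in the definition. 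This typically requires a sectorial-operator / fractional-power-space argument (or a careful comparison-principle argument exploiting the bistable structure) to bridge the $L^2$ spectral estimate and the uniform-norm stability statement; handling the strict-inequality boundary cases and the edge value $\mu_1 = 0$ (excluded here) is the delicate point.
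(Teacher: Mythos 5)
Your plan is sound, and your $\lambda_1$ is exactly the paper's: the principal eigenvalue of the Robin Laplacian $-\varphi''=\lambda\varphi$ with $\varphi'(\pm L)=\mp D\varphi(\pm L)$, whose eigenfunction is $\cos\left(\sqrt{\lambda_1}x\right)$ and which is characterized by $\sqrt{\lambda_1}\tan\left(L\sqrt{\lambda_1}\right)=D$, giving $\lambda_1\in\left(0,\pi^2/(4L^2)\right)$. But your route is genuinely different. You analyze the linearized operator $-\partial_{xx}-f'(p(x))$, locate its principal eigenvalue $\mu_1$ by Rayleigh-quotient comparison, and then invoke the abstract principle of linearized stability; you correctly flag that the passage from the sign of $\mu_1$ to nonlinear (in)stability in the $\|\cdot\|_\infty$ norm is the hard part, and you leave it as an acknowledged obstacle. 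The paper never forms the operator with potential at all: it uses the pointwise hypotheses directly to build explicit comparison functions out of the potential-free eigenfunction $\phi_1$, namely $p\pm\delta e^{-\gamma t}\phi_1(x)$ for stability and the sub-solution $p+\delta(1-\sigma e^{-\gamma' t})\phi_1(x)$ for instability, and then applies the super-/sub-solution method. This is precisely the ``careful comparison-principle argument'' you mention parenthetically, and it is the cheaper path here: the resulting trapping $|p^0(t,x)-p(x)|\le\delta e^{-\gamma t}\phi_1(x)$ \emph{is} the uniform-norm asymptotic stability statement, so no sectorial-operator or fractional-power-space machinery is needed. What your spectral route would buy, if completed, is a sharper criterion (stability or instability under $\mu_1>0$ or $\mu_1<0$, which is weaker than the pointwise conditions on $f'(p)$); what it costs is exactly the semigroup bridge you identify. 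One small technical point: since $f'(p(\cdot))$ is only controlled on the open interval, your bounds $\mu_1\ge\lambda_1-\sup_x f'(p(x))$ and $\mu_1\le\lambda_1-\inf_x f'(p(x))$ can degenerate to non-strict inequalities when the sup or inf is attained only in the limit at $x=\pm L$; to keep the strict sign of $\mu_1$ you should use the integral comparison $\int f'(p)\varphi_1^2 \gtrless \lambda_1\int\varphi_1^2$ with the positive eigenfunction $\varphi_1$ instead of the pointwise extremum.
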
 	
	
    The principle of linearized stability is used to prove this theorem (see \cref{sec:proof}). $\lambda_1$ is the principle eigenvalue of the linear problem and its value is the smallest positive solution of equation $\sqrt{\lambda}\tan{\left(L\sqrt{\lambda}\right)} = D$.
    \begin{remark}
        By \cref{convexity}, $f'(q) \leq  0 < \lambda_1$ for any $q \in [0,\alpha_1] \cup [\alpha_2,1] $, we can deduce that the steady-state solutions with values smaller than $\alpha_1$ or larger than $\alpha_2$ are asymptotically stable. 
    \end{remark}
    
    As a consequence of Theorems \ref{thm:exist}, \ref{thm:nonSM}, and \ref{thm:stability}, the following important result provides complete information about existence and stability of steady-state solutions in some special cases. 
    
    \begin{corollary}
    \label{result}
        In the bounded domain $\Omega = (-L,L) \subset \mathbb{R}$, consider the problem \cref{eqn:pb1} with the reaction term satisfying \cref{reaction} and \cref{convexity}. Then for any $D > 0$, we have
        
        $\bullet$ If $p^\text{ext} \leq \alpha_1$, for any $L > 0$, there exists exactly one (SI) steady-state solution $p$ and it is asymptotically stable. Moreover, if $L < M_d(p^\text{ext},D)$, then $p$ is the unique steady-state solution of \cref{eqn:pb1}.
        
        $\bullet$ If $p^\text{ext} \geq \alpha_2$, for any $L > 0$, there exists exactly one (SD) steady-state solution $p$ and it is asymptotically stable. Moreover, if $L < M_i(p^\text{ext},D)$, then $p$ is the unique steady-state solution of \cref{eqn:pb1}.
    \end{corollary}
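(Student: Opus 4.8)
The plan is to derive this corollary as a pure bookkeeping consequence of \cref{thm:exist}, \cref{thm:nonSM} and \cref{thm:stability}, handling the two bullets symmetrically. The organizing observation is that every non-constant steady-state solution falls into exactly one of three mutually exclusive classes — (SD), (SI), or not-(SM) — so that counting the total number of solutions reduces to counting within each class separately, which is precisely what the three theorems let me do through the thresholds $M_d, M_i, M_*$.

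For the first bullet ($p^\text{ext} \le \alpha_1$), I would note that $\alpha_1 < \theta$ places us in case (a) of \cref{thm:exist}, giving $M_i(p^\text{ext},D) = 0$; hence an (SI) solution exists for every $L>0$, and the hypothesis $p^\text{ext} \le \alpha_1$ makes it the unique (SI) solution, with range contained in $[0,p^\text{ext}] \subseteq [0,\alpha_1]$. For asymptotic stability I would invoke \cref{convexity}, which yields $f'(q) \le 0 < \lambda_1$ on $[0,\alpha_1]$; thus $f'(p(x)) < \lambda_1$ for all $x$, and the second alternative of \cref{thm:stability} applies. For the uniqueness claim under $L < M_d(p^\text{ext},D)$, I rule out the other two classes: there is no (SD) solution because $L < M_d$, and there is no non-(SM) solution because $p^\text{ext} \le \alpha_1 < \beta$ lets \cref{thm:nonSM} supply the chain $L < M_d = M_i + M_d < M_*$ (using $M_i = 0$), whence $L < M_*$.

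The second bullet ($p^\text{ext} \ge \alpha_2$) is the mirror image via case (b) of \cref{thm:exist}: $M_d(p^\text{ext},D) = 0$ gives an (SD) solution for all $L$, unique since $p^\text{ext} \ge \alpha_2$, with range in $[p^\text{ext},1] \subseteq [\alpha_2,1]$, so that the convexity bound $f'(q)\le 0 < \lambda_1$ on $[\alpha_2,1]$ and \cref{thm:stability} again give asymptotic stability. For uniqueness when $L < M_i(p^\text{ext},D)$ I exclude (SI) solutions (none, as $L < M_i$) and non-(SM) solutions, splitting on $\beta$: if $p^\text{ext} \le \beta$, or $p^\text{ext} > \beta$ with $D < D_*$, then \cref{thm:nonSM} gives $M_i < M_*$, so $L < M_i < M_*$; if $p^\text{ext} > \beta$ and $D \ge D_*$, then $M_i = +\infty$ makes the hypothesis $L < M_i$ hold for every $L$, while \cref{thm:nonSM} gives $M_* = +\infty$ and \cref{thm:exist} gives no (SI) solution, so the (SD) solution is again the only one.

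I do not expect a genuine analytic obstacle, since the substantive work is already contained in the three theorems; the delicate point is organizational. The argument hinges on the strict separations $M_d < M_*$ and $M_i < M_*$ furnished by \cref{thm:nonSM}, which is exactly what converts ``no (SM) solution of the opposite monotonicity'' into ``a unique solution overall,'' and on correctly treating the degenerate regime $p^\text{ext} > \beta$, $D \ge D_*$ where several thresholds become $+\infty$ simultaneously. Some care is also needed to confirm the \emph{strict} inequality $f'(p(x)) < \lambda_1$ required by \cref{thm:stability} rather than merely $\le$, which holds because $\lambda_1 > 0$ while $f' \le 0$ on the relevant ranges $[0,\alpha_1]$ and $[\alpha_2,1]$.
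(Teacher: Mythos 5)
Your proposal is correct and follows essentially the same route as the paper: existence and uniqueness within each class from \cref{thm:exist}, stability from \cref{convexity} combined with \cref{thm:stability}, and exclusion of the remaining classes via the chains $L<M_d<M_*$ (resp.\ $L<M_i<M_*$) from \cref{thm:nonSM}. You are in fact somewhat more explicit than the paper's proof, which dispatches the second bullet (including the degenerate regime $p^\text{ext}>\beta$, $D\ge D_*$ where $M_i=M_*=+\infty$) with ``using a similar argument''; your careful case split there is a welcome addition but not a different method.
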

    \begin{remark}
    This corollary gives us a comprehensive view about long-time behavior of solutions of \cref{eqn:pb1} when the size $L$ of the domain is small. In this case, the unique steady-state solution $p$ is symmetric, monotone on each half of $\Omega$ and asymptotically stable. Its values will be close to $0$ if $p^\text{ext}$ is small and close to $1$ if $p^\text{ext}$ is large. We discuss an essential application of this result in \cref{sec:bio}. 
    \end{remark}
\section{Proof of the theorems}
   \label{sec:proof}
   \subsection{Proof of existence}
   In this section, we use phase-plane analysis to prove the existence of both (SM) and non-(SM) steady-state solutions depending on the parameters. The studies of (SD) and (SI) solutions will be presented respectively in \cref{sec:SD} and \cref{sec:SI}. Then, using these results, we prove \cref{thm:exist}. The proof of \cref{thm:nonSM} will be presented after that using the same technique. 
   
   First, we introduce the following function
		\begin{equation}
			E(p,p') = \dfrac{(p')^2}{2} + F(p).
			\label{eqn:energy}
		\end{equation}
		Since $\dfrac{d}{dx} E(p,p') = p'(p'' + f(p)) = 0$, then $E(p,p')$ is constant along the orbit of \cref{eqn:pb2}. From \cref{extreme}(c), we can deduce that there exists an $x_0 \in (-L,L)$ such that $p'(x_0) = 0$, thus one has 
		\begin{equation}
			E(p(x_0),0) = E(p(x),p'(x)),
		\end{equation}
		for any $x \in (-L,L)$. Therefore, the relation between $p'$ and $p$ is as below
		\begin{equation}
			p' = \pm \sqrt{2F(p(x_0)) - 2F(p)}.
			\label{eqn:phase}
		\end{equation}
		
    \begin{figure}
			\centering
			\begin{subfigure}{0.45\textwidth}
				\centering
				\includegraphics[width = \textwidth]{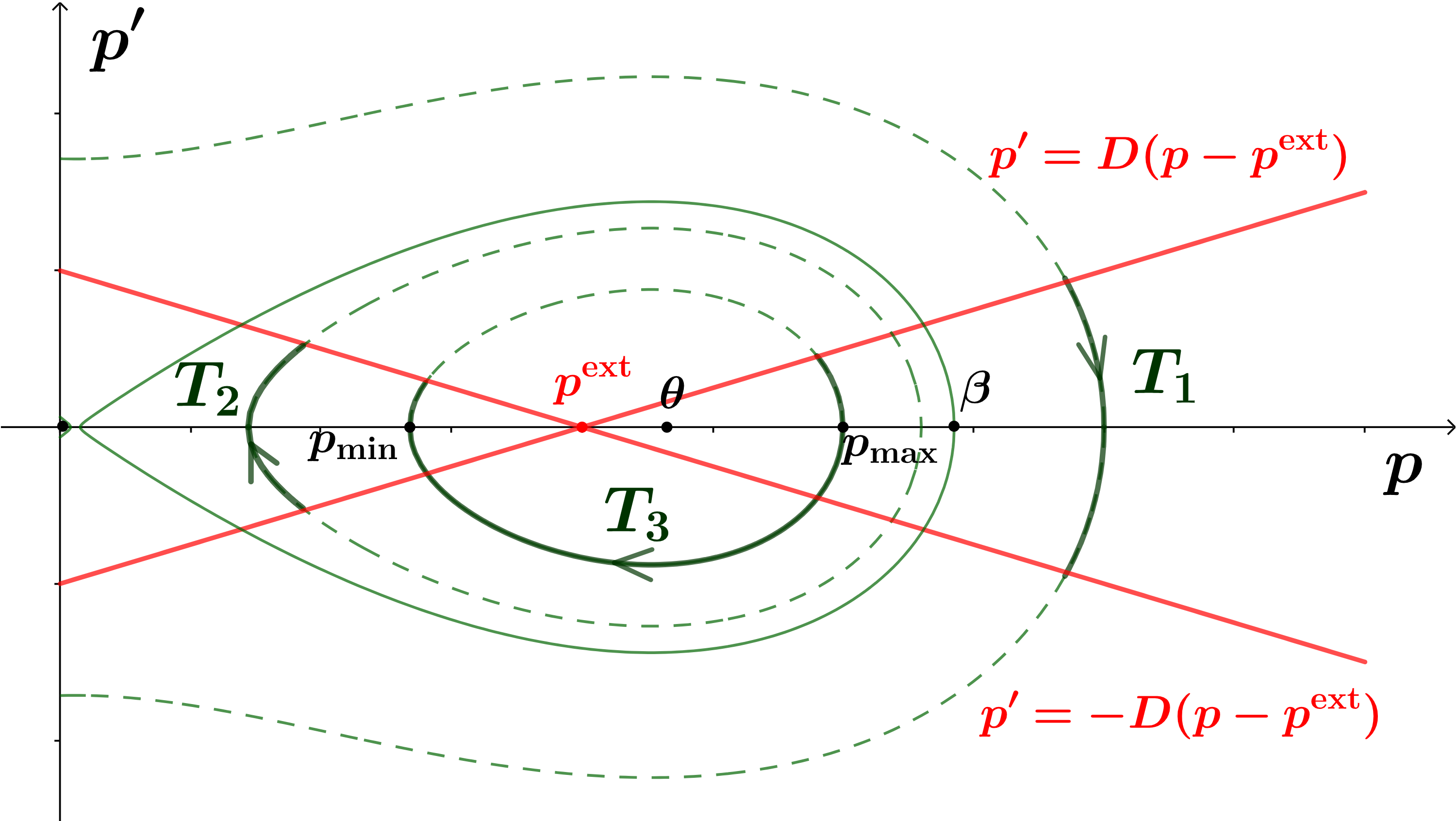} 
				\setlength{\abovecaptionskip}{0pt}
			    \setlength{\belowcaptionskip}{0pt}
				\caption{$p^\text{ext} < \theta < \beta, D > 0$}
				\label{fig:phase}
			\end{subfigure}
			\hfill
			\begin{subfigure}{0.45\textwidth}
				\centering
				\includegraphics[width = \textwidth]{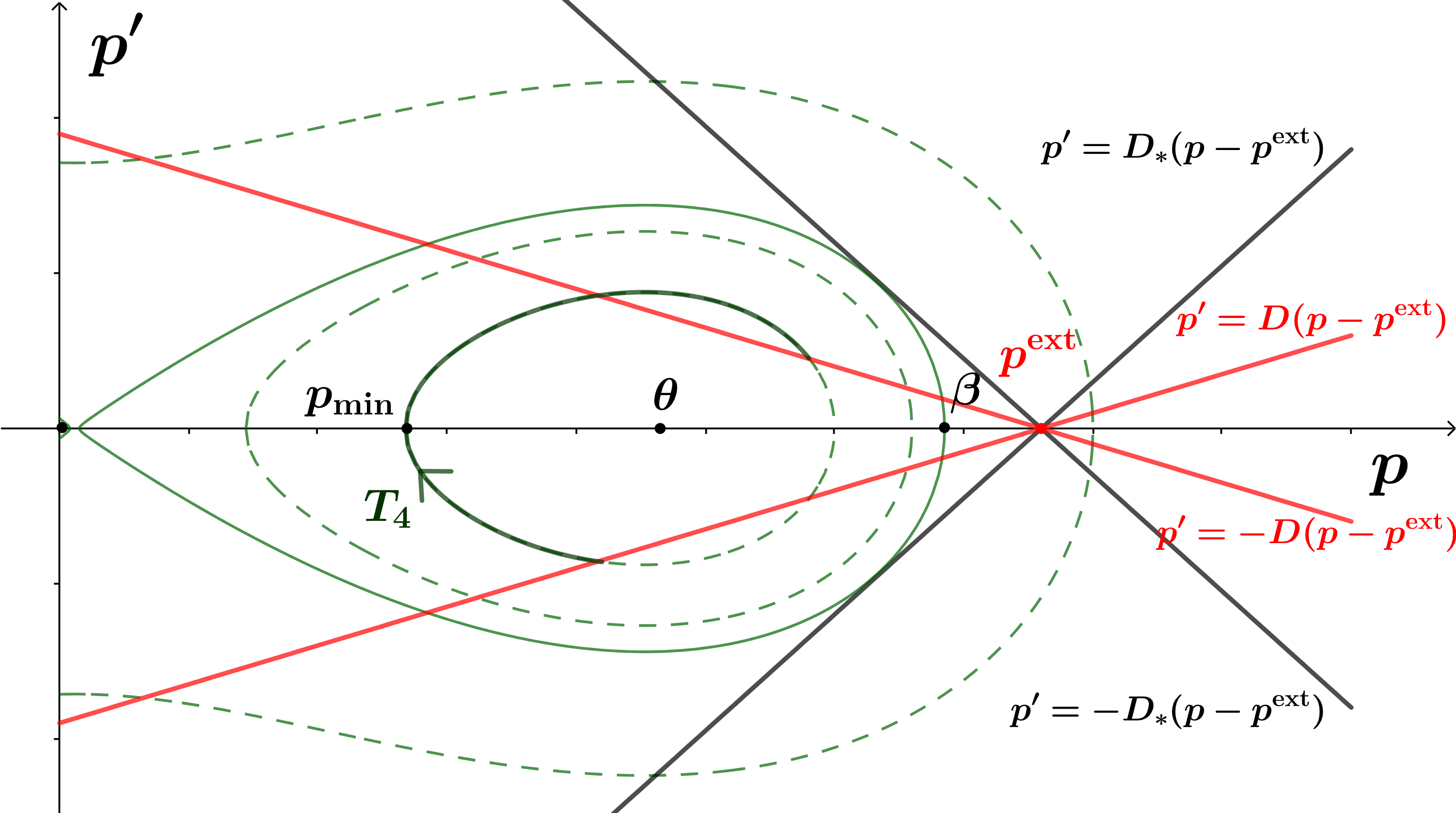} 
				\setlength{\abovecaptionskip}{0pt}
			    \setlength{\belowcaptionskip}{0pt}
				\caption{$p^\text{ext} > \beta, D \leq D_*$}
				\label{fig:phase2}
			\end{subfigure}
			\setlength{\abovecaptionskip}{5pt}
			\setlength{\belowcaptionskip}{0pt}
			\caption{Phase portrait of \cref{eqn:pb2}}
			\label{fig:phaseportrait}
	\end{figure} 

     According to this relation, one has a phase plane as in \cref{fig:phase}, in which the curves illustrate the relation between $p'(x)$ and $p(x)$ in \cref{eqn:phase} with respect to different values of $p(x_0)$. We can see that some curves do not end on the axis $p=0$ but wrap around the point $(\theta,0)$. This is dues to the fact that for any $p_1 \in [\theta,\beta]$, there exists a value $p_2 \in [0,\theta]$ such that $F(p_1) = F(p_2)$. Thus, if the curve passes through the point $(p_1,0)$, it will also pass through the point $(p_2,0)$ on the axis $p'=0$. Moreover, those curves only exist if their intersection with the axis $p'=0$ has $p$-coordinate less than or equal to $\beta$. Besides, the two straight lines show the relation between $p'$ and $p$ at the boundary points. Solutions of \cref{eqn:pb2} correspond to those orbits that connect the intersection of the curves with the line $p' = D(p-p^\text{ext})$ to the intersection of the curves with the line $p' = -D(p-p^\text{ext})$. 
		
    In the phase plane in \cref{fig:phase}, orbit $T_1$ describes a (SD) solution, while orbit $T_2$ corresponds to a (SI) solution. On the other hand,  the solid curve $T_3$ shows the orbit of a steady-state solution which is not symmetric-monotone.  
    \begin{remark}{\it (Graphical interpretation of $D_*$)} The (SI) solutions (see \cref{fig:p2}) have orbit as $T_2$ in \cref{fig:phase}. This type of orbits only exists when the lines $p = \pm D(p - p^\text{ext})$ intersect the curves wrapping around the point $(\theta,0)$. In the case when $p^\text{ext} > \beta$, the constant $D_* > 0$ in \cref{thm:exist} is the slope of the tangent line to the curve passing through $(\beta,0)$ as in \cref{fig:phase2}. Hence, if $D > D_*$, there exists no (SI) solution. We construct explicitly the value of $D_*$ in \cref{Type2} below.
    \end{remark}
   Next, we establish some relations between the solution $p$ and the parameters based on the phase portrait above. For any $x > x_0$, if $p$ is monotone on $(x_0,x)$, we can invert $x \mapsto p(x)$ into function $p \mapsto X(p)$. We obtain $X'(p) = \dfrac{\pm 1}{\sqrt{2F(p(x_0)) - 2F(p)}} $. By integrating this equation, we obtain that 
		\begin{equation}
				x - x_0 = \displaystyle \int_{p(x_0)}^{p(x)} \dfrac{ (-1)^k ds}{\sqrt{2F(p(x_0)) - 2F(s)}},
				\label{eqn:time}
		\end{equation}
	where $k = 1$ if $p$ is decreasing and $k = 2$ if $p$ is increasing on $(x_0,x)$. We can obtain the analogous formula for $x < x_0$.
	
	First, we focus on symmetric-monotone (SM) solutions for which $p'(0) = 0$, then we analyze the integral in \cref{eqn:time} with $x = L, x_0 = 0$. For any $p^\text{ext} \in (0,1)$, using \cref{eqn:phase}, we have
   \begin{equation}
   F(p(0)) = F(p(L)) + \dfrac{1}{2} D^2 \left( p(L) - p^\text{ext}\right)^2 = G(p(L)),
   \label{eqn:eq1}
   \end{equation}
   for $F$ defined in \cref{eqn:F} and 
   \begin{equation}
   G(q) := F(q) + \dfrac{1}{2} D^2 (q - p^\text{ext} )^2,
   \label{eqn:G}
   \end{equation}
   and from \cref{eqn:time} with $x=L, x_0 = 0$, we have
   \begin{equation}
   L = \displaystyle \int_{p(0)}^{p(L)} \dfrac{ (-1)^k ds}{\sqrt{2F(p(0)) - 2F(s)}},
   \label{eqn:eq2}
   \end{equation}
   where $k = 1$ if $p$ is decreasing on $(0,L)$, $k = 2$ if $p$ is increasing on $(0,L)$. 
   
   Thus, the (SM) solution of \cref{eqn:pb2} exists if there exist values $p(L)$ and $p(0)$ that satisfy \cref{eqn:eq1} and \cref{eqn:eq2}. When such values exist, we can assess the value of $p(x)$ for any $x$ in $(-L,L)$ using \cref{eqn:time}.
   
   Before proving existence of such values of $p(0)$ and $p(L)$, we establish some useful properties of the function $G$ defined in \cref{eqn:G}. It is continuous in $[0,1]$ and $G(q)\geq F(q) $ for all $q \in [0,1]$. Moreoever, the following lemma shows that $G$ has a unique minimum point.
   
   \begin{lemma}
   	\label{G}
   	For any $p^\text{ext} \in (0,1)$, there exists a unique value $\overline{q} \in (0,1)$ such that $G'(\overline{q}) = 0$, $G'(q) < 0$ for all $q \in [0,\overline{q})$ and $G'(q) > 0$ for all $q \in (\overline{q},1]$. Particularly, $G(\overline{q}) = \displaystyle \min_{[0,1]} G$.
   \end{lemma}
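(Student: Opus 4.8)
The plan is to reduce everything to the study of the first derivative $G'(q)=f(q)+D^2(q-p^\text{ext})$ (using $F'=f$) and to show it changes sign exactly once, from negative to positive; the claim then follows since a unique sign change of $G'$ from $-$ to $+$ produces a unique interior global minimizer. First I would record the boundary values: since $f(0)=f(1)=0$ by \cref{reaction}, one gets $G'(0)=-D^2 p^\text{ext}<0$ and $G'(1)=D^2(1-p^\text{ext})>0$, because $p^\text{ext}\in(0,1)$ and $D>0$. As $f\in\mathcal{C}^1$, the function $G'$ is continuous, so the intermediate value theorem already yields at least one zero $\overline q\in(0,1)$, and shows the minimum of $G$ cannot sit at an endpoint. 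What remains is uniqueness together with the strict sign pattern.

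The second step exploits \cref{convexity} through $G''(q)=f'(q)+D^2$. On $[\alpha_1,\alpha_2]$ we have $f'\ge 0$, hence $G''\ge D^2>0$ and $G$ is strictly convex there. The only places where $G''$ can be negative are the two wings $[0,\alpha_1)$ and $(\alpha_2,1]$, where $f'<0$. On $[0,\alpha_1]$ convexity of $f$ makes $f'$ nondecreasing, rising from $f'(0)$ to $f'(\alpha_1)=0$; thus $G''$ is nondecreasing and can be negative only on an initial segment $[0,q_a)$, being $\ge 0$ on $[q_a,\alpha_1]$ for some $q_a\in[0,\alpha_1]$. Symmetrically, concavity of $f$ on $(\alpha_2,1)$ makes $f'$ nonincreasing, so $G''$ can be negative only on a terminal segment $(q_b,1]$, being $\ge 0$ on $[\alpha_2,q_b]$. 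Hence $G'$ is nonincreasing on $[0,q_a]$, nondecreasing on the interval $[q_a,q_b]\supseteq[\alpha_1,\alpha_2]$, and nonincreasing on $[q_b,1]$.

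Combining this decreasing–increasing–decreasing shape with the boundary signs settles existence of the sign change. Since $G'$ is nonincreasing on $[0,q_a]$ with $G'(0)<0$, it stays strictly negative on $[0,q_a]$; since $G'$ is nonincreasing on $[q_b,1]$ with $G'(1)>0$, it stays strictly positive on $[q_b,1]$. On the middle interval $G'$ is nondecreasing and continuous with $G'(q_a)<0<G'(q_b)$, so it vanishes at some $\overline q\in(q_a,q_b)$, is $<0$ to its left and $\ge 0$ to its right.

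The main obstacle is ruling out that $G'$ could vanish on a whole subinterval (which would spoil uniqueness of $\overline q$ and the strict positivity after it), since the convexity hypotheses are not assumed strict. I would handle this by a propagation argument: if $G'\equiv 0$ on some $(s,t)\subset[q_a,q_b]$, then $G''\equiv 0$, i.e. $f'\equiv -D^2<0$ there, so $(s,t)$ lies in a wing. Monotonicity of $f'$ on that wing (nondecreasing on the left, nonincreasing on the right) then sandwiches $f'$ and forces $f'\equiv -D^2$ all the way to $q_a$ (resp. to $q_b$); hence $G''\equiv0$ and $G'$ is constant on $[q_a,t]$ (resp. $[s,q_b]$), equal to $G'(q_a)<0$ (resp. $G'(q_b)>0$), contradicting $G'=0$ on $(s,t)$. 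Consequently $\overline q$ is the unique zero, $G'<0$ on $[0,\overline q)$ and $G'>0$ on $(\overline q,1]$, which yields $G(\overline q)=\min_{[0,1]}G$.
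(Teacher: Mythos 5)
Your proof is correct, but it takes a genuinely different route from the paper's. The paper argues locally at critical points: after checking the sign of $G'$ on $[0,p^\text{ext}]$ and on $[\theta,1]$ to get existence of a zero between $p^\text{ext}$ and $\theta$ (for $p^\text{ext}<\theta$, say), it shows that \emph{every} critical point $\overline q$ is a strict local minimum by substituting the relation $D^2=-f(\overline q)/(\overline q-p^\text{ext})$, which holds only at a critical point, into $G''(\overline q)=f'(\overline q)+D^2$ and invoking convexity of $f$ on $(0,\alpha_1)$ to compare $f'(\overline q)$ with the secant slope $\bigl(f(\overline q)-f(p^\text{ext})\bigr)/(\overline q-p^\text{ext})$; uniqueness then follows since two strict local minima of a $\mathcal C^1$ function would be separated by a third critical point. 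You instead work globally with the shape of $G'$: the endpoint signs $G'(0)=-D^2p^\text{ext}<0<D^2(1-p^\text{ext})=G'(1)$, the nonincreasing/nondecreasing/nonincreasing profile of $G'$ deduced from the monotonicity of $f'$ on the two wings, and a propagation argument to exclude a plateau of zeros. Both are complete. The paper's version buys the localization $\overline q\in(p^\text{ext},\theta)$ (resp.\ $(\theta,p^\text{ext})$) and the identity $D^2=f(\overline q)/(p^\text{ext}-\overline q)$, which are reused later in Case 3 of \cref{Type2}; your version never needs the critical-point identity, and your plateau-exclusion step makes explicit a degeneracy (non-strict convexity could a priori let $G'$ vanish on an interval) that the paper's strict second-order test disposes of silently.
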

   \begin{proof}
   	We have $
   	G'(q) = f(q) + D^2(q-p^\text{ext})
   	$. 
   	We consider the following cases. 
   	
   	{\bf Case 1}: When $p^\text{ext} = \theta$, we have $G'(p^\text{ext}) = G'(\theta) = f(\theta) = 0, G'(q) < 0 $ for all $q \in (0,\theta)$ and $G'(q) > 0$ for all $q \in (\theta,1)$. Thus $\overline{q} = \theta = p^\text{ext}$. 
   	
   	{\bf Case 2}: When $p^\text{ext} < \theta$, we have $G'(q)  < 0$ for all $q \in [0,p^\text{ext}]$ and $G'(q) > 0$ for all $q \in [\theta,1]$. So there exists at least one value $\overline{q} \in (p^\text{ext},\theta)$ such that $G'(\overline{q}) = 0$. 
   	
   	For any $\overline{q} \in (p^\text{ext},\theta)$ such that $G'(\overline{q}) = 0$, we have $f(\overline{q}) + D^2 (\overline{q} - p^\text{ext}) = 0$ so that $D^2 = -\dfrac{f(\overline{q})}{\overline{q}- p^\text{ext}}$. We can prove that $G''(\overline{q})$ is strictly positive. Indeed, from \cref{convexity} we have that $\alpha_1$ is the unique value in $(0,\theta)$ such that $f'(\alpha_1) = 0$, thus $f(\alpha_1) = \displaystyle \min_{[0,\theta]} f < 0$. 
   	
   	If $\alpha_1 \leq \overline{q} < \theta $ then $f'(\overline{q}) \geq 0$. One has $	G''(\overline{q}) = f'(\overline{q}) + D^2 > 0$.
   	
   	If $p^\text{ext} < \overline{q} < \alpha_1$, due to the fact that $f$ is convex in $(0,\alpha_1)$ one has $f'(\overline{q}) \geq  \dfrac{f(\overline{q}) - f(p^\text{ext})}{\overline{q} - p^\text{ext}}$. Since $f(p^\text{ext}) < 0$, one has	$G''(\overline{q}) = f'(\overline{q}) + D^2 = f'(\overline{q}) - \dfrac{f(\overline{q})}{\overline{q} - p^\text{ext}} > f'(\overline{q}) + \dfrac{f(p^\text{ext}) -f(\overline{q})}{\overline{q} - p^\text{ext}} \geq 0.$	
   	One can deduce that $\overline{q}$ is the unique value in $(0,1)$ such that $G'(\overline{q}) = 0$ and $G(\overline{q}) = \displaystyle \min_{[0,1]} G$, so it satisfies \cref{G}.
   	
   	{\bf Case 3}: When $p^\text{ext} > \theta$, the proof is analogous to case 2 but using the concavity of $f$ in $(\alpha_2,1)$. We obtain that there exists a unique value $\overline{q}$ in $(\theta,p^\text{ext})$ which satisfies \cref{G}.
   \end{proof}
   
   When $p^\text{ext} = \theta$, it is easy to check that $p\equiv \theta$ is a solution of \cref{eqn:pb2}. We now analyze two types of (SM) solutions (see  \cref{fig:functionp}) in the following parts.
   \subsubsection{Existence of (SD) solutions}
   \label{sec:SD}
   In this part, the solution $p$ we study is symmetric on $(-L,L)$ and decreasing on $(0,L)$ (see \cref{fig:p1}). So $ p(L) < p(x) < p(0)$ for any $x \in (0,L)$. But from \cref{eqn:phase}, we have that $F(p(x)) \leq  F(p(0))$, so $F'(p(0)) \geq 0$. It implies that $p(0) \in [\theta,1]$. Next, we use two steps to study existence of (SD) solutions:
   
   \noindent {\bf Step 1: Rewriting as a non-linear equation on $p(L)$}
   
   For any $q \in (\theta,1)$, we have $F'(q) = f(q) > 0$ so $F|_{(\theta,1)}: (\theta, 1) \longrightarrow \left(F(\theta), F(1)\right)$ is invertible. Define $F_1^{-1} := (F|_{(\theta,1)})^{-1}: \left( F(\theta), F(1)\right) \longrightarrow (\theta,1)$, and $F_1^{-1}(F(\theta)) = \theta, F^{-1}_1(F(1)) = 1$. Then, $F^{-1}_1$ is continuous in $[F(\theta),F(1)]$. For any $y \in \left( F(\theta),F(1)\right)$, one has $\left( F^{-1}_1\right)'(y) = \dfrac{1}{F'\left( F^{-1}_1(y)\right)} = \dfrac{1}{f\left( F^{-1}_1(y)\right)} > 0 $, so $F^{-1}_1$ is an increasing function in $\left( F(\theta), F(1)\right)$. From \cref{eqn:eq1} and \cref{eqn:eq2}, since $p$ is decreasing in $(0,L)$, we have $L = \displaystyle \int_{p(L)}^{p(0)} \dfrac{ ds}{\sqrt{2G(p(L)) - 2F(s)}}$. Denote 
   \begin{equation}
   \mathcal{F}_1(q) := \displaystyle \int_{q}^{F_1^{-1}(G(q))}\dfrac{ds}{\sqrt{2G(q) - 2F(s)}}.
   \label{eqn:F1}
   \end{equation}
   Hence, a (SD) solution $p$ of system \cref{eqn:pb2} has $p(0) = F^{-1}_1(G(p(L)))$,	and $p(L)$ satisfies 
   \begin{equation}
   L = \mathcal{F}_1(p(L)).
   \label{eqn:sys1}
   \end{equation}	
   Moreover, one has $p'(x) \leq 0$ for all $x \in (0,L)$ thus $-D(p(L) - p^\text{ext}) = p'(L)  \leq 0$. One can deduce that 
   \begin{equation}
   p(L) \geq p^\text{ext}.
   \end{equation}
   
   \noindent {\bf Step 2: Solving \cref{eqn:sys1} in $[p^\text{ext},1]$}
   
   The existence of value $p(L)$ of the (SD) solutions is established as follows 
   \begin{proposition}
   	\label{Type1}
   	For any $D > 0, p^\text{ext} \in (0,1)$, we have 
   	\begin{enumerate}[leftmargin=1\parindent]
   		\item [1. ] If $0 < p^\text{ext} < \theta$, then there exists a constant $M_1 > 0$ such that equation \cref{eqn:sys1} has at least one solution $p(L) \geq p^\text{ext}$ if and only if $L \geq M_1$. 
   		\item [2. ] If $ \theta \leq p^\text{ext} < 1$, then equation \cref{eqn:sys1} admits at least one solution $p(L) \geq p^\text{ext}$ for all $L > 0$. If $p^\text{ext} \geq \alpha_2$, then this solution is unique. 
   	\end{enumerate} 
   \end{proposition}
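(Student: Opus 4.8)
The plan is to treat \eqref{eqn:sys1} as the problem of inverting the time map $\mathcal{F}_1$ from \eqref{eqn:F1}, regarded as a function of the single unknown $q=p(L)$ on its natural domain, and then to read off existence and uniqueness from the range and the monotonicity of $\mathcal{F}_1$. First I would pin down that domain. A (SD) orbit has $p(0)=F_1^{-1}(G(q))\in(\theta,1]$, so $\mathcal{F}_1(q)$ makes sense exactly when $G(q)\le F(1)$ and $F_1^{-1}(G(q))>q$. Using \cref{G} (the function $G$ decreases then increases, with unique minimum $\overline{q}$) together with $G(1)=F(1)+\tfrac12 D^2(1-p^\text{ext})^2>F(1)$, there is a unique $q_{\max}\in(\overline{q},1)$ with $G(q_{\max})=F(1)$, i.e. $p(0)=1$; the admissible set of $q$ is then $[p^\text{ext},q_{\max})$. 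On it I would check, using $G(q)\ge F(q)$ from \eqref{eqn:G} and the monotonicity of $F$ on each side of $\theta$, that the integrand in \eqref{eqn:F1} is real and positive on the open interval $(q,F_1^{-1}(G(q)))$, that $\mathcal{F}_1$ is a convergent improper integral, and that it depends continuously on $q$.

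The heart of the argument is the boundary behaviour of $\mathcal{F}_1$. At the right endpoint the upper limit $p(0)\to 1$, where $f(1)=0$ and $f'(1)<0$ by \cref{convexity}, so $2G(q)-2F(s)\sim |f'(1)|(1-s)^2$ near $s=1$ and the integral diverges logarithmically; hence $\mathcal{F}_1(q)\to +\infty$ as $q\to q_{\max}^-$. At the left endpoint the behaviour splits. For $0<p^\text{ext}<\theta$ one has $f(p^\text{ext})<0\neq f(p(0))$, so both endpoint singularities are integrable square-root singularities and $\mathcal{F}_1(p^\text{ext})$ is finite. For $\theta<p^\text{ext}<1$ the two limits of integration coalesce, $p(0)\to p^\text{ext}$ as $q\to p^\text{ext,+}$; using the substitution $u=\sqrt{G(q)-F(s)}$ (legitimate here because the orbit stays in $(\theta,1)$, where $f>0$ and $u$ is monotone in $s$) one gets $\mathcal{F}_1(q)=\sqrt2\int_0^{u_0(q)}\frac{du}{f(s(u,q))}$ with $u_0(q)=\tfrac{D}{\sqrt2}(q-p^\text{ext})\to 0$ and $f$ bounded below, whence $\mathcal{F}_1(q)\to 0$.

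With these limits the intermediate value theorem finishes the existence part. In the first case ($p^\text{ext}<\theta$) the continuous function $\mathcal{F}_1$ on $[p^\text{ext},q_{\max})$ attains a minimum $M_1:=\min_{[p^\text{ext},q_{\max})}\mathcal{F}_1\in(0,+\infty)$ and, since it blows up at $q_{\max}$, its range is exactly $[M_1,+\infty)$, so \eqref{eqn:sys1} has a solution $p(L)\ge p^\text{ext}$ iff $L\ge M_1$. In the second case ($\theta<p^\text{ext}<1$) the range is $(0,+\infty)$, giving a solution for every $L>0$; the borderline $p^\text{ext}=\theta$ is covered directly by the constant solution $p\equiv\theta$, which realizes $q=\theta=p^\text{ext}$ for all $L$. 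For uniqueness when $p^\text{ext}\ge\alpha_2$, I would show $\mathcal{F}_1$ is strictly increasing on $(p^\text{ext},q_{\max})$. There the whole orbit lies in $[\alpha_2,1]$, where $f>0$ and $f'\le 0$ by \cref{convexity}. Differentiating the regularized form $\mathcal{F}_1(q)=\sqrt2\int_0^{u_0(q)}\frac{du}{f(s(u,q))}$ (which removes the singularity at the upper limit and so justifies differentiation under the integral sign), and using $\partial_q s=G'(q)/f(s)$ together with $s(u_0,q)=q$, I expect to obtain
\[
\mathcal{F}_1'(q)=\frac{D}{f(q)}-\sqrt2\,G'(q)\int_0^{u_0(q)}\frac{f'(s)}{f(s)^3}\,du,
\]
where $f(q)>0$, $G'(q)=f(q)+D^2(q-p^\text{ext})>0$, and $f'(s)\le 0$. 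Both terms are then nonnegative and the first is strictly positive, so $\mathcal{F}_1'(q)>0$ and the solution of \eqref{eqn:sys1} is unique.

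I expect the main obstacle to be the careful treatment of the singular improper integrals — proving convergence, continuity in $q$ up to the endpoint $p^\text{ext}$ in the first case, and rigorously extracting the $+\infty$ and $0$ limits — rather than the monotonicity, which the $u$-substitution renders transparent. A secondary technical point is justifying the differentiation under the integral sign, for which the $u$-variable form is essential, since the original integrand is singular at $s=p(0)$.
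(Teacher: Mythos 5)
Your proposal is correct, and for the existence part it follows essentially the paper's own route: restrict to the admissible interval $[p^\text{ext},p^*)$ (your $q_{\max}$ is the paper's $p^*$, the unique point past the minimizer $\overline{q}$ of $G$ where $G=F(1)$), observe that $\mathcal{F}_1$ is continuous there and blows up at the right endpoint because $f(1)=0$ produces a non-integrable $1/(1-s)$ singularity, distinguish the two cases by the behaviour of $\mathcal{F}_1$ at $q=p^\text{ext}$ (a strictly positive minimum $M_1$ when $p^\text{ext}<\theta$, value/limit $0$ when $p^\text{ext}\ge\theta$), and conclude by the intermediate value theorem. Where you genuinely depart from the paper is the uniqueness step for $p^\text{ext}\ge\alpha_2$. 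The paper reparametrizes the time map affinely via $s=t\gamma(q)+(1-t)q$, differentiates under the integral, and proves positivity of the resulting integrand through the auxiliary function $\psi(y)=F(y)-\tfrac12 f(y)(y-q)$ together with $\gamma'(q)>1$ and the monotonicity of $f$ on $(\alpha_2,1)$. You instead use the energy substitution $u=\sqrt{G(q)-F(s)}$, which desingularizes the upper endpoint and yields the closed form
\[
\mathcal{F}_1'(q)=\frac{D}{f(q)}-\sqrt{2}\,G'(q)\int_0^{u_0(q)}\frac{f'(s)}{f(s)^3}\,du,
\]
from which $\mathcal{F}_1'>0$ is immediate since $f>0$, $G'>0$ and $f'\le 0$ along the orbit. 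I verified this formula (using $s(u_0(q),q)=q$ and $\partial_q s=G'(q)/f(s)$) and the sign analysis; the argument is correct and arguably more transparent than the paper's, while resting on the same ingredients ($G'>0$ from \cref{G}, $f'\le 0$ from \cref{convexity}). Two minor points. In Case 1 you should say explicitly that $M_1>0$ because $F_1^{-1}(G(q))>q$ for every admissible $q$, so each integral is over a nonempty interval; this pointwise positivity is exactly what the paper proves. And at the borderline $p^\text{ext}=\theta$ your limit argument genuinely fails (the orbit collapses onto $\theta$ where $f$ vanishes, and $\lim_{q\to\theta^+}\mathcal{F}_1(q)$ is in fact a positive constant, not $0$); invoking the constant solution $p\equiv\theta$ recovers the steady state but not literally a root of \cref{eqn:sys1} for $L>0$. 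The paper's proof glosses over the same point, so this is an inherited imprecision rather than a gap of yours.
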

   
   \begin{proof}
   	Since $F_1^{-1}$ is only defined in $[F(\theta),F(1)]$, we need to find $p(L) \in [p^\text{ext},1]$ such that $G(p(L)) \in [F(\theta),F(1)]$. 
   	
   	For all $q \in (0,1)$, we have $G(q) \geq F(q) \geq F(\theta)$ and from \cref{G}, there exists a value $\overline{q} \in (0,1)$ such that $\displaystyle \min_{[0,1]} G = G(\overline{q}) \leq G(p^\text{ext}) = F(p^\text{ext}) < \max_{[0,1]} F = F(1)$. Moreover, one has $G(1) > F(1)$, thus there exists a value $p^* \in (p^\text{ext},1)$ such that $G(p^*) = F(1)$. Then, for all $q \in [p^\text{ext},p^*], G(q) \in [F(\theta),F(1)]$ and we will find $p(L)$ in $[p^\text{ext},p^*]$. Since $F^{-1}_1$ increases in $(F(\theta),F(1))$, then $p(0) = F^{-1}_1(G(p(L))) \geq F^{-1}_1(F(p(L))) \geq p(L). $
   	
   	Function $\mathcal{F}_1$ in \cref{eqn:F1} is well-defined and continuous in $[p^\text{ext},p^*)$, $\mathcal{F} \geq 0$ in $[p^\text{ext},p^*)$. Moreover, since $F'(1) = 0$, one has $\displaystyle \lim_{p \rightarrow p^*}\mathcal{F}_1(p) =  \displaystyle \int_{p^*}^{1} \dfrac{ds}{\sqrt{2F(1) - 2F(s)}} = +\infty$.
   	
   	{\bf Case 1}: If $0 < p^\text{ext} < \theta$, we will prove that $\mathcal{F}_1$ is strictly positive in $[p^\text{ext},p^*)$. Indeed, for any $y \in [0,1]$, if $y < \theta$, by the definition of $F^{-1}_1$, we have $F^{-1}_1(G(y)) \in [\theta,1]$ so $F^{-1}_1(G(y)) > y$. If $y \geq \theta > p^\text{ext}$ then $G(y) = F(y) + \dfrac{1}{2} D^2(y - p^\text{ext})^2 > F(y)$ so again $F^{-1}_1(G(y)) > y$. Hence $\mathcal{F}_1(y) > 0$ for all $y \in [p^\text{ext},p^*)$. We have $\mathcal{F}_1(p) \rightarrow +\infty$ when $p \rightarrow p^*$, so there exists $p \in [p^\text{ext},p^*)$ such that $M_1:= \mathcal{F}_1(p) = \displaystyle \min_{[p^\text{ext},p^*]} \mathcal{F}_1 > 0$, and system \cref{eqn:sys1} admits at least one solution if and only if $L \geq M_1$. 
   	
   	{\bf Case 2}: If $\theta \leq p^\text{ext} < 1$, one has $G(p^\text{ext}) = F(p^\text{ext})$, then $F^{-1}_1(G(p^\text{ext})) = p^\text{ext}$ so 
   	$\mathcal{F}_1 (p^\text{ext}) = 0$. On the other hand, $\mathcal{F}_1(p) \rightarrow +\infty$ when $p \rightarrow p^*$. Thus, for any $L > 0$, there always exists at least one value $p(L) \in (p^\text{ext},p^*)$ such that $\mathcal{F}_1(p(L)) = L$. 
   	
   	Moreover, when $p^\text{ext} \geq \alpha_2$, we can prove that $\mathcal{F}_1' > 0$ on $(p^\text{ext},p^*)$. Indeed, denoting $\gamma(q) = F_1^{-1}(G(q))$, and changing the variable from $s$ to $t$ such that $s = t\gamma(q) + (1-t)q$, one has 
   	\begin{displaymath}
   	    \mathcal{F}_1(q) = \displaystyle \int_{0}^{1} \dfrac{[\gamma(q) - q]dt}{\sqrt{2F(\gamma(q)) - 2F(t\gamma(q) + (1-t)q)}}.
   	\end{displaymath}
   	To simplify, denote $s(q)= t\gamma(q) + (1-t)q$. For any $t \in (0,1)$, one has $q < s(q) < \gamma(q)$. Let us define $\Delta F = F(\gamma(q)) - F(s(q))$, then one has 
   	
   	\noindent $ \sqrt{2}\mathcal{F}_1'(q) = \displaystyle \int_0^1 (\gamma'(q)-1) (\Delta F)^{-1/2} dt - \dfrac{1}{2} \int_0^1 (\Delta F)^{-3/2}(\gamma(q)-q)\dfrac{d\Delta F}{dq} dt $
   	
   	$ = \displaystyle \int_0^1 (\Delta F)^{-3/2} \left[ (\gamma'(q)-1) \Delta F - \dfrac{1}{2} (\gamma(q)-q) (f(\gamma(q)) \gamma'(q) - f(s(q))s'(q)) \right]  $. 
   	
   	\noindent Let $P $ be the formula in the brackets, then
   	
   	\noindent $ \begin{array}{r l}
   	P &  =  (\gamma'-1) \Delta F - \dfrac{1}{2} (\gamma-q) \left[f(\gamma) \gamma' - f(s)(t\gamma' + 1 - t)\right]\\
   	& = (\gamma'-1) \left[\Delta F -\dfrac{1}{2}(\gamma-q)f(\gamma) + \dfrac{1}{2}(s-q)f(s)\right] - \dfrac{1}{2}(\gamma - q)(f(\gamma) - f(s)),
   	\end{array}
   	$
   	
   	Define $\psi(y) := F(y) - \dfrac{1}{2}f(y)(y - q)$ for any $y \in [q,\gamma(q)]$, then one has $\psi'(y) = \dfrac{1}{2}[f(y) - f'(y)(y-q)] \geq \dfrac{f(q)}{2}> 0$ since $y \geq q > p^\text{ext} \geq \alpha_2$ and $f$ is concave in $(\alpha_2,1)$, $f(q) > 0$. Moreover, $f$ is decreasing on $(\alpha_2,1)$ so $0 < f(\gamma(q)) < f(s(q)) < f(q)$, and $\gamma'(q) = \dfrac{G'(q)}{f(F_1^{-1}(G(q)))} = \dfrac{f(q) + D^2 (q - p^\text{ext})}{f(\gamma(q))} > 1$. Hence, we can deduce that $P = (\gamma'-1)(\psi(\gamma) - \psi(s)) - \dfrac{1}{2}(\gamma - q)(f(\gamma) - f(s)) > 0$ for any $t \in (0,1)$.  This proves that function $\mathcal{F}_1$ is increasing on $(p^\text{ext},p^*)$, so the solution of equation \cref{eqn:sys1} is unique. 
   \end{proof}
   \subsubsection{Existence of (SI) solutions} 
   \label{sec:SI}
   In this case, the technique we use to prove existence of (SI) solutions is analogous to (SD) solutions except the case when $p^\text{ext} > \beta$ (case 3 below). Since the proof is not straight forward, it is worth to re-establish this technique for (SI) solutions in two following steps:
   
   \noindent {\bf Step 1: Rewriting as a non-linear equation on $p(L)$}
   
   Since now $p$ is symmetric on $(-L,L)$ and increasing in $(0,L)$ (see \cref{fig:p2}), then $ p(0) < p(x) < p(L)$ for any $x \in (0,L)$. But from \cref{eqn:phase}, we have that $F(p(x)) \leq  F(p(0))$, so $F'(p(0)) \leq 0$. This implies that $p(0) \in [0,\theta]$.
   
   For any $q \in (0,\theta)$, we have $F'(q) = f(q) < 0$ so $F|_{(0,\theta)}: (0,\theta) \longrightarrow \left(F(\theta), F(0)\right)$ is invertible. Define $F_2^{-1} := (F|_{(0,\theta)})^{-1}: \left( F(\theta), F(0)\right) \longrightarrow (0,\theta)$, $F_2^{-1}(F(\theta)) = \theta, F^{-1}_2(F(0)) = 0$, and $F^{-1}_2$ is continuous in $[F(\theta),F(0)]$. For any $y \in \left( F(\theta), F(0)\right)$, $\left( F^{-1}_2\right)'(y) = \dfrac{1}{F'\left( F^{-1}_2(y)\right)} = \dfrac{1}{f\left( F^{-1}_2(y)\right)} < 0$, so $F^{-1}_2$ is a decreasing function in $\left( F(\theta), F(0)\right)$. From \cref{eqn:eq1} and \cref{eqn:eq2}, we have $L = \displaystyle \int_{p(0)}^{p(L)} \dfrac{ds}{\sqrt{2G(p(L)) - 2F(s)}}$. Denote
   \begin{equation}
   \mathcal{F}_2(q) := \displaystyle \int_{F^{-1}_2(G(q))}^{q} \dfrac{ds}{\sqrt{2G(q) - 2F(s)}}.
   \label{eqn:F2}
   \end{equation}
   Hence, a (SI) solution of system \cref{eqn:pb2} has $p(0) = F^{-1}_2(G(p(L)))$, and $p(L)$ satisfies 
   \begin{equation}
   L = \mathcal{F}_2(p(L)),
   \label{eqn:sys2}
   \end{equation}
   and in this case, one needs to find $p(L)$ in $[0,p^\text{ext}]$. 
   
   \noindent {\bf Step 2: Solving of \cref{eqn:sys2} in $[0,p^\text{ext}]$}
   \begin{proposition}
   	\label{Type2}
   	For any $p^\text{ext} \in (0,1)$, considering the value $\beta$ as in \cref{eqn:beta}, we have:  
   	\begin{enumerate}[leftmargin=1\parindent]
   		\item [1. ] If $ 0 < p^\text{ext} \leq \theta$, then equation \cref{eqn:sys2} admits at least one solution $p$ with $p(L) \leq p^\text{ext}$ for all $L > 0, D > 0$. If $p^\text{ext} \leq \alpha_1$, this solution is unique.
   		\item [2. ] If $\theta < p^\text{ext} \leq  \beta$, then for all $D > 0$, there exists a constant $M_2 > 0$ such that equation \cref{eqn:sys2} has at least one solution $p$ with $p(L) \leq p^\text{ext}$ if and only if $L \geq M_2$. 
   		\item [3. ] If $\beta < p^\text{ext} < 1$, then there exists a constant $D_* > 0$ such that when $D \geq D_*$, equation \cref{eqn:sys2} has no solution. Otherwise, there exists a constant $M_3 > 0$ such that equation \cref{eqn:sys2} has at least one solution $p$ with $p(L) \leq p^\text{ext}$ if and only if $L \geq M_3 $. 
   	\end{enumerate} 
   \end{proposition}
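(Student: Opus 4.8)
The plan is to study the time-map $\mathcal{F}_2$ from \eqref{eqn:F2} on its natural domain and to read off solvability of \eqref{eqn:sys2} from its boundary behaviour, exactly as was done for $\mathcal{F}_1$ in \cref{Type1}, but paying attention to the new phenomena caused by the position of $p^\text{ext}$ relative to $\beta$. Since $F_2^{-1}$ is only defined on $[F(\theta),F(0)]=[F(\theta),0]$ and the lower bound $G(q)\geq F(q)\geq F(\theta)$ always holds, the effective domain of $\mathcal{F}_2$ is
\[
\mathcal{D}:=\{\,q\in[0,p^\text{ext}]\ :\ G(q)\leq 0\,\}.
\]
Using \cref{G} ($G$ strictly decreasing then strictly increasing, with unique minimum at $\overline{q}$) together with $G(0)=\tfrac12 D^2(p^\text{ext})^2>0$ and $G(p^\text{ext})=F(p^\text{ext})$, the shape of $\mathcal{D}$ is governed entirely by the sign of $F(p^\text{ext})$; because $F$ is increasing on $(\theta,1)$ with $F(\beta)=0$, this sign is nonpositive precisely when $p^\text{ext}\leq\beta$ and positive when $p^\text{ext}>\beta$. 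This is the dichotomy separating cases 1--2 from case 3.

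The key technical fact I would isolate first is the behaviour of $\mathcal{F}_2$ at the endpoints of $\mathcal{D}$. At any endpoint $q$ with $G(q)=0$ one has $F_2^{-1}(G(q))=F_2^{-1}(0)=0$, so $\mathcal{F}_2(q)=\int_0^{q}(-2F(s))^{-1/2}\,ds$. Since $f(0)=0$ and $f'(0)<0$ by \cref{convexity}, one has $F(s)\sim\tfrac12 f'(0)s^2$ as $s\to 0^+$, the integrand behaves like $\bigl(\sqrt{-f'(0)}\,s\bigr)^{-1}$, and the integral diverges; hence $\mathcal{F}_2(q)=+\infty$ at every endpoint where $G$ vanishes. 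By contrast, at the endpoint $q=p^\text{ext}$ (relevant only when $G(p^\text{ext})\leq 0$) the value is finite: if $p^\text{ext}\leq\theta$ then $F_2^{-1}(F(p^\text{ext}))=p^\text{ext}$ and $\mathcal{F}_2(p^\text{ext})=0$, while if $p^\text{ext}>\theta$ the integrand has only integrable square-root singularities at its two endpoints, so $\mathcal{F}_2(p^\text{ext})\in(0,+\infty)$. In the interior of $\mathcal{D}$, $\mathcal{F}_2$ is continuous and strictly positive.

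With these ingredients the first two cases follow from the intermediate value theorem. In case 1, $\overline{q}\geq p^\text{ext}$, so $G$ is decreasing on $[0,p^\text{ext}]$ and $\mathcal{D}=[q_*,p^\text{ext}]$ for a unique $q_*\in(0,p^\text{ext})$ with $G(q_*)=0$; then $\mathcal{F}_2(q_*)=+\infty$, $\mathcal{F}_2(p^\text{ext})=0$, and continuity yields a solution for every $L>0$. The uniqueness claim when $p^\text{ext}\leq\alpha_1$ I would obtain by showing $\mathcal{F}_2'<0$ on $(q_*,p^\text{ext})$, repeating the substitution $s=tq+(1-t)F_2^{-1}(G(q))$ and the sign estimate used for $\mathcal{F}_1$ in \cref{Type1}, now invoking the convexity of $f$ on $(0,\alpha_1)$ (one checks $p(0)=F_2^{-1}(G(q))\leq q=p(L)\leq p^\text{ext}\leq\alpha_1$, so the whole integration range stays in $(0,\alpha_1)$). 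In case 2, $G(p^\text{ext})=F(p^\text{ext})\leq 0$, so again $\mathcal{D}=[q_*,p^\text{ext}]$ with $G(q_*)=0$; now $\mathcal{F}_2(q_*)=+\infty$ and $\mathcal{F}_2(p^\text{ext})\in(0,+\infty)$ (the latter degenerating to $+\infty$ only in the limiting subcase $p^\text{ext}=\beta$), so $\mathcal{F}_2$ attains a positive minimum $M_2$ and its range is exactly $[M_2,+\infty)$, giving solvability iff $L\geq M_2$.

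The real work is case 3, where $G(p^\text{ext})=F(p^\text{ext})>0$, so $\mathcal{D}$ is nonempty iff $\min_{[0,1]}G\leq 0$. Because $G(q)\leq 0$ is equivalent to $D^2\leq -2F(q)/(q-p^\text{ext})^2$, I would define the threshold explicitly by
\[
D_*^2:=\max_{q\in(0,\beta)}\frac{-2F(q)}{(q-p^\text{ext})^2},
\]
the maximum being attained at an interior point and strictly positive (the quotient is positive on $(0,\beta)$ and vanishes at both ends); this $D_*$ is precisely the tangency slope of the line $p'=-D(p-p^\text{ext})$ to the orbit $p'=\sqrt{-2F(p)}$ through $(\beta,0)$, matching the graphical interpretation. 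For $D>D_*$ one has $\min G>0$, and for $D=D_*$ the set $\{G\leq 0\}$ collapses to the single tangency point where $\mathcal{F}_2=+\infty$; in both situations \eqref{eqn:sys2} has no solution. For $D<D_*$ the minimum of $G$ is strictly negative, so $\mathcal{D}=[q_-,q_+]\subset(0,p^\text{ext})$ with $G(q_-)=G(q_+)=0$; by the endpoint analysis $\mathcal{F}_2(q_-)=\mathcal{F}_2(q_+)=+\infty$, whence the continuous positive function $\mathcal{F}_2$ attains an interior minimum $M_3>0$ with range $[M_3,+\infty)$, and \eqref{eqn:sys2} is solvable iff $L\geq M_3$. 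I expect this third case to be the main obstacle: one must establish that the admissible set genuinely collapses as $D$ crosses $D_*$ (equivalently, that $D\mapsto\min_{[0,1]}G$ is continuous and strictly increasing from $F(\theta)<0$ towards $F(p^\text{ext})>0$, so the formula for $D_*$ is well posed), and then control $\mathcal{F}_2$ when it blows up at \emph{both} ends of $\mathcal{D}$ in order to conclude that its range is exactly $[M_3,+\infty)$. The only other delicate point is the monotonicity estimate underlying the uniqueness statement in case 1, which reproduces the most technical computation of \cref{Type1}.
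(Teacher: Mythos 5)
Your proposal is correct and follows the same global strategy as the paper: determine the effective domain of the time map $\mathcal{F}_2$ as the set where $G\leq F(0)$, show $\mathcal{F}_2$ blows up at any endpoint where $G$ vanishes (via the quadratic vanishing of $F$ at $0$) and vanishes or stays finite at $q=p^\text{ext}$, then conclude by the intermediate value theorem, with uniqueness in case 1 obtained by the same monotonicity computation as for $\mathcal{F}_1$ in \cref{Type1}. The one place where you genuinely depart from the paper is the construction of $D_*$ in case 3. The paper introduces the auxiliary function $H(q)=F(q)+\tfrac12 f(q)(p^\text{ext}-q)$, proves $H'\geq 0$ on $(\theta,p^\text{ext})$ using the concavity of $f$ on $(\alpha_2,1)$, locates its unique root $\overline{p}_*$, sets $D_*^2=f(\overline{p}_*)/(p^\text{ext}-\overline{p}_*)$, and then compares the minimizer $\overline{q}$ of $G$ with $\overline{p}_*$ through the monotonicity of $q\mapsto f(q)/(p^\text{ext}-q)$ to decide the sign of $\min G - F(0)$. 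You instead define $D_*^2=\max_{q\in(0,\beta)}\bigl(-2F(q)/(q-p^\text{ext})^2\bigr)$, which encodes directly the equivalence $\min_{[0,1]}G\leq 0 \Leftrightarrow D\leq D_*$ and needs only \cref{G} (not the concavity of $f$) to handle the three regimes $D<D_*$, $D=D_*$, $D>D_*$. The two constants coincide: setting the derivative of $q\mapsto -2F(q)/(q-p^\text{ext})^2$ to zero yields exactly $H(q)=0$, and at that point the quotient equals $f(q)/(p^\text{ext}-q)$. Your variational formulation is arguably more economical and makes the tangency interpretation of $D_*$ transparent, at the price of not exhibiting the uniqueness of the tangency point, which the paper gets for free from the monotonicity of $H$; neither is a gap, since only the value of the threshold matters for the statement.
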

   \begin{proof}
   	As we assume that $F(0) < F(1)$ and $F(\theta) < F(0)$ then, due to the continuity of $F$, one can deduce that there exists a value $\beta \in (\theta,1)$ such that $F(\beta) = F(0) = 0$.
   	
   	Since $F_2^{-1}$ is only defined in $[F(\theta),F(0)]$, we need to find $p(L) \in [0,p^\text{ext}]$ such that $G(p(L)) \in [F(\theta),F(0)]$. For all $q \in (0,1)$, we have $G(q) \geq F(q) \geq F(\theta)$, thus equation \cref{eqn:sys2} has solutions if and only if  $\displaystyle \min_{[0,1]} G < F(0)$. Even when $\displaystyle \min_{[0,1]} G = G(\overline{q}) = F(0)$, $\mathcal{F}_2$ is still not defined in $[0,1]$ since $\mathcal{F}_2 (\overline{q}) = +\infty$.
   	
   	One has the following cases:
   	
   	{\bf Case 1:} $0 < p^\text{ext} \leq \theta$: 
   	
   	We have $\displaystyle \min_{[0,1]} G = G(\overline{q}) \leq G(p^\text{ext}) = F(p^\text{ext}) < \max_{[0,\theta]}F =  F(0)$, and $G(0) > F(0)$ so there is a value $p_* \in (0,p^\text{ext})$ such that $G(p_*) = F(0)$. Moreover $F'(0) = 0$, then $\displaystyle \lim_{p \rightarrow p^*}\mathcal{F}_2(p) = +\infty$. Thus, function $\mathcal{F}_2$ is only well-defined and continuous in $(p_*,p^\text{ext}]$. 
   	
   	When $0 < p^\text{ext} \leq \theta$, $F^{-1}_2(G(p^\text{ext})) = F^{-1}_2(F(p^\text{ext})) = p^\text{ext}$ so $\mathcal{F}_2 (p^\text{ext}) = 0$. We can deduce that for any $L > 0$, there always exists at least one value $p(L) \in (p_*,p^\text{ext})$ such that $\mathcal{F}_2(p(L)) = L$. 
   	When $p^\text{ext} \leq \alpha_1$, arguing analogously to the second case of \cref{Type1}, one has $\mathcal{F}_2' < 0$ on $(p_*,p^\text{ext})$, thus the solution is unique. 
   	
   	{\bf Case 2:} $\theta < p^\text{ext} \leq \beta$: 
   	
   	Since $F$ increases on $(\theta,1)$, then $\displaystyle \min_{[0,1]} G = G(\overline{q}) < G(p^\text{ext}) = F(p^\text{ext}) \leq F(\beta) =  F(0)$. Analogously to the previous case, $\mathcal{F}_2$ is well-defined and continuous in $(p_*,p^\text{ext}] $, $\displaystyle \lim_{p \rightarrow p^*}\mathcal{F}_2(p) = +\infty$, and $\mathcal{F}_2$ is strictly positive in $(p_*,p^\text{ext}]$. Therefore, there exists $p \in (p_*,p^\text{ext}]$ such that 
   	\begin{equation}
   	M_2:= \mathcal{F}_2(p) = \min_{[p_*,p^\text{ext}]} \mathcal{F}_2 > 0,
   	\end{equation}
   	and system \cref{eqn:sys2} admits as least one solution if and only if $L \geq M_2$. 
   	
   	{\bf Case 3:} $\beta < p^\text{ext} < 1$: 
   	
   	Consider the function $H(q) = F(q) + \dfrac{1}{2} f(q)(p^\text{ext} - q)$ defined in an interval $[\theta,p^\text{ext}]$. For any $\theta < q < p^\text{ext}$, one can prove that $H'(q) \geq 0$. 
   	
   	Indeed, if $q \leq \alpha_2$, then $f'(q) \geq 0$, and $f(q) >0$. One has $H'(q) = \dfrac{1}{2} f(q) + \dfrac{1}{2}f'(q)(p^\text{ext} - q)  > 0$. If $q > \alpha_2$, from \cref{convexity}, the function $f$ is concave in $(\alpha_2,1)$, and hence $f'(q)(p^\text{ext} - q) \geq f(p^\text{ext}) - f(q)$. Thus,
   	
   	\begin{center}
   		$ H'(q) = \dfrac{1}{2} (p^\text{ext} - q) \left(f'(q) + \dfrac{f(q)}{p^\text{ext} - q}\right) > \dfrac{1}{2} (p^\text{ext} - q) \left(f'(q) + \dfrac{f(q) - f(p^\text{ext})}{p^\text{ext} - q}\right) \geq 0.$
   	\end{center}
   	
   	Therefore, function $H$ increases in $(\theta,p^\text{ext})$. Moreover $H(\theta) = F(\theta) < F(0)$ and $H(p^\text{ext}) = F(p^\text{ext}) > F(\beta) = F(0)$, and so there exists a unique value $\overline{p}_* \in (\theta,p^\text{ext})$ such that $H(\overline{p}_*) = F(0)$. Take $D_* > 0$ such that $D_*^2 = \dfrac{f(\overline{p}_*)}{p^\text{ext} - \overline{p}_*}$. Then, for any $D > 0$,  from \cref{G}, there is a unique value $\overline{q} \in (\theta,p^\text{ext})$ such that $G'(\overline{q}) = 0$, $G(\overline{q}) = \displaystyle \min_{[0,1]}G$, and $D^2 = 
   	\dfrac{f(\overline{q})}{p^\text{ext} - \overline{q}}$. If $D < D_*$, then $\dfrac{f(\overline{q})}{p^\text{ext} - \overline{q}} < \dfrac{f(\overline{p}_*)}{p^\text{ext} - \overline{p}_*}$. 
   	
   	 Let $h(q) = \dfrac{f(q)}{p^\text{ext} - q}$, then $h'(q) = \dfrac{1}{p^\text{ext} - q} \left( f'(q) + \dfrac{f(q)}{p^\text{ext} - q} \right) > 0$ for $q \in (\theta, p^\text{ext})$. So function $h$ is increasing in $(\theta, p^\text{ext})$, and we can deduce that $\overline{q} < \overline{p}_*$. Hence, $\displaystyle \min_{[0,1]}G = G(\overline{q}) = F(\overline{q}) + \dfrac{1}{2}D^2(p^\text{ext} - \overline{q})^2 = F(\overline{q}) + \dfrac{1}{2}f(\overline{q})(p^\text{ext}- \overline{q}) = H(\overline{q}) < H(\overline{p}_*) = F(0)$. 
   	
   	Moreover, $G(p^\text{ext}) = F(p^\text{ext}) > F(\beta ) = F(0)$, $G(0) > F(0)$. Thus, there exists a maximal interval $(q_*,q^*) \subset [0,p^\text{ext}]$ such that $G(q) \in (F(\theta),F(0))$ for all $q \in (q_*,q^*)$. We have $0 < q_* < \overline{q} < q^* < p^\text{ext}$ and $G(q_*) = G(q^*) = F(0)$. Therefore, $\mathcal{F}_2$ is well-defined and continuous in $(q_*,q^*)$, and $	\displaystyle \lim_{p \rightarrow q^*}\mathcal{F}_2(p) = \lim_{p \rightarrow q_*} \mathcal{F}_2(p) = +\infty$. Reasoning like in the previous case, \cref{eqn:sys2} admits solution if and only if $L \geq M_3$, where
   	\begin{equation}
   	M_3 := \displaystyle \min_{[q_*,q^*]} \mathcal{F}_2 > 0,
   	\end{equation}
   	On the other hand, if $D \geq D_*$, $\displaystyle \min_{[0,1]} G \geq F(0)$, and equation \cref{eqn:sys2} has no solution. 
   \end{proof}
   \begin{proof}[Proof of \cref{thm:exist}] 
       As we showed in \cref{sec:SD}, the (SD) steady-state solution $p$ of \cref{eqn:pb2} has $p(L)$ satisfying equation \cref{eqn:sys1}. From \cref{Type1}, we can deduce that for fixed $p^\text{ext} \in (0,1), D > 0$, $M_d(p^\text{ext}, D) = \displaystyle \min_q \mathcal{F}_1(q)$. Thus, we obtain the results for (SD) steady-state solutions of \cref{eqn:pb2} in \cref{thm:exist}. 
       
       Similarly, \cref{Type2} provides that for fixed $p^\text{ext} \in (0,1), D > 0$, we have $M_i(p^\text{ext}, D) = \displaystyle \min_q \mathcal{F}_2(q)$ when $p^\text{ext} \leq \beta$ or $D < D_*$. Otherwise, $M_i(p^\text{ext}, D) = +\infty$. 
   \end{proof}
   \subsubsection{Existence of non-(SM) solutions}   As we can see in the phase portrait in \cref{fig:phaseportrait}, there exist some solutions of \cref{eqn:pb2} which are neither (SD) nor (SI). These solutions can be non-symmetric or can have more than one (local) extremum. By studying these cases, we prove \cref{thm:nonSM} as follows
   \begin{proof}[Proof of \cref{thm:nonSM}]
       We can see from \cref{fig:phase} that for fixed $p^\text{ext} \leq \beta, D > 0$, the non-(SM) solutions $p$ of \cref{eqn:pb2} have more than one (local) extreme value because their orbits have at least two intersections with the axis $p' = 0$  (see e.g. $T_3$). Those solutions have the same local minimum values, denoted $p_\text{min}$, and the same maximum values, denoted $p_\text{max}$. Moreover, we have $p_\text{min} < \theta < p_\text{max}$, and $F(p_\text{min}) = F(p_\text{max})$. 
       
       Since the orbits make a round trip of distance $2L$, then the more extreme values a solution has, the larger $L$ is. Hence, to find the minimal value $M_*$, we study the case when $p$ has one local minimum and one local maximum with orbit as $T_3$ in \cref{fig:phase}. Then we have 
       \begin{center}
           $G(p(-L)) = G(p(L)) = F(p_\text{min}) = F(p_\text{max})$,
       \end{center}
       and by using \cref{eqn:time}, we obtain 
     \begin{center}
         $2L = \mathcal{F}_1((p(-L)) + \displaystyle \int_{p_\text{min}}^{p_\text{max}} \dfrac{ds}{\sqrt{2F(p_\text{min})- 2F(s)}} + \mathcal{F}_2(p(L))$.
     \end{center}
       Using the same idea as above, we can show that $L$ depends continuously on $p(-L)$. 
       
       Moreover, $\displaystyle \int_{p_\text{min}}^{p_\text{max}} \dfrac{ds}{\sqrt{2F(p_\text{min})- 2F(s)}} > \mathcal{F}_1(p(-L)) + \mathcal{F}_2(p(L))$, and $M_d = \min \mathcal{F}_1,$ $M_i = \min \mathcal{F}_2$, therefore there exists a constant $M_* > M_d + M_i$ such that \cref{eqn:pb2} admits at least one non-(SM) solution $p$ if and only if $L \geq M_*$. 
       
       On the other hand, for fixed $p^\text{ext} > \beta, D < D_*$, it is possible that \cref{eqn:pb2} admits a non-symmetric solution with only one minimum. The orbit of this solution is as $T_4$ in \cref{fig:phase2}. In this case, we have $G(p(L)) = G(p(-L)) = F(p_\text{min})$ with $p(-L) < p(L)$ and
       \begin{center}
            $2L = \mathcal{F}_2(p(-L)) + \mathcal{F}_2(p(L)) > 2M_i$.
       \end{center}
       Hence, in this case we only need $M_* > M_i$.  
   \end{proof}
   
   \subsection{Stability analysis}
   We first study the principal eigenvalue and eigenfunction for the linear problem. Then by using these eigenelements, we construct the super- and sub-solution of \cref{eqn:pb1} and prove the stability and instability corresponding to each case in \cref{thm:stability}.
   \begin{proof}[Proof of \cref{thm:stability}]
       Consider the corresponding linear eigenvalue problem: 
   \begin{equation}
        \begin{cases}
            \begin{array}{r l}
                -\phi''  & = \lambda \phi \qquad  \text{ in } (-L,L), \\
                \phi'(L) & = -D\phi(L)  ,\\
                \phi'(-L)& = D\phi(-L), 
            \end{array}
        \end{cases}
        \label{eqn:EVP}
   \end{equation}
   where $\lambda$ is an eigenvalue  with associated eigenfunction $\phi$. We can see that $\phi = \cos{\left(\sqrt{\lambda}x\right)}$ is an eigenfunction iff $\sqrt{\lambda}\tan{\left(L\sqrt{\lambda}\right)} = D$. Denote $\lambda_1$ the smallest positive value of $\lambda$ which satisfies this equality, thus $L\sqrt{\lambda_1} \in \left( 0, \dfrac{\pi}{2}\right)$. Hence, $\lambda_1 \in \left(0, \dfrac{\pi^2}{4L^2} \right)$. Moreover, for any $x \in (-L,L)$, the corresponding eigenfunction $\phi_1(x) = \cos{\left( \sqrt{\lambda_1}x \right)}$ takes values in $(0,1)$.  
   
   {\bf Proof of stability:}
   Now let $p$ be a steady-state solution of \cref{eqn:pb1} governed by \cref{eqn:pb2}. First, we prove that if $f'(p(x)) < \lambda_1$ for any $x \in (-L,L)$ then $p$ is asymptotically stable. Indeed, since $f'(p(x)) < \lambda_1$, there exist positive constants $\delta, \gamma$  with $\gamma < \lambda_1$ such that for any $\eta \in [0,\delta]$,
   \begin{equation}
       f(p+\eta) - f(p) \leq (\lambda_1 - \gamma) \eta, \qquad  f(p) - f(p-\eta) \leq (\lambda_1 - \gamma) \eta,
       \label{eqn:lambda1}
   \end{equation}
   on $(-L,L)$. Now consider 
   \begin{displaymath}
       \overline{p}(t,x) = p(x) + \delta e^{-\gamma t} \phi_1(x), \qquad \underline{p}(t,x) = p(x) - \delta e^{-\gamma t} \phi_1(x). 
   \end{displaymath}
   Assume that $p^\text{init}(x) \leq p(x) + \delta \phi_1(x)$. Then by \cref{eqn:lambda1}, we have that $\overline{p}$ is a super-solution of \cref{eqn:pb1} because
   \begin{displaymath}
       \partial_t\overline{p} - \partial_{xx}\overline{p} = (\lambda_1 - \gamma) \delta e^{-\gamma t} \phi_1(x) + f(p) \geq f(p + \delta e^{-\gamma t} \phi_1(x)) = f(\overline{p}),
   \end{displaymath}
   due to the fact that $ 0 < \delta e^{-\gamma t} \phi_1(x) < \delta $ for any $t > 0$, $x \in (-L,L)$. Moreover, at the boundary points one has $\frac{\partial \overline{p}}{\partial \nu} + D(\overline{p} - p^\text{ext}) = \frac{\partial p}{\partial \nu} + D(p - p^\text{ext}) = 0.$
   
   Similarly, if we have $p^\text{init}(x) \geq p(x) - \delta \phi_1(x)$, and so $\underline{p}$ is a sub-solution of \cref{eqn:pb1}. Then, by the method of super- and sub-solution (see e.g. \cite{PAO}), the solution $p^0$ of \cref{eqn:pb1} satisfies $\underline{p} \leq p^0 \leq \overline{p}$. Hence, $ |p^0(t,x) - p(x)| \leq \delta e^{-\gamma t}\phi_1(x)$. Therefore, we can conclude that, whenever $|p^\text{init}(x) - p(x)| \leq \delta \phi_1(x)$ for any $x \in (-L,L)$, the solution $p^0$ of \cref{eqn:pb1} converges to the steady-state $p$ when $t \rightarrow +\infty$. This shows the stability of $p$. 
    
    {\bf Proof of instability: } In the case when $f'(p(x)) > \lambda_1$, there exist positive constants $\delta, \gamma$, with $\gamma < \lambda_1$, such that for any $\eta \in [0,\delta]$,
    \begin{equation}
         f(p+\eta) - f(p) \geq (\lambda_1 + \gamma) \eta,
         \label{eqn:lambda2}
    \end{equation}
    on $(-L,L)$. 
    
    For any $p^\text{init} > p$, there exists a positive constant $\sigma < 1$ such that $p^\text{init} \geq p + \delta (1- \sigma)$. Then $\widetilde{p}(t,x) = p(x) + \delta (1 - \sigma e^{-\gamma' t}) \phi_1(x)$, with $\gamma' < \gamma$ small enough, is a sub-solution of \cref{eqn:pb1}. Indeed, by applying \cref{eqn:lambda2} with $\eta =  \delta (1 - \sigma e^{-\gamma' t}) \phi_1(x) \in [0,\delta]$ for any $x \in (-L,L)$, we have 
    \begin{displaymath}
         \partial_t \widetilde{p} - \partial_{xx} \widetilde{p} = \gamma' \delta \sigma e^{-\gamma' t} \phi_1(x) + \lambda_1 \delta (1 - \sigma e^{-\gamma' t}) \phi_1(x) + f(p) \leq f(p + \delta (1 - \sigma e^{-\gamma' t}) \phi_1(x))
    \end{displaymath}
    if $\gamma \geq \dfrac{\gamma'\sigma e^{-\gamma' t}}{1 - \sigma e^{-\gamma' t}} = \dfrac{\gamma' \sigma}{e^{\gamma' t} - \sigma} $ for any $t \geq 0$. This inequality holds when we choose $\gamma' \leq \dfrac{\gamma (1-\sigma)}{\sigma}$. Now, we have that $\widetilde{p}$ is a sub-solution of \cref{eqn:pb1}, thus for any $t \geq 0, x \in (-L,L)$, the corresponding solution $p^0$ satisfies 
    \begin{displaymath}
         p^0(t,x) - p(x) \geq \tilde p(t,x)-p(x) \geq \delta (1 - \sigma e^{-\gamma' t}) \phi_1(x). 
    \end{displaymath}
    Hence, for a given positive $\epsilon < \delta \displaystyle \min_{x} \phi_1(x)$, when $t \rightarrow +\infty$, solution $p^0$ cannot remain in the $\epsilon$-neighborhood of $p$ even if $p^\text{init} - p$ is small. This implies the instability of $p$. 
   \end{proof}
   Now, we present the proof of \cref{result}, 
   \begin{proof}[Proof of \cref{result}]
       For $p^\text{ext} \leq \alpha_1 < \theta, D > 0$, from \cref{thm:exist}, the (SI) steady-state solution $p$ exists for any $L > 0$ and is unique, $p(x) \leq p^\text{ext} \leq \alpha_1$ for all $x \in (-L,L)$. Moreover, from \cref{convexity}, the reaction term $f$ has $f'(q) < 0$, for any $q \in (0,\alpha_1)$. Then, for any $x \in (-L,L)$, $f'(p(x)) \leq  0 < \lambda_1$. Hence, $p$ is asymptotically stable. 
       
       Besides, from Theorems \ref{thm:exist} and \ref{thm:nonSM}, for any $L > 0$ such that $L < M_d(p^\text{ext}, D) < M_*(p^\text{ext}, D)$, \cref{eqn:pb1} has neither (SD) nor non-(SM) steady-state solutions. So the (SI) steady-state solution is the unique steady-state solution. 
       
       Using a similar argument for the case $p^\text{ext} \geq \alpha_2$, we obtain the result in \cref{result}. 
    \end{proof}
\section{Application to the control of dengue vectors by introduction of the bacterium {\it Wolbachia}}
   \label{sec:bio}
   \subsection{Model}
   In this section, we show an application of our model to the control of mosquitoes using {\it Wolbachia}. Mosquitoes of genus {\it Aedes} are the vector of many dangerous arboviruses, such as dengue, zika, chikungunya and others. There exists neither effective treatment nor vaccine for these vector-borne diseases, and in such conditions, the main method to control them is to control the vector population. A biological control method using a bacterium called {\it Wolbachia} (see \cite{HOF}) was discovered and developed with this purpose. Besides reducing the ability of mosquitoes to transmit viruses, {\it Wolbachia} also causes an important phenomenon called {\it cytoplasmic incompatibility} (CI) on mosquitoes. More precisely, if a wild female mosquito is fertilized by a male carrying {\it Wolbachia}, its eggs almost cannot hatch. For more details about CI, we refer to \cite{WER}. In the case of {\it Aedes} mosquitoes, {\it Wolbachia} reduces lifespan, changes fecundity, and blocks the development of the virus. However, it does not influence the way mosquitoes move.
   
   In \cite{STR16}, model \cref{eqn:redu1}, \cref{eqn:redu2} was considered with $n_1 = n_i$ the density of the mosquitoes which are infected by {\it Wolbachia} and $n_2 = n_u$ the density of wild uninfected mosquitoes. Consider the following positive parameters:
   
   $\bullet$ $d_u, \delta d_u$: death rate of, respectively  uninfected mosquitoes and infected mosquitoes, $\delta > 1$ since {\it Wolbachia} reduces the lifespan of the mosquitoes;
   
   $\bullet$ $b_u, (1-s_f)b_u$: birth rate of, respectively uninfected mosquitoes and infected ones. Here $s_f \in [0,1)$ characterizes the fecundity decrease;
   
   $\bullet$ $s_h \in (0,1] $: the fraction of uninfected females' eggs fertilized by infected males that do not hatch, due to the cytoplasmic incompatibility (CI);
   	
   $\bullet$ $K$: carrying capacity, $A$: diffusion coefficient.
   
   Parameters $\delta, s_f, s_h$ have been estimated in several cases and can be found in the literature  (see \cite{BAR} and references therein). We always assume that $s_f < s_h$ (in practice, $s_f$ is close to 0 while $s_h$ is close to 1). 
   
   Several models have been proposed using these parameters. In the present study, a system of Lotka-Volterra type is proposed, where the parameter $\epsilon > 0$ is used to characterize the high fertility as follows. 
   \begin{equation}
   \begin{cases}
   \partial_t n_i^\epsilon - A\partial_{xx}n_i^\epsilon = (1-s_f)\dfrac{b_u}{\epsilon}n_i^\epsilon \left(1 - \dfrac{n_i^\epsilon + n_u^\epsilon}{K}\right) - \delta d_u n_i^\epsilon, \\
   \partial_t n_u^\epsilon - A\partial_{xx} n_u^\epsilon = \dfrac{b_u}{\epsilon}n_u^\epsilon \left( 1- s_h \dfrac{n_i}{n_i^\epsilon + n_u^\epsilon} \right) \left(1 - \dfrac{n_i^\epsilon + n_u^\epsilon}{K}\right) - d_u n_u^\epsilon,
   \end{cases}
   \label{eqn:model}
   \end{equation}
   where the reaction term describes birth and death. The factor $\left( 1- s_h \dfrac{n_i^\epsilon}{n_i^\epsilon + n_u^\epsilon} \right)$ characterizes the cytoplasmic incompatibility. Indeed, when $s_h = 1$, no egg of uninfected females fertilized by infected males can hatch, that is, there is complete cytoplasmic incompatibility. The factor becomes $\dfrac{n_u^\epsilon}{n_i^\epsilon + n_u^\epsilon}$ which means the birth rate of uninfected mosquitoes depends on the proportion of uninfected parents because only an uninfected couple can lay uninfected eggs. Whereas, $s_h = 0$ means that all the eggs of uninfected females hatch. In this case, the factor $\left( 1- s_h \dfrac{n_i^\epsilon}{n_i^\epsilon + n_u^\epsilon} \right)$ becomes $1$, so the growth rate of uninfected population is not altered by the pressure of the infected one.   
   
   In paper \cite{STR16}, the same model was studied in the entire space $\mathbb{R}$. In that case, the system \cref{eqn:model} has exactly two stable equilibria, namely the {\it Wolbachia} invasion steady state and the {\it Wolbachia} extinction steady state. In this paper, the authors show that when $\epsilon \rightarrow 0$ and the reaction terms satisfies some appropriate conditions, the proportion $p^\epsilon = \dfrac{n_i^\epsilon}{n_i^\epsilon + n_u^\epsilon}$ converges to the solution $p^0$ of the scalar equation $\partial_t p^0 - A\partial_{xx} p^0 = f(p^0)$, with the reaction term
   \begin{equation}
   f(p) = \delta d_u s_h \dfrac{p(1-p)(p-\theta)}{s_h p^2 - (s_f + s_h)p + 1},
   \label{eqn:reac}
   \end{equation} 	
   with $\theta = \dfrac{s_f + \delta -1}{\delta s_h}$. We will always assume that $s_f + \delta (1 - s_h) < 1$, so $\theta \in (0,1)$, and $f$ is a bistable function on $(0,1)$. The two stable steady states $1$ and $0$ of \cref{eqn:pb1} correspond to the success or failure of the biological control using {\it Wolbachia}. 	
   
   \subsection{Mosquito population in presence of migration}
   In this study, the migration of mosquitoes is taken into account. Typically, the inflow of wild uninfected mosquitoes and the outflow of the infected ones may influence the efficiency of the method using {\it Wolbachia}. Here, to model this effect, system \cref{eqn:model} is considered in a bounded domain with appropriate boundary conditions to characterize the migration of mosquitoes. In one-dimensional space, we consider $\Omega = (-L,L)$ and Robin boundary conditions as in \cref{eqn:redu2}
   \begin{equation}
   \begin{cases}
   \frac{\partial n_i^\epsilon}{\partial \nu} = -D(n_i^\epsilon - n_i^{\text{ext},\epsilon}) \qquad \text{ at } x = \pm L,\\
   \frac{\partial n_u^\epsilon}{\partial \nu} = -D(n_u^\epsilon - n_u^{\text{ext},\epsilon}) \qquad \text{ at } x = \pm L,
   \label{eqn:bord}
   \end{cases}
   \end{equation}	
   where $n_i^{\text{ext},\epsilon}, n_u^{\text{ext},\epsilon}$ do not depend on $t$ and $x$ but depend on parameter $\epsilon > 0$. Denote $p^\epsilon = \dfrac{n_i^\epsilon}{n_i^\epsilon + n_u^\epsilon}, n^\epsilon = \dfrac{1}{\epsilon} \left( 1 - \dfrac{n_i^\epsilon + n_u^\epsilon}{K} \right)$. In \cref{sec:converge}, we prove that when $\epsilon \rightarrow 0$, up to extraction of sub-sequences, $n^\epsilon$ converges weakly to $n^0 = h(p^0) $ for some explicit function $h$, and $p^\epsilon $ converges strongly towards solution $p^0$ of \cref{eqn:pb1} where $p^\text{ext}$ is the limit of $\dfrac{n_i^{\text{ext},\epsilon}}{n_i^{\text{ext},\epsilon} + n_u^{\text{ext},\epsilon}}$ when $\epsilon \rightarrow 0$, and the reaction term $f$ as in \cref{eqn:reac}. Function $f$ satisfies \cref{reaction} and \cref{convexity}, so the results in \cref{thm:exist} and \ref{thm:stability} can be applied to this problem. By changing spatial scale, we can normalize the diffusion coefficient into $A = 1$. 
   
   In this application, the parameters $L, D, p^\text{ext}$ correspond to the size of $\Omega$, the migration rate of mosquitoes, the proportion of infected mosquitoes surrounding the boundary. The main results in the present paper give information about existence and stability of equilibria depending upon different conditions for these parameters. Especially, from \cref{result}, we obtain that when the size $L$ of the domain is small, there exists a unique equilibrium for this problem and its values depend on the proportion of mosquitoes carrying \textit{Wolbachia} outside the domain ($p^\text{ext}$). More precisely, when $p^\text{ext}$ is small (i.e., $p^\text{ext} \leq \alpha_1$), solution of \cref{eqn:pb1} converges to the steady-state solution close to $0$, which corresponds to the extinction of mosquitoes carrying {\it Wolbachia}. Therefore, in this situation, the replacement strategy fails because of the migration through the boundary. Otherwise, when the proportion outside the domain is high (i.e., $p^\text{ext} \geq \alpha_2$), then the long-time behavior of solutions of \cref{eqn:pb1} has values close to $1$, which means that the mosquitoes carrying \textit{Wolbachia} can invades the whole population.  
   \subsection{Numerical illustration}
   \label{sec:simulation}
   In this section, we present the numerical illustration for the above results. Parameters are fixed according to biologically relevant data (adapted from \cite{FOC}). Time unit is the day, and parameters per day are in \cref{tab:parameter}. 
   \begin{table}
	    \caption{Parameters for the numerical illustration}
	    \label{tab:parameter}
	    \begin{center}
	      \begin{tabular}{| c | c c c c c c |}
            \hline
            Parameters & $b_u$ & $d_u$ & $\delta$ & $\sigma$ & $s_f$ & $s_h$ \\ [0.5ex] 
            \hline
            Values & 1.12 & 0.27 & $\frac{10}{9}$ & 1 & 0.1 & 0.8 \\ 
            \hline 
        \end{tabular}
	    \end{center}
	\end{table}
    Then, the reaction term $f$ in \cref{eqn:reac} has $\theta = 0.2375$, $\beta \approx 0.3633, \alpha_1 \approx 0.12, \alpha_2 \approx 0.7$. As proposed in section 3 of the modeling article \cite{OTR}, we may pick the value $830 m^2$ per day for the diffusivity of {\it Aedes} mosquitoes. Choose $A = 1$, so the $x$-axis unit in the simulation corresponds to $\sqrt{830/1} \approx 29$ m. 
   
   In the following parts, we check the convergence of $p^\epsilon$ when $\epsilon \rightarrow 0$ in \cref{convergence_num}. In \cref{steady_state_num}, corresponding to different parameters, we compute numerically the solutions of \cref{eqn:pb1} and \cref{eqn:pb2} to check their existence and stability. 
   
   \subsubsection{Convergence to the scalar equation}
   \label{convergence_num}
   Consider a mosquito population with large fecundity rate, that is, $ \epsilon \ll 1$. Model \cref{eqn:model} with boundary condition in \cref{eqn:bord} takes into account the migration of mosquitoes. 
   \begin{figure}
   	\centering
   	\begin{subfigure}{0.32\textwidth}
   		\centering
   		\includegraphics[width=\textwidth]{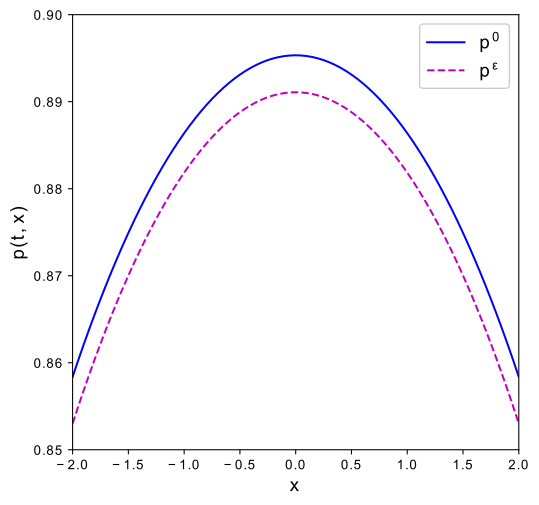}
   		\setlength{\abovecaptionskip}{0pt}
		\setlength{\belowcaptionskip}{0pt}
   		\caption{$\epsilon = 0.1$}
   	\end{subfigure}
   	\begin{subfigure}{0.32\textwidth}
   		\centering
   		\includegraphics[width=\textwidth]{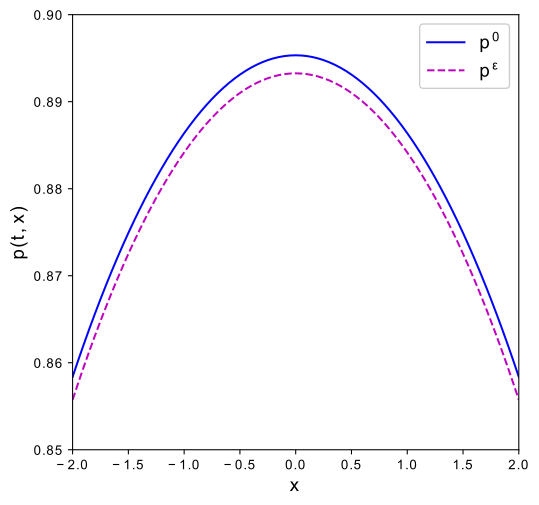}
   		\setlength{\abovecaptionskip}{0pt}
		\setlength{\belowcaptionskip}{0pt}
   		\caption{$\epsilon = 0.05$}
   	\end{subfigure}
   	\begin{subfigure}{0.32\textwidth}
   		\centering
   		\includegraphics[width=\textwidth]{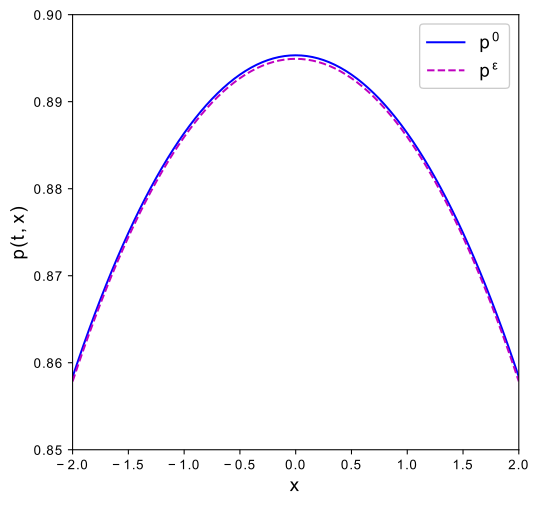}
   		\setlength{\abovecaptionskip}{0pt}
		\setlength{\belowcaptionskip}{0pt}
   		\caption{$\epsilon = 0.01$}
   	\end{subfigure}
   	\setlength{\abovecaptionskip}{5pt}
	\setlength{\belowcaptionskip}{0pt}
   	\caption{$p^0$ and $p^\epsilon$ at time $t = 50$ (days)}
   	\label{fig:epsilon}
   \end{figure}
   
   Fix $D = 0.05, p^\text{ext} = 0.1$ and $ L = 2$, the system \cref{eqn:model}, \cref{eqn:bord} is solved numerically thanks to a semi-implicit finite difference scheme with 3 different values of the parameters $\epsilon$. The initial data are chosen such that $n_i^\epsilon(t = 0) = n_u^\epsilon(t = 0)$, that is, $p^\text{init} = 0.5$. In \cref{fig:epsilon}, at time $t = 50$ days, the numerical solutions of \cref{eqn:pb1} are plotted with blue solid lines, the proportions $p^\epsilon = \dfrac{n_i^\epsilon}{n_i^\epsilon+n_u^\epsilon}$ are plotted with dashed lines. We observe that when $\epsilon$ goes to 0, the proportion $p^\epsilon$ converges to the solution $p^0$ of system \cref{eqn:pb1}. 
   \subsubsection{Steady-state solutions}
   \label{steady_state_num}
   For the different values of $p^\text{ext}$, the values of the integrals $\mathcal{F}_{1}$ and $\mathcal{F}_2$ as functions of $p(L)$ in \cref{eqn:F1} and \cref{eqn:F2} are plotted in \cref{fig:plotF}. For fixed values of $D$ and $p^\text{ext}$, \cref{fig:plotF} can play the role of bifurcation diagrams that show the relation between the value $p(L)$ of symmetric solutions $p$ and parameter $L$. Then, we can obtain the critical values of parameter $L$. Next, we compute numerically the (SM) steady-state solutions of \cref{eqn:pb1} with different values of $L > 0, D > 0, p^\text{ext} \in (0,1)$. 
   \begin{figure}
   	\centering
   	\begin{subfigure}{0.32\textwidth}
   		\centering
   		\includegraphics[width = \textwidth]{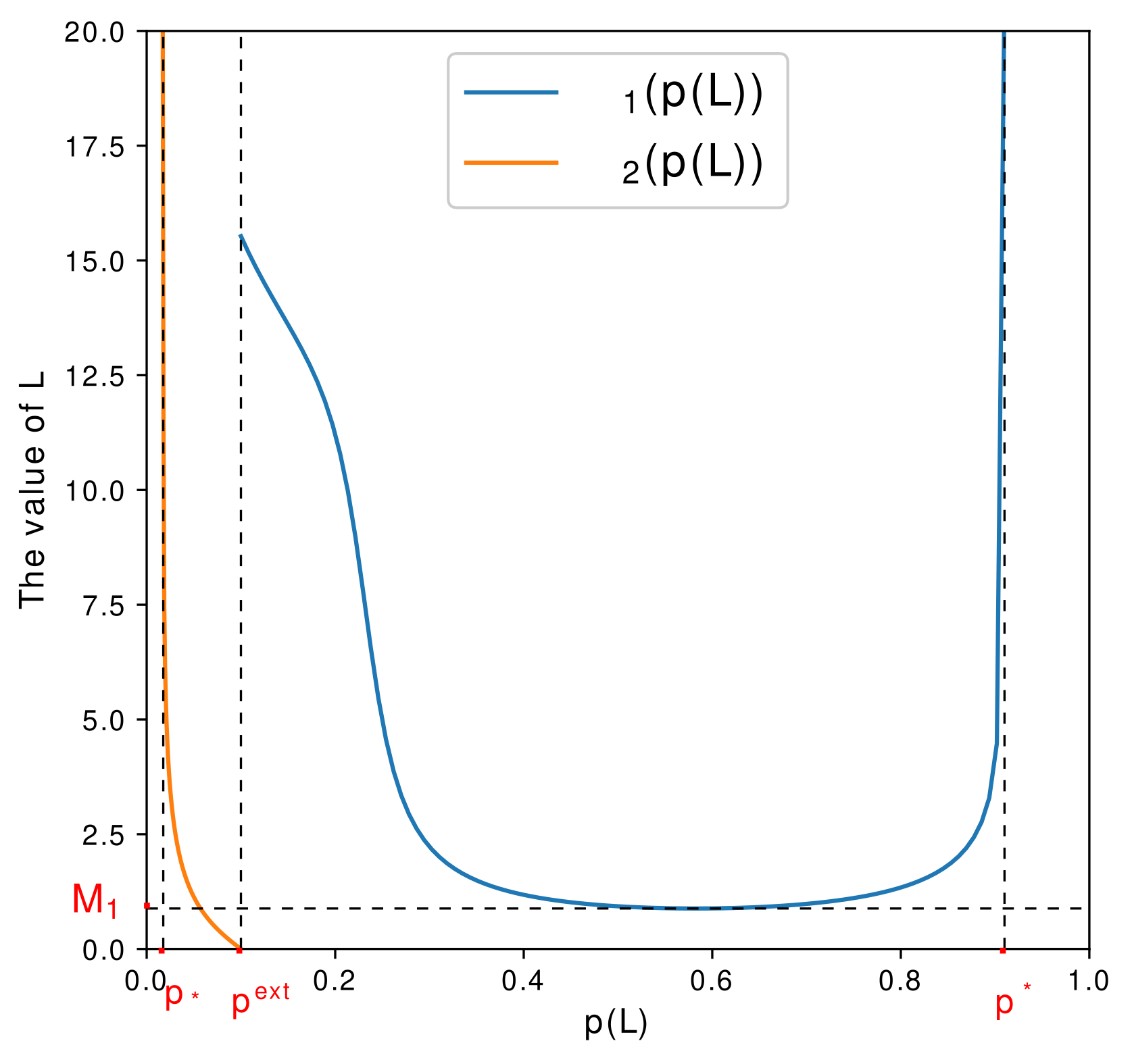}
   		\setlength{\abovecaptionskip}{0pt}
		\setlength{\belowcaptionskip}{0pt}
   		\caption{$p^\text{ext} = 0.1, D = 0.05$}
   		\label{fig:pext01}
   	\end{subfigure}
   	\begin{subfigure}{0.32\textwidth}
   		\centering
   		\includegraphics[width = \textwidth]{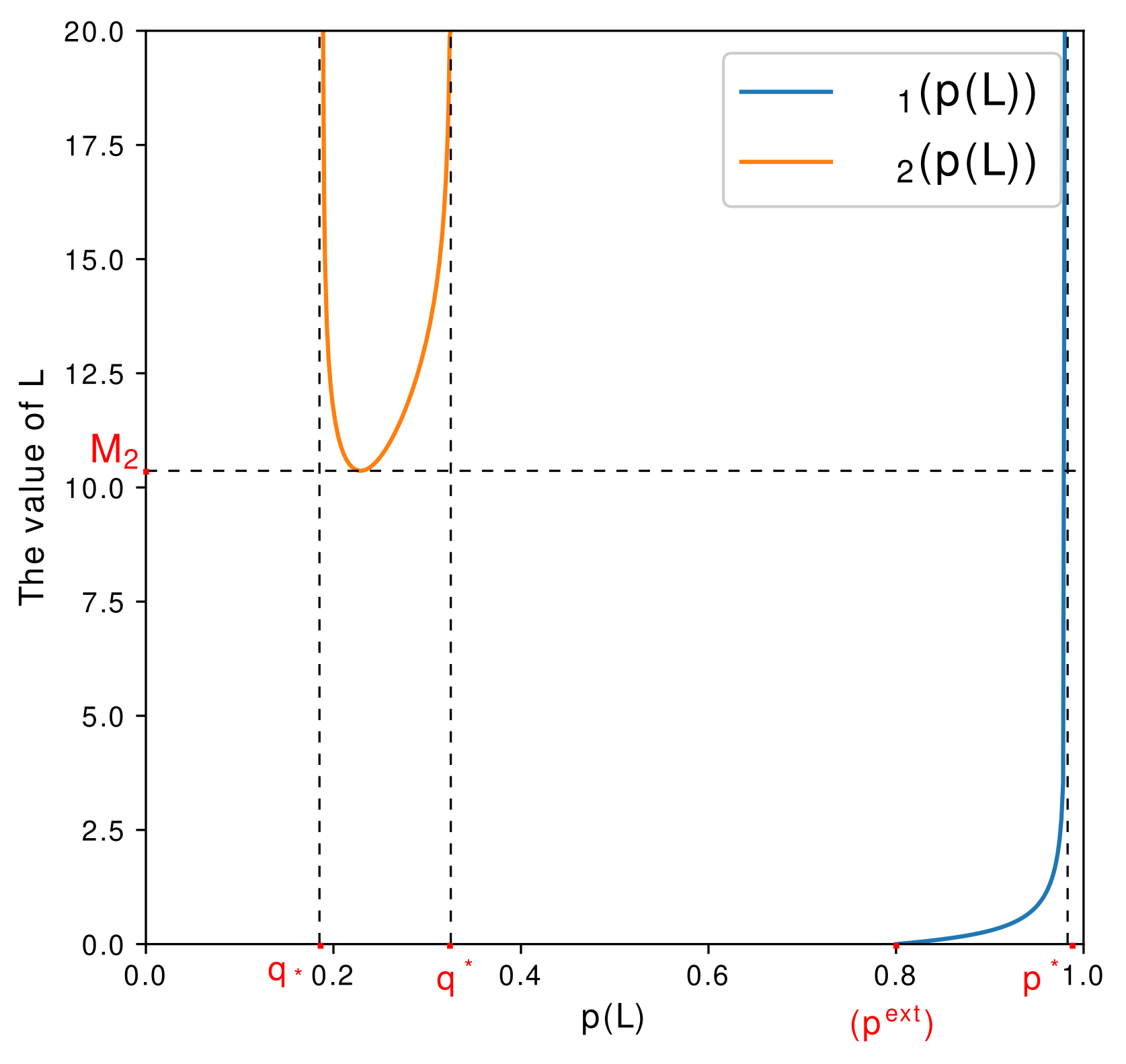}
   		\setlength{\abovecaptionskip}{0pt}
		\setlength{\belowcaptionskip}{0pt}
   		\caption{$p^\text{ext} = 0.8, D = 0.05$}
   		\label{fig:pext08}
   	\end{subfigure} 
   	\begin{subfigure}{0.32\textwidth}
   		\centering
   		\includegraphics[width = \textwidth]{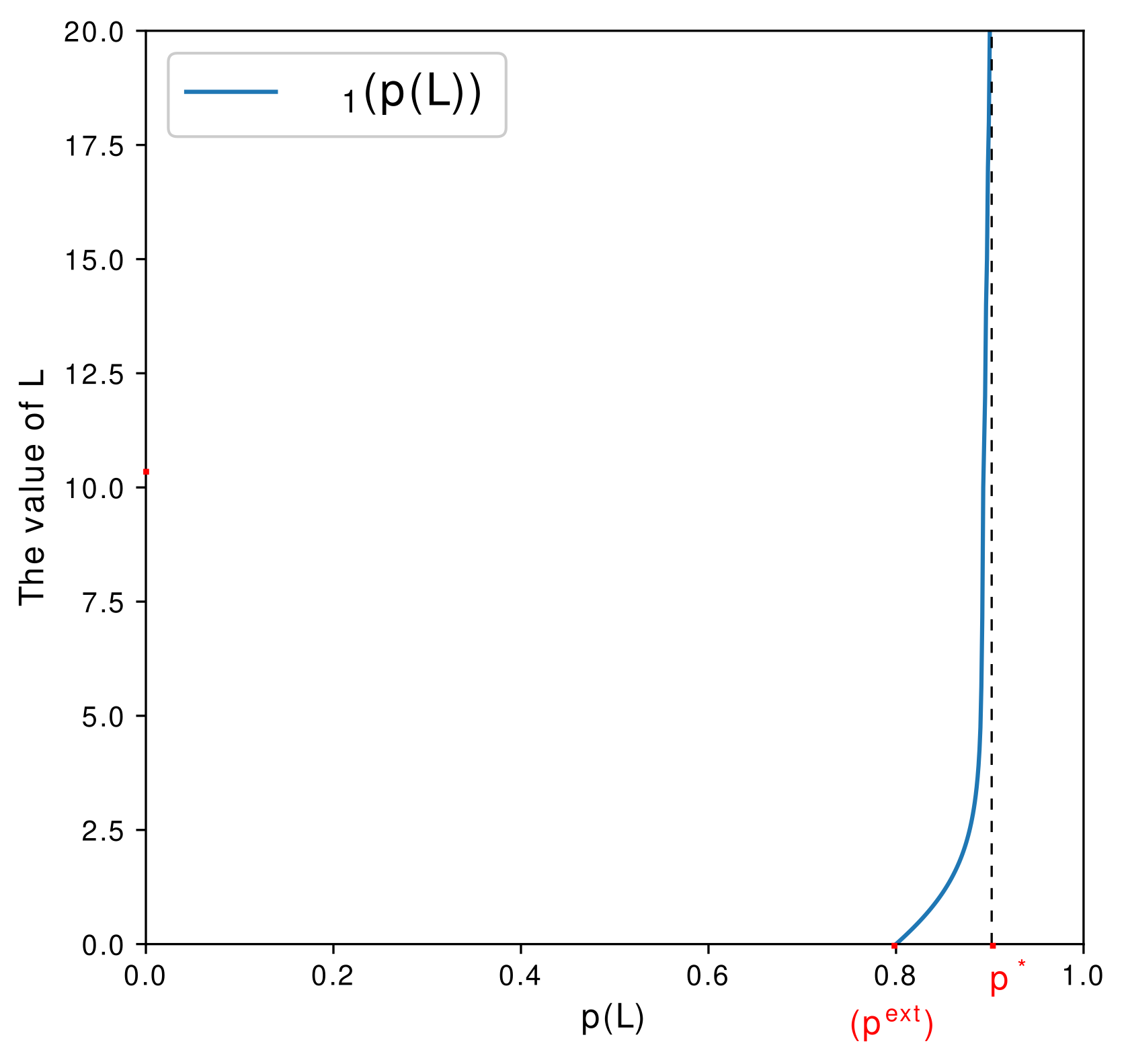}
   		\setlength{\abovecaptionskip}{0pt}
		\setlength{\belowcaptionskip}{0pt}
   		\caption{$p^\text{ext} = 0.8, D = 0.5$}
   		\label{fig:pext082}
   	\end{subfigure}
   	\setlength{\abovecaptionskip}{5pt}
	\setlength{\belowcaptionskip}{0pt}
   	\caption{Graphs of $\mathcal{F}_1$ and $\mathcal{F}_2$ with respect to $p(L)$.}
   	\label{fig:plotF}
   \end{figure}
   
   \noindent \textbf{Numerical method: } We use Newton method to solve equations $L = \mathcal{F}_{1,2}(p(L))$ and obtain the values of $p(L)$, then we can deduce the value of $p(0)$ by \cref{eqn:eq1}. Again by Newton method, we obtain $p(x)$ for any $x$ by solving $x = \displaystyle \int_{p(0)}^{p(x)} \dfrac{(-1)^k ds}{\sqrt{2F(p(0)) - 2 F(s)}}$. We also construct numerically a non-(SM) steady-state solution by the same technique but it is more sophisticated and details of the construction are omitted in this article for the sake of readability. 
   
   We also plot the time dynamics of solution $p^0(t,x)$ of \cref{eqn:pb1} at $t =10, 20, 40,60, 100$ to verify the asymptotic stability of steady-state solutions. Next, we consider different values of $p^\text{ext}$ and present our observation in each case.
   \begin{figure}
      \centering
   	\begin{subfigure}{0.32\textwidth}
   		\centering
   		\includegraphics[width = \textwidth]{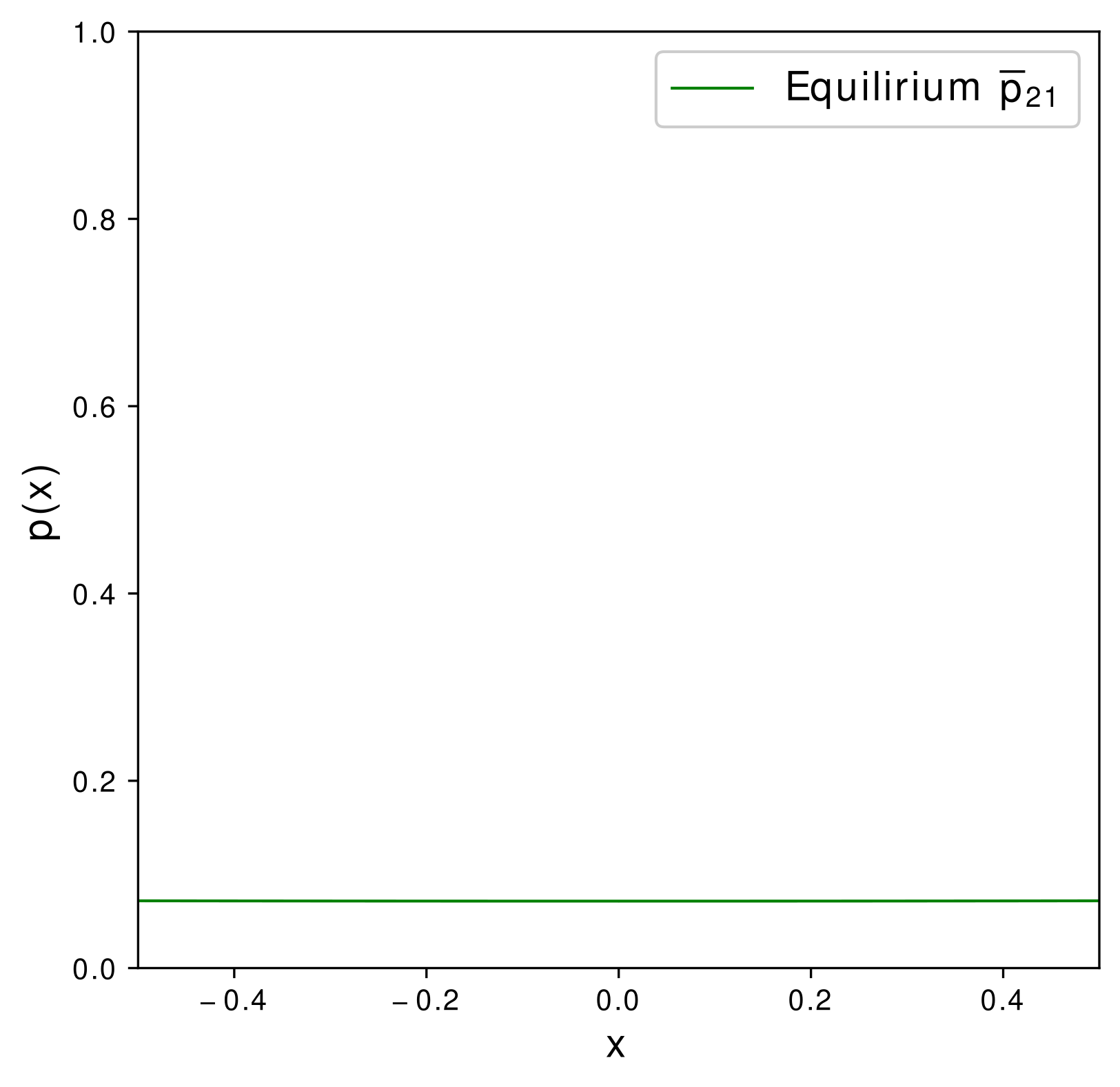} \\
   		\includegraphics[width = \textwidth]{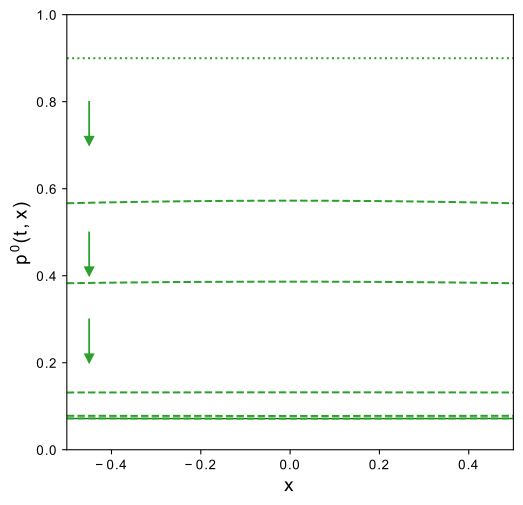} 
   		\caption{$L = 0.5 < M_1$}
   		\label{fig:04}
   	\end{subfigure}
   	\begin{subfigure}{0.32\textwidth}
   		\centering
   		\includegraphics[width = \textwidth]{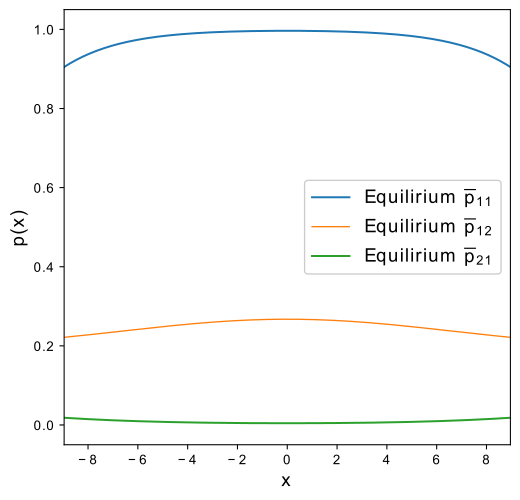} \\
   		\includegraphics[width = \textwidth]{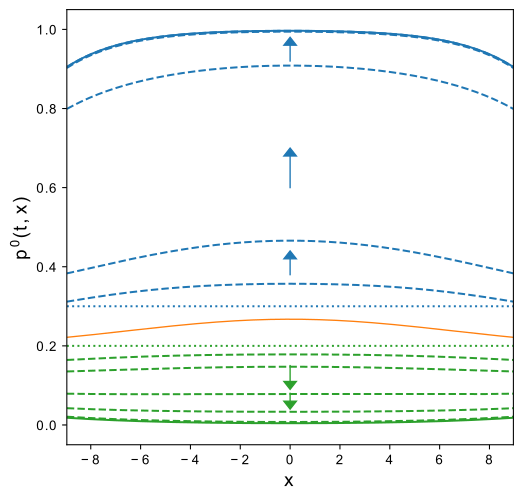}
   		\caption{$L = 8.96 > M_* > M_1$}
   		\label{fig:03}
   	\end{subfigure}
   	\begin{subfigure}{0.32\textwidth}
   		\centering
   		\includegraphics[width = \textwidth]{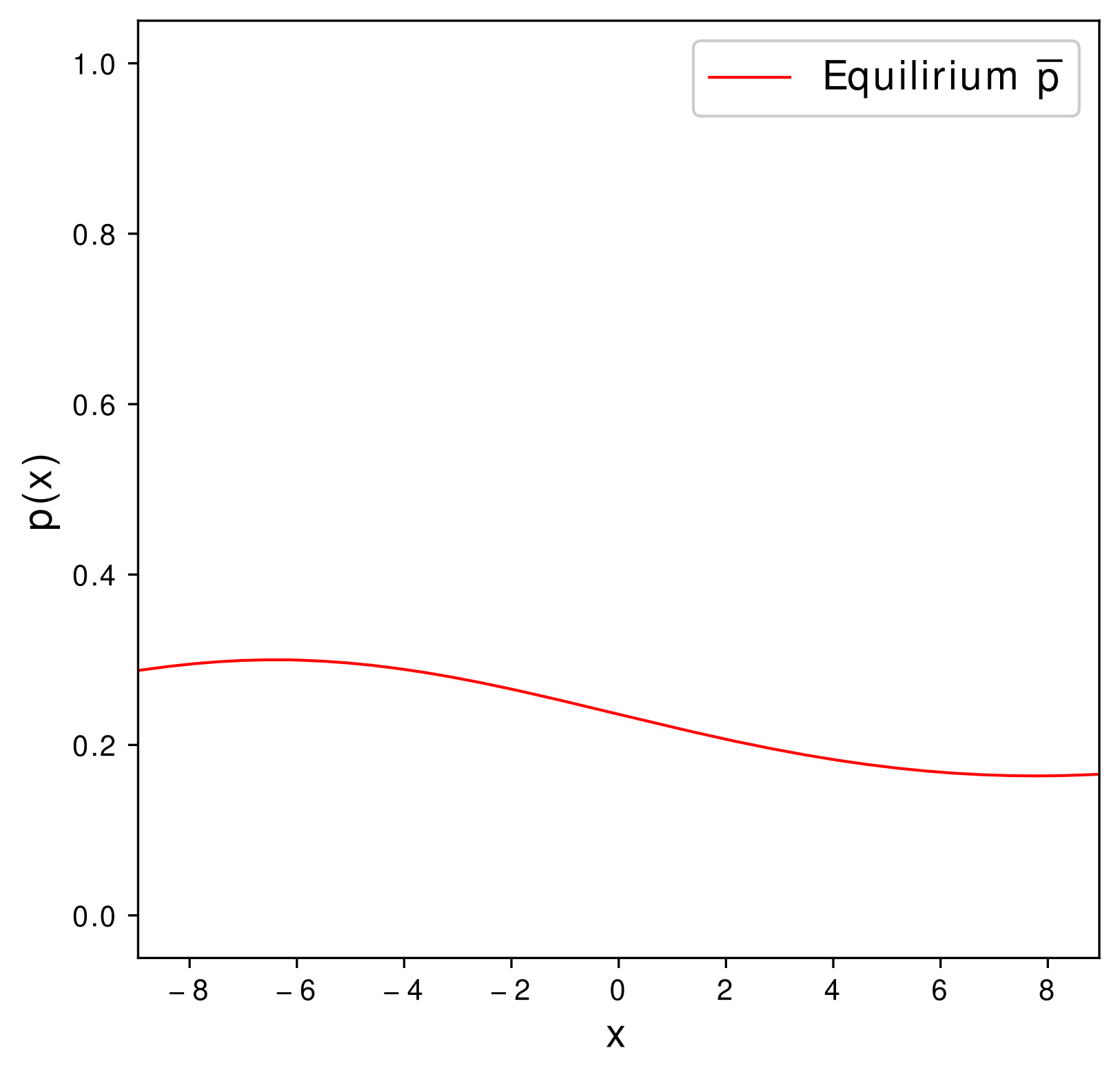} \\
   		\includegraphics[width = \textwidth]{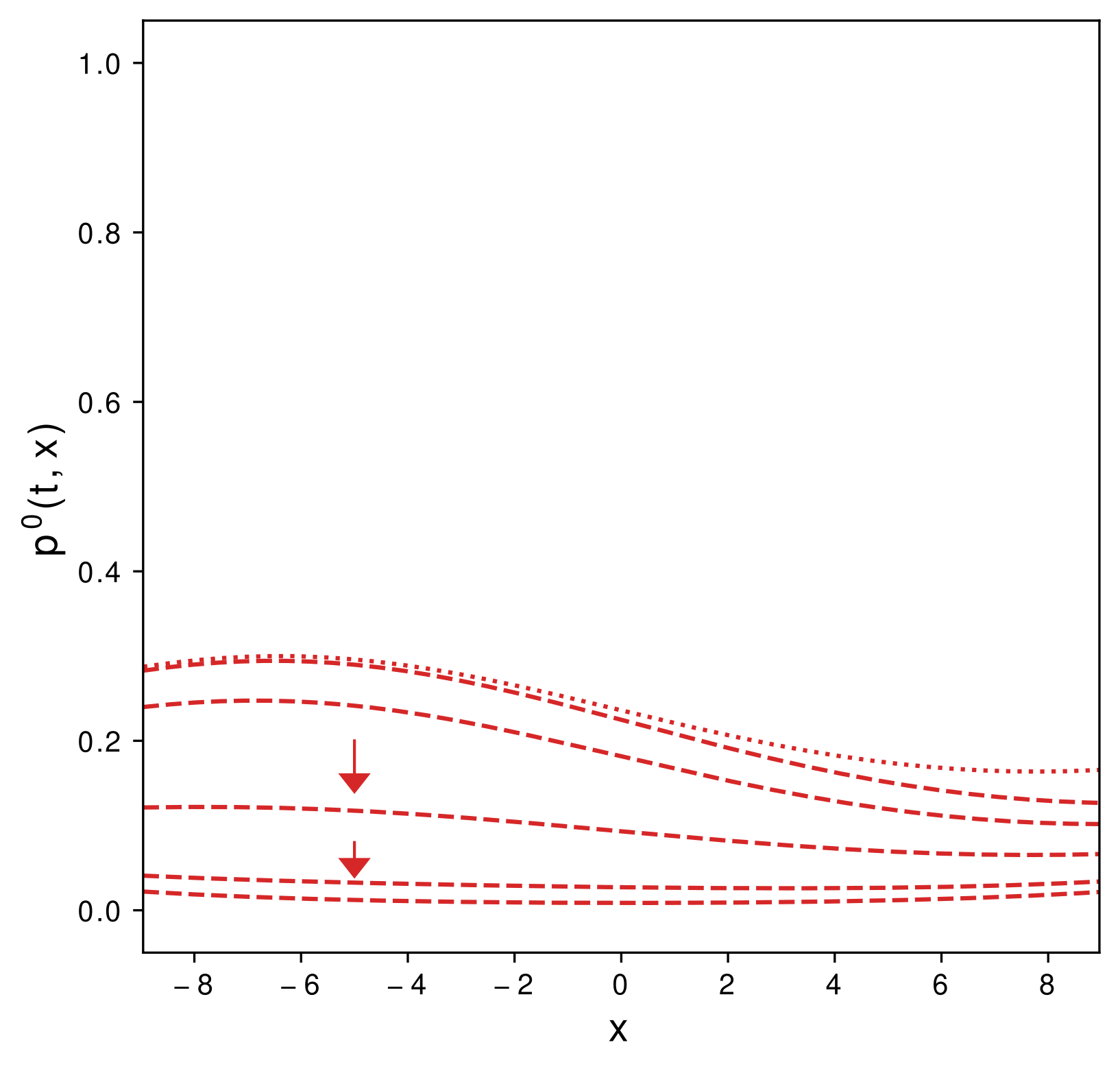}
   		\caption{$L = 8.96 > M_* > M_1$}
   		\label{fig:nonsym}
   	\end{subfigure}
   	\setlength{\abovecaptionskip}{0pt}
	\setlength{\belowcaptionskip}{-5pt}
   	\caption{Steady-state and time-dependent solutions when $p^\text{ext} = 0.1, D = 0.05$}
   	\label{fig:equi1}
   \end{figure}
   
   \noindent $\bullet$ { \bf Case 1: $p^\text{ext} = 0.1 < \alpha_1 $}.
   
   For $D = 0.05$ fixed, we observe in \cref{fig:pext01} that for any $L > 0$, equation $\mathcal{F}_2(p(L)) = L$ always admits exactly one solution. Thus, there always exists one (SI) steady-state solution with small values. We approximate that 
   \begin{displaymath}
        M_d(0.1,0.05) = M_1 \approx 0.8819, \quad M_*(0.1, 0.05) \approx 8.625.
   \end{displaymath}
   Also from \cref{fig:pext01}, we observe that when $L = M_1$, a bifurcation occurs and \cref{eqn:pb1} admits a (SD) steady-state solution, and when $L > M_1$ one can obtain two (SD) solutions. Moreover, when $L \geq M_*$, there exist non-symmetric steady-state solutions. We do numerical simulations for two values of $L$ as follows. 
   
   For $L = 0.5 < M_1$, the unique equilibrium $\overline{p}_{21}$ is (SI) and has values close to $0$ (see \cref{fig:04}). Solution $p^0$ of \cref{eqn:pb1} with any initial data converges to $\overline{p}_{21}$. This simulation is coherent with the asymptotic stability that we proved in \cref{result}.
   
   For $L = 8.96 > M_* > M_1$, together with $\overline{p}_{21}$, there exist two more (SD) steady-state solutions, namely $\overline{p}_{11}$, $\overline{p}_{12}$, (see \cref{fig:03}). This plot show that these steady-state solutions are ordered, and the time-dependent solutions converges to either the largest one $\overline{p}_{11}$ or the smallest one $\overline{p}_{21}$, while $\overline{p}_{12}$ with intermediate values is not an attractor. In \cref{fig:nonsym}, we find numerically a non-symmetric solution $\overline{p}$ of \cref{eqn:pb2} corresponding to orbit $T_3$ as in \cref{fig:phase}. Let the initial value $p^\text{init} \equiv \overline{p}$, then we observe from \cref{fig:nonsym} that $p^0$ still converges to the symmetric equilibrium $\overline{p}_{21}$.
   
    \noindent Moreover, the value $\lambda_1$ of \cref{thm:stability} in this case is approximately equal to $0.0063$. We also obtain that for any $x \in (-L,L)$,
    \begin{displaymath}
         f'(\overline{p}_{11}(x)) < 0,\quad f'(\overline{p}_{21}(x)) < 0,\quad f'(\overline{p}_{12}(x)) > 0.0462,\quad f'(\overline{p}(x)) > 0.022.
    \end{displaymath}
    Therefore, by applying \cref{thm:stability}, we deduce that the steady-state solutions $\overline{p}_{11}, \overline{p}_{21}$ are asymptotically stable, $\overline{p}_{12}$ and the non-symmetric equilibrium $\overline{p}$ are unstable. Thus, the numerical simulations in \cref{fig:equi1} are coherent to the theoretical results that we proved. 
   
   \noindent $\bullet$ { \bf Case 2: $p^\text{ext} = 0.8 > \alpha_2 > \beta$}. 
   
   In this case, we obtain $D_* \approx 0.16$. We present numerical illustrations for two cases: $D = 0.05 < D_*$ and $D = 0.5 > D_*$. 
   
   $\circ$ For $D = 0.05 < D_*$, we have $M_i(0.8, 0.05) = M_2 \approx 10.3646$ (see \cref{fig:pext08}). 
   
    For $L = 2 < M_2$, the unique equilibrium $\overline{p}_{11}$ is (SD) and has values close to $1$ (see \cref{fig:01}). The time-dependent solution $p^0$ of \cref{eqn:pb1} with any initial data converges to $\overline{p}_{11}$. This simulation is coherent to the asymptotic stability we obtained in \cref{result}. 
   
    For $L = 12 > M_2$, together with $\overline{p}_{11}$, there exist two more (SI) steady-state solutions, namely $\overline{p}_{21}$, $\overline{p}_{22}$, and they are ordered (see \cref{fig:02}). In this case, we obtain approximately that $\lambda_1 \approx 0.0063$ and for any $x \in (-L,L)$, one has 
   \begin{displaymath}
        f'(\overline{p}_{11}(x)) < 0, \quad f'(\overline{p}_{21}(x)) \in (-0.0398, 0.0368),  \quad f'(\overline{p}_{22}(x)) \in (-0.0195, 0.0673).
   \end{displaymath}
   By sufficient conditions in \cref{thm:stability}, we obtain that $\overline{p}_{11}$ is asymptotically stable but we can not conclude the stability for $\overline{p}_{21}$ and $\overline{p}_{22}$. The time dynamics of $p^0$ in \cref{fig:02} suggests that the smallest steady-state solution $\overline{p}_{21}$ is asymptotically stable and $\overline{p}_{22}$ seems to be unstable.  
   
   $\circ$ For $D = 0.5 > D_*$, function $\mathcal{F}_2$ is not defined (see \cref{fig:pext082}), so problem \cref{eqn:pb2} admits only one (SD) steady-solution, and we obtain that it is unique and asymptotically stable (see \cref{fig:00}). 
   \begin{figure}
      \centering
   	\begin{subfigure}{0.32\textwidth}
   		\centering
   		\includegraphics[width = \textwidth]{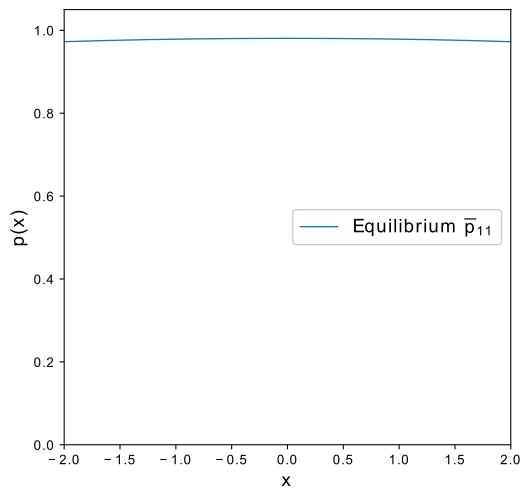} \\
   		\includegraphics[width = \textwidth]{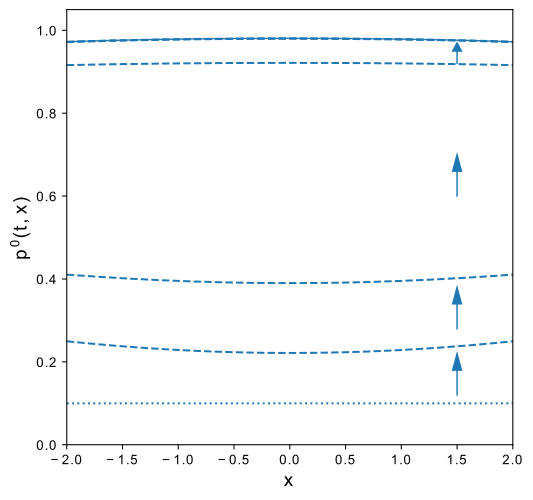} 
   		\caption{$L = 2, D = 0.05 < D_*$}
   		\label{fig:01}
   	\end{subfigure}
   	\begin{subfigure}{0.32\textwidth}
   		\centering
   		\includegraphics[width = \textwidth]{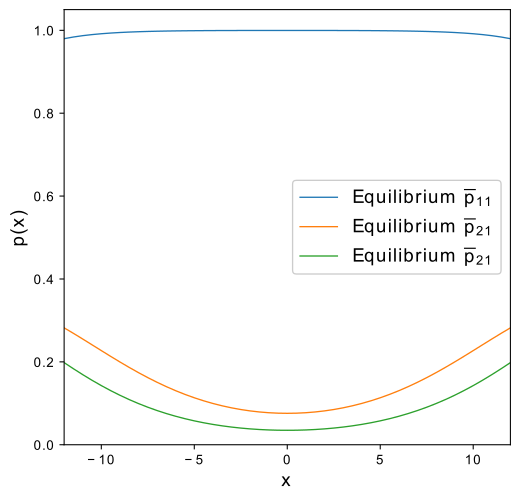} \\
   		\includegraphics[width = \textwidth]{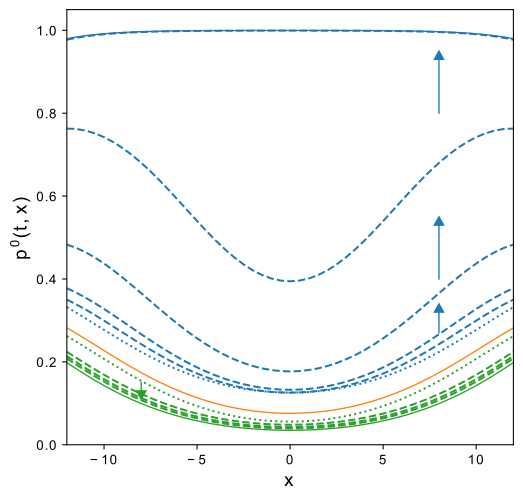}
   		\caption{$L = 12, D = 0.05 < D_*$}
   		\label{fig:02}
   	\end{subfigure}
   	\begin{subfigure}{0.32\textwidth}
   		\centering
   		\includegraphics[width = \textwidth]{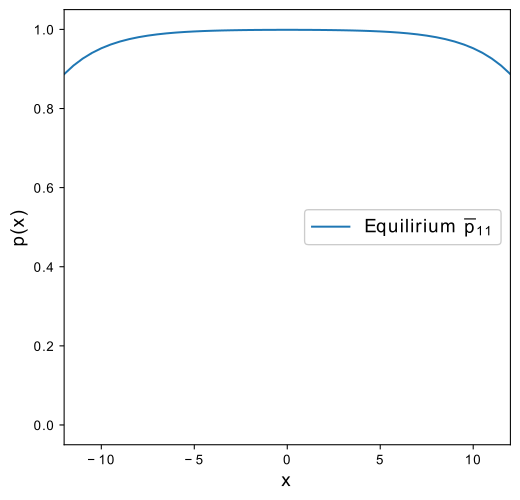} \\
   		\includegraphics[width = \textwidth]{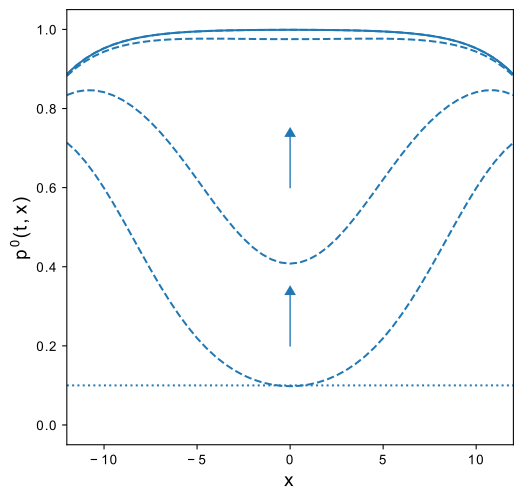}
   		\caption{$L = 12, D = 0.5 > D_*$}
   		\label{fig:00}
   	\end{subfigure}
   	\setlength{\abovecaptionskip}{5pt}
	\setlength{\belowcaptionskip}{0pt}
   	\caption{Steady-state and time-dependent solutions when $p^\text{ext} = 0.8$}
   	\label{fig:equi2}
   \end{figure}
 
	\section{Conclusion and perspectives}
	We have studied the existence and stability of steady-state solutions with values in $[0,1]$ of a reaction-diffusion equation 
	\begin{displaymath}
	     \partial_t p - \partial_{xx}p = f(p)
	\end{displaymath}
	on an interval $(-L,L)$ with cubic nonlinearity $f$ and inhomogeneous Robin boundary conditions
	\begin{displaymath}
	     \dfrac{\partial p}{\partial \nu} = D(p-p^\text{ext}),
	\end{displaymath}
	where constant $p^\text{ext} \in (0,1)$ is an analogue of $p$, and constant $D > 0$. We have shown how the analysis of this problem depends on the parameters $p^\text{ext}$, $D$, and $L$. More precisely, the main results say that there always exists a symmetric steady-state solution that is monotone on each half of the domain. For $p^\text{ext}$ large, the value of this steady-state solution is close to 1, otherwise, it is close to $0$. Besides, the larger value of $L$, the more steady-state solutions this problem admits. We have found the critical values of $L$ so that when the parameters surpass these critical values, the number of steady-state solutions increases. We also provided some sufficient conditions for the stability and instability of the steady-state solutions. 
	
	We presented an application of our results on the control of dengue vector using {\it Wolbachia} bacterium that can be transmitted maternally. Since {\it Wolbachia} can help reduce vectorial capacity of the mosquitoes, the main goal of this method is to replace wild mosquitoes by mosquitoes carrying {\it Wolbachia}. In this application, we considered $p$ as the proportion of mosquitoes carrying {\it Wolbachia} and used the equation above to model the dynamic of the mosquito population. The boundary condition describes the migration through the border of the domain. This replacement method only works when $p$ can reach an equilibrium close to $1$. Therefore, the study of existence and stability of the steady-state solution close to $1$ is meaningful and depends strongly on the parameters $p^\text{ext}$, $D$, and $L$. In realistic situations, the proportion $p^\text{ext}$ of mosquitoes carrying {\it Wolbachia} outside the domain is usually low. Using the theoretical results proved in this article, one sees that, to have major chances of success, one should try to treat large regions ($L$ large), well isolated ($D$ small) and possibly applying a population replacement method in a zone outside $\Omega$ (to increase $p^\text{ext}$ by reducing its denominator). 
	
	As a natural continuation of the present work, higher dimension problems and more general boundary conditions can be studied. In more realistic cases, $p^\text{ext}$ can be considered to depend on space and the periodic solutions can be the next problem for our study. Besides, when an equilibrium close to $1$ exists and is stable, one may consider multiple strategies using multiple releases of mosquitoes carrying {\it Wolbachia}. To optimize the number of mosquitoes released to guarantee the success of this method under the difficulties enlightened by this paper is an interesting problem for future works.  
	
	\appendix
	\section{Asymptotic limit of reaction-diffusion systems}
	    \label{sec:converge}
		In \cite{STR16}, the authors reduced a 2-by-2 reaction-diffusion system of Lotka-Volterra type modeling two biological populations to a scalar equation as in \cref{eqn:pb1} when the fecundity rate is very large. This limit problem was first proved in the whole domain. In the present study, we prove the limit for a system in a bounded domain with inhomogeneous Robin boundary conditions. In the following part, we recall the necessary assumptions and present results about this problem. 
		
		Although the main result of the paper is in one-dimensional space, the following result holds in any dimension $d$. Let $\Omega \subset \mathbb{R}^d$ be a bounded domain and consider the initial-boundary-value problem \cref{eqn:pb3} depending on parameter $\epsilon > 0$, 
			\begin{equation}
				\begin{cases}
					\partial_t n_1^\epsilon - \Delta n_1^\epsilon = n_1^\epsilon f_1^\epsilon(n_1^\epsilon,n_2^\epsilon) & \text{ in } (0,T) \times \Omega, \\
					\partial_t n_2^\epsilon - \Delta n_2^\epsilon = n_2^\epsilon f_2^\epsilon(n_1^\epsilon,n_2^\epsilon) & \text{ in } (0,T) \times \Omega, \\
					n_1^\epsilon(0,\cdot) = n_1^{\text{init},\epsilon}, \quad n_2^\epsilon(0,\cdot) = n_2^{\text{init},\epsilon} & \text{ in } \Omega,\\
					\frac{\partial n_1^\epsilon}{\partial \nu} = -D(n_1^\epsilon - n_1^{\text{ext},\epsilon}), \quad \frac{\partial n_2^\epsilon}{\partial \nu} = -D(n_2^\epsilon - n_2^{\text{ext},\epsilon}) & \text{ on } (0,T) \times \partial \Omega ,\\
				\end{cases}
			\label{eqn:pb3}
			\end{equation}
		where we assume that $f_1^\epsilon, f_2^\epsilon$ are smooth enough to guarantee existence and uniqueness of a classical solution for fixed $\epsilon$. More precisely, the following assumptions are made:
		\begin{assumption}{\normalfont (Initial and boundary conditions).}
			\label{a1}
			$n_1^{\text{init},\epsilon}, n_2^{\text{init},\epsilon} \in L^\infty(\Omega)$ with $ n_1^{\text{init},\epsilon}, n_2^{\text{init},\epsilon} \geq 0$ and $n_2^{\text{init},\epsilon}$ is not identical to $0$. 
			
			$D > 0$ is constant, $n_1^{\text{ext},\epsilon} \geq 0,n_2^{\text{ext},\epsilon} > 0$ do not depend on time $t$ and position $x$.
		\end{assumption}
	To study the limit problem, we define the "rescaled total population" $n^\epsilon$ and proportion $p^\epsilon$, by
	\begin{equation}
		n^\epsilon := \dfrac{1}{\epsilon} - n_1^\epsilon - n_2^\epsilon, \quad p^\epsilon = \dfrac{n_1^\epsilon}{n_1^\epsilon + n_2^\epsilon}.
	\end{equation}
	Next, we recall some assumptions that were proposed in \cite{STR16} on the families of functions $(f_1^\epsilon, f_2^\epsilon)_{\epsilon > 0}$ to study the convergence of $p^\epsilon$ when $\epsilon \rightarrow 0$
	\begin{assumption}
		\label{a2}
		Function $f_1^\epsilon, f_2^\epsilon$ are of class $\mathcal{C}^2(\mathbb{R}^2_+ \{ 0\})$, and for $i \in \{1,2\}$ there exists $F_i \in \mathcal{C}^2(\mathbb{R}^2)$ (independent of $\epsilon$) such that 
		\begin{equation}
			f_i^\epsilon(n_1^\epsilon,n_2^\epsilon) = F_i(n^\epsilon,p^\epsilon).
		\end{equation}
	\end{assumption}
	That is, we may write $f_i^\epsilon(n_1^\epsilon,n_2^\epsilon) = F_i\left(\frac{1}{\epsilon} - n_1^\epsilon - n_2^\epsilon, \frac{n_1^\epsilon}{n_1^\epsilon + n_2^\epsilon}\right)$ for $i \in \{1,2\}$. 
	
	Then, we can deduce that $p^\epsilon$ and $n^\epsilon$ satisfy the following system 
	\begin{equation}
		\begin{cases}
			\partial_t n^\epsilon - \Delta n^\epsilon = -(\frac{1}{\epsilon} - n^\epsilon) \left[p^\epsilon F_1(n^\epsilon,p^\epsilon) + (1-p^\epsilon)F_2(n^\epsilon,p^\epsilon)\right] & \text{ in } (0,T) \times \Omega, \\
			\partial_t p^\epsilon - \Delta p^\epsilon + \frac{2\epsilon A}{1- \epsilon n^\epsilon} \nabla p^\epsilon \cdot \nabla n^\epsilon = p^\epsilon(1-p^\epsilon)(F_1-F_2)(n^\epsilon,p^\epsilon) & \text{ in } (0,T) \times \Omega, \\
			n^\epsilon(0,\cdot) = n^{\text{init},\epsilon}, \quad p^\epsilon(0,\cdot) = p^{\text{init},\epsilon} & \text{ in } \Omega,\\
			\frac{\partial n^\epsilon}{\partial \nu} = -D(n^\epsilon - n^{\text{ext},\epsilon}) & \text{ on } (0,T) \times \partial \Omega ,\\
			\frac{\partial p^\epsilon}{\partial \nu} = -D(p^\epsilon - p^{\text{ext},\epsilon})\frac{1 - \epsilon n^{\text{ext},\epsilon}}{1 - \epsilon n^{\epsilon}}& \text{ on } (0,T) \times \partial \Omega ,\\
		\end{cases}
	\label{eqn:pb4}
	\end{equation}
	where $(F_1-F_2)(n^\epsilon,p^\epsilon) = F_1(n^\epsilon,p^\epsilon) - F_2(n^\epsilon,p^\epsilon)$, and 
	\begin{equation}
	    n^{\text{init},\epsilon} := \dfrac{1}{\epsilon} - n_1^{\text{init},\epsilon} - n_2^{\text{init},\epsilon}, \quad p^{\text{init},\epsilon} := \dfrac{n_1^{\text{init},\epsilon}}{n_1^{\text{init},\epsilon} + n_2^{\text{init},\epsilon}},
	\end{equation}
	\begin{equation}
	    n^{\text{ext},\epsilon} := \dfrac{1}{\epsilon} - n_1^{\text{ext},\epsilon} - n_2^{\text{ext},\epsilon}, \quad p^{\text{ext},\epsilon} := \dfrac{n_1^{\text{ext},\epsilon}}{n_1^{\text{ext},\epsilon} + n_2^{\text{ext},\epsilon}}.
	\end{equation}
	
	Let us denote $H(n,p) = -p F_1(n,p) - (1-p)F_2(n,p)$. The following assumption guarantees existence of zeros of $H$ given by $(n, p) = (h(p),p)$ for each $p\in [0,1]$. 
	\begin{assumption} 
		\label{a3}
		In addition to Assumption \ref{a2}, 
		
		(i) $\exists B > 0$ such that $\forall n \geq 0, \forall p \in [0,1]$, $\partial_n H(n,p) \leq -B$,
		
		(ii) $\forall p >0 $, $H(0,p) > 0$.
	\end{assumption}
	Conditions (i) and (ii) imply that for all $p \in [0,1]$, there exists a unique $n =: h(p) \in \mathbb{R}_+^*$ such that $H(n,p) = 0$.  We have $H \in \mathcal{C}^2(\mathbb{R}^2_+)$ (from \cref{a2}) thus $h \in \mathcal{C}^2(0,1)$, with $H(h(p),p)= 0$ for all $p \in [0,1]$.  
	
    The following assumptions are made for the initial data and boundary conditions
	\begin{assumption}
		\label{a4}
		There exists a function $p^\text{init} \in L^2(\Omega)$ such that $p^{\text{init}, \epsilon} \displaystyle \underset{\epsilon \rightarrow 0}{\rightharpoonup}   p^\text{init}$ weakly in $L^2(\Omega)$. 
		Function $n^{\text{init},\epsilon} - h(0) \in L^2 \cap L^\infty(\Omega)$ is uniformly bounded in $\epsilon > 0$. 
	\end{assumption}
	\begin{assumption}
		\label{a5}
		There exists positive constants $\tilde{\epsilon} > 0, \tilde{K} > 0$ such that for any $\epsilon \in (0, \tilde{\epsilon})$, we have $|n^{\text{ext},\epsilon}| < \widetilde{K}$. 
		
		There exists a constant $p^\text{ext} \in (0,1)$ not depending on $\epsilon$ such that $p^{\text{ext},\epsilon} \underset{\epsilon \rightarrow 0}{\rightarrow} p^\text{ext}$ 
	\end{assumption}
	
    \noindent {\bf Convergence result.} For fixed $\epsilon > 0$, existence of solutions of \cref{eqn:pb4} is classical (see, e.g. \cite{PER}). Following the idea in \cite{STR16}, we present the asymptotic limit of the proportion $p^\epsilon$ and $n^\epsilon$ in the following theorem.
	
	\begin{theorem}
		\label{convergence}
		Assume that Assumptions \ref{a1}-\ref{a5} are satisfied and consider the solution $(n^\epsilon, p^\epsilon)$ of \cref{eqn:pb4}. Then, for all $T > 0$, we have the convergence
		
		$ \begin{cases}
			p^\epsilon \xrightarrow[\epsilon \rightarrow 0]{} p^0 \text{ strongly in } L^2(0,T;L^2(\Omega)), \text{ weakly in } L^2(0,T;H^1(\Omega)), \\
			n^\epsilon - h(p^\epsilon) \xrightarrow[\epsilon \rightarrow 0]{} 0 \text{ strongly in } L^2(0,T;L^2(\Omega)), \text{ weakly in } L^2(0,T;H^1(\Omega)),
		\end{cases}
		$
		
	\noindent where $p^0$ is the unique solution of 
	\begin{equation}
		\begin{cases}
			\partial_t p^0 - \Delta p^0
			= p^0(1-p^0)(F_1-F_2)(h(p^0),p^0), & \text{ in } (0,T) \times \Omega, \\
			p^0(0,\cdot) = p^\text{init} & \text{ in } \Omega \\
		    \frac{\partial p^0}{\partial \nu} = -D(p^0 - p^\text{ext}) & \text{ on } (0,T)\times \partial  \Omega.
		\end{cases}
	\end{equation}
	\end{theorem}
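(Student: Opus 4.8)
The plan is to treat the limit $\epsilon\to0$ as a fast-reaction (stiff) singular limit, following the relative-compactness strategy of \cite{STR16} but paying particular attention to the Robin boundary, which is the genuinely new ingredient here. Fix $T>0$ and work throughout with the reformulated system \cref{eqn:pb4} for $(n^\epsilon,p^\epsilon)$. First I would establish the uniform \emph{a priori} estimates that do not degenerate as $\epsilon\to0$: since $p^\epsilon$ is a proportion, $0\le p^\epsilon\le1$, so $p^\epsilon$ is bounded in $L^\infty$; a comparison argument for the $n^\epsilon$-equation — using that $\tfrac1\epsilon-n^\epsilon=n_1^\epsilon+n_2^\epsilon\ge0$ together with the dissipativity of $H$ from \cref{a3} and the data bounds of \cref{a4,a5} — gives a uniform $L^\infty$ bound on $n^\epsilon$; and testing the $n^\epsilon$- and $p^\epsilon$-equations against $n^\epsilon$ and $p^\epsilon$, the Robin boundary terms being absorbed by the trace inequality, yields uniform bounds for both in $L^2(0,T;H^1(\Omega))$.

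The cornerstone is the \emph{stiff estimate}
\[
\frac1\epsilon\,\bigl\| n^\epsilon - h(p^\epsilon)\bigr\|_{L^2((0,T)\times\Omega)}^2 \le C,
\]
uniform in $\epsilon$. It follows from the $n^\epsilon$-equation and the strict monotonicity $\partial_n H\le -B$ of \cref{a3}, which forces $H(n,p)\,(n-h(p))\le -B\,(n-h(p))^2$: the singular reaction term is thus dissipative and penalises the distance of $n^\epsilon$ to the slow manifold $\{n=h(p)\}$. Consequently $n^\epsilon-h(p^\epsilon)\to0$ strongly in $L^2(0,T;L^2(\Omega))$ at rate $\sqrt\epsilon$, while the $H^1$-bounds give the weak $L^2(0,T;H^1)$ statement. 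I would then derive a uniform bound on $\partial_t p^\epsilon$ in a negative-order space such as $L^2(0,T;(H^1(\Omega))')$; the cross term $\tfrac{2\epsilon A}{1-\epsilon n^\epsilon}\nabla p^\epsilon\cdot\nabla n^\epsilon$ is harmless because it carries an explicit factor $\epsilon$ while $\nabla p^\epsilon$ and $\nabla n^\epsilon$ are bounded in $L^2$.

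With $p^\epsilon$ bounded in $L^2(0,T;H^1)$ and $\partial_t p^\epsilon$ bounded in the dual space, the Aubin--Lions lemma extracts a subsequence with $p^\epsilon\to p^0$ strongly in $L^2(0,T;L^2(\Omega))$ and weakly in $L^2(0,T;H^1(\Omega))$, and $0\le p^0\le1$. Since $h$ is continuous and $n^\epsilon-h(p^\epsilon)\to0$, the strong convergence of $p^\epsilon$ propagates to $n^\epsilon\to h(p^0)$ strongly in $L^2(L^2)$. I would then pass to the limit in the weak formulation of the $p^\epsilon$-equation: the diffusion term passes by weak $H^1$-convergence, the cross term vanishes by its $\epsilon$-prefactor, and $p^\epsilon(1-p^\epsilon)(F_1-F_2)(n^\epsilon,p^\epsilon)$ converges to $p^0(1-p^0)(F_1-F_2)(h(p^0),p^0)$ by continuity of $F_1-F_2$, the $L^\infty$ bounds and dominated convergence.

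The hard part is the passage to the limit in the Robin boundary condition, which is exactly where the trace theorem enters, since weak $H^1$-convergence alone does not control traces. I would upgrade the compactness: interpolating the strong $L^2(L^2)$ and bounded $L^2(H^1)$ information against the bound on $\partial_t p^\epsilon$ gives, via Aubin--Lions with an intermediate space $H^s(\Omega)$, $\tfrac12<s<1$, strong convergence of $p^\epsilon$ in $L^2(0,T;H^s(\Omega))$; as the trace map $H^s(\Omega)\to L^2(\partial\Omega)$ is continuous for $s>\tfrac12$, the boundary values converge strongly in $L^2((0,T)\times\partial\Omega)$. The $\epsilon$-dependent factor $\tfrac{1-\epsilon n^{\text{ext},\epsilon}}{1-\epsilon n^\epsilon}$ tends to $1$ using the trace bound on $n^\epsilon$ and \cref{a5}, and $p^{\text{ext},\epsilon}\to p^\text{ext}$, so the limiting condition $\tfrac{\partial p^0}{\partial\nu}=-D(p^0-p^\text{ext})$ is recovered. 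Finally, since the limit scalar problem has a unique solution, the whole family $(n^\epsilon,p^\epsilon)$ — not merely a subsequence — converges, which closes the argument.
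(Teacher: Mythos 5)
Your proposal is correct and follows the same overall route as the paper: uniform $L^\infty$ bounds on $p^\epsilon$ and $n^\epsilon$ (including on the boundary), gradient estimates, the dissipative ``stiff'' estimate forcing $n^\epsilon-h(p^\epsilon)\to 0$ via $\partial_n H\le -B$ from \cref{a3}, a dual-space bound on $\partial_t p^\epsilon$, Aubin--Lions compactness, passage to the limit in the weak formulation of \cref{eqn:pb4}, and uniqueness of the limit problem to upgrade subsequential convergence to convergence of the whole family. The one place you genuinely diverge is the boundary term: you propose to obtain \emph{strong} convergence of the traces by running Aubin--Lions with an intermediate space $H^s(\Omega)$, $\tfrac12<s<1$, and then using continuity of the trace map from $H^s$; the paper is more economical, observing that the trace theorem plus the uniform $L^2(0,T;H^1(\Omega))$ bound gives \emph{weak} $L^2((0,T)\times\partial\Omega)$ compactness of $\gamma(p^\epsilon)$, identifying the weak limit as $\gamma(p^0)$ through Green's formula, and noting that weak convergence suffices because the boundary integrand is linear in $p^\epsilon$ up to the factor $\frac{1-\epsilon n^{\text{ext},\epsilon}}{1-\epsilon n^\epsilon}$, which converges uniformly to $1$ thanks to the boundary $L^\infty$ bound on $n^\epsilon$. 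Your stronger trace convergence is valid and would also work, at the cost of an extra interpolation step. One caveat: your claim that testing the $n^\epsilon$-equation yields a uniform $L^2(0,T;H^1(\Omega))$ bound on $n^\epsilon$ is stronger than what the energy estimate actually delivers; as in \cite{STR16} and \cref{H1}, one only gets $\epsilon\int_0^T\int_\Omega|\nabla n^\epsilon|^2\,dx\,dt\le C$, i.e.\ a bound on $\sqrt{\epsilon}\,\nabla n^\epsilon$. This does not harm your argument for $p^0$ (the cross term still vanishes since it carries the prefactor $\epsilon$ and each gradient costs at most $\epsilon^{-1/2}$), but you should not rely on an unweighted $H^1$ bound for $n^\epsilon$.
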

	
	We recall the apriori estimates of \cite{STR16} without proof and present some bounds on the boundary in \cref{estimate}. Then we use the Aubin-Lions lemma and trace theorem to prove the limit in \cref{limit}.  
	\subsection{Uniform a priori estimates}
    \label{estimate}
    First, we establish the uniform bound with respect to $\epsilon$ in $L^\infty$ in the following lemma
    \begin{lemma}
    	\label{Linfty}
    	Under Assumptions \ref{a1}-\ref{a5}, for a given value $\epsilon > 0$, let $(n^\epsilon, p^\epsilon)$ be the unique solution of \cref{eqn:pb4}. Then, for any $T > 0$, $0 \leq p^\epsilon \leq 1$ in $[0,T] \times \overline{\Omega}$ for all $\epsilon > 0$. Also, there exists $\epsilon_0 > 0, K_0 > 0$ such that for any $\epsilon \in (0,\epsilon_0)$, $|| n^\epsilon||_{L^\infty([0,T] \times \Omega)} \leq K_0$. 
    	
    	Moreover, $n^\epsilon$ is uniformly bounded on $[0,T] \times \partial \Omega$. 
    \end{lemma}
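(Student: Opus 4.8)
The plan is to prove the two bounds in sequence: first pin down $p^\epsilon$, then use $p^\epsilon \in [0,1]$ as a known datum to control $n^\epsilon$ by a comparison argument, and finally read off the boundary bound from continuity of the classical solution.

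First I would establish $0 \le p^\epsilon \le 1$ by comparison on the second line of \cref{eqn:pb4}. The reaction term $p^\epsilon(1-p^\epsilon)(F_1-F_2)(n^\epsilon,p^\epsilon)$ vanishes at $p=0$ and $p=1$, and the advective coefficient $\frac{2\epsilon A}{1-\epsilon n^\epsilon}\nabla n^\epsilon$ is a smooth, bounded field on $[0,T]\times\overline\Omega$ for the given classical solution (in particular $1-\epsilon n^\epsilon = \epsilon(n_1^\epsilon+n_2^\epsilon)>0$, so the Robin coefficient $c:=D\frac{1-\epsilon n^{\text{ext},\epsilon}}{1-\epsilon n^\epsilon}$ is positive). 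Hence the constants $0$ and $1$ are ordered sub- and super-solutions: the interior inequalities hold with equality, while on the boundary one checks $0\le c\,p^{\text{ext},\epsilon}$ and $c\,(1-p^{\text{ext},\epsilon})\ge 0$, using $p^{\text{ext},\epsilon}\in[0,1)$. The comparison principle for semilinear parabolic problems (as in \cite{PAO}) then gives $0\le p^\epsilon\le 1$ on $[0,T]\times\overline\Omega$, uniformly in $\epsilon$. (Alternatively, this is immediate from $p^\epsilon=n_1^\epsilon/(n_1^\epsilon+n_2^\epsilon)$ once $n_1^\epsilon,n_2^\epsilon\ge 0$ is shown by the same maximum principle on \cref{eqn:pb3}.)

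With $p^\epsilon(t,x)\in[0,1]$ now fixed, I would treat the first line of \cref{eqn:pb4} as a scalar equation for $n^\epsilon$ with reaction term $(\tfrac1\epsilon-n^\epsilon)\,H(n^\epsilon,p^\epsilon)$. By \cref{a3}, $n\mapsto H(n,p)$ is strictly decreasing with unique positive zero $n=h(p)$, and $H(n,p)>0$ for $n<h(p)$, $H(n,p)<0$ for $n>h(p)$; since $h$ extends continuously to $[0,1]$, set $h_{\max}:=\max_{[0,1]}h$ and $h_{\min}:=\min_{[0,1]}h>0$. I then seek constant barriers. Choose $\overline N$ exceeding $h_{\max}$, the uniform bound $\widetilde K$ on $|n^{\text{ext},\epsilon}|$ from \cref{a5}, and the uniform $L^\infty$ bound on $n^{\text{init},\epsilon}$ from \cref{a4}; choose $\underline N$ correspondingly below $h_{\min}$, below $-\widetilde K$, and below $n^{\text{init},\epsilon}$. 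For $\epsilon$ small enough that $\overline N<1/\epsilon$, the factor $\tfrac1\epsilon-\overline N$ is positive while $H(\overline N,p^\epsilon)<0$ (as $\overline N>h_{\max}\ge h(p^\epsilon)$), so
\[
\partial_t\overline N-\Delta\overline N=0\ \ge\ \Big(\tfrac1\epsilon-\overline N\Big)H(\overline N,p^\epsilon),
\]
and the Robin condition reduces to $\overline N\ge n^{\text{ext},\epsilon}$, which holds. Thus $\overline N$ is a super-solution, and symmetrically $\underline N$ a sub-solution; comparison yields $\underline N\le n^\epsilon\le\overline N$, whence $\|n^\epsilon\|_{L^\infty([0,T]\times\Omega)}\le K_0:=\max(|\overline N|,|\underline N|)$.

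The delicate point, and the main obstacle, is the uniformity in $\epsilon$: the reaction carries the large factor $\tfrac1\epsilon-n^\epsilon=\epsilon^{-1}(n_1^\epsilon+n_2^\epsilon)$, yet the thresholds $\overline N,\underline N$ depend only on $h_{\max},h_{\min},\widetilde K$ and the initial bound of \cref{a4}, all independent of $\epsilon$. The sole effect of the large factor is, through the sign of $H$, to \emph{reinforce} the confinement of $n^\epsilon$ to $[\underline N,\overline N]$ rather than to spoil the bound; this is precisely why a threshold $\epsilon_0:=1/\overline N$ appears, since one needs $\overline N<1/\epsilon$ to keep $\tfrac1\epsilon-\overline N$ positive. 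One must also verify that constant barriers are compatible with the inhomogeneous Robin conditions, which is exactly where the uniform bound of \cref{a5} enters. Finally, since $(n^\epsilon,p^\epsilon)$ is a classical solution, $n^\epsilon$ is continuous up to $\overline\Omega$, so the interior bound passes to the trace and gives the claimed uniform bound of $n^\epsilon$ on $[0,T]\times\partial\Omega$ by the same constant $K_0$.
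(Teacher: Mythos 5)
Your proposal is correct in substance and, for the interior bounds, coincides with what the paper actually does: the paper simply invokes ``the same method as in Lemma~5 of \cite{STR16}'', and your comparison argument with the constants $0,1$ for $p^\epsilon$ and with constant barriers $\underline N\le n^\epsilon\le\overline N$ built from $h$, $\widetilde K$ and the initial bound is precisely that method, with the one genuinely new ingredient (checking that constant barriers are admissible for the inhomogeneous Robin condition, which is where \cref{a5} enters) correctly identified. Where you diverge is the last claim of \cref{Linfty}: you pass the interior $L^\infty$ bound to $\partial\Omega$ by continuity of the classical solution, whereas the paper extracts the boundary bound directly from the Robin condition, writing the normal derivative as a difference quotient $\frac{n^\epsilon(t,x)-n^\epsilon(t,x-\delta\nu)}{\delta}$ and solving for $n^\epsilon(t,x)$ to get $|n^\epsilon(t,x)|\le K_0+\delta D\widetilde K+\delta\eta$. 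Your route is shorter and is legitimate given classical regularity up to $\overline\Omega$ (indeed the comparison principle already gives $\underline N\le n^\epsilon\le\overline N$ on the closed cylinder); the paper's route is more robust in that it does not presuppose that the quoted interior estimate controls boundary values, which is presumably why the authors, importing the bound from the whole-space setting of \cite{STR16}, argue the way they do. One caveat on your barrier construction: \cref{a3} only asserts $\partial_n H\le -B$ for $n\ge 0$ and $H(0,p)>0$, so if $\underline N$ must be taken negative (which \cref{a4} and \cref{a5} do not exclude), the sign $H(\underline N,p)\ge 0$ you need for the sub-solution is not literally guaranteed by the stated hypotheses; this gap is inherited from the cited lemma rather than introduced by you, but it deserves a sentence (e.g.\ extending the monotonicity of $H$ to $n\le 0$ or truncating) in a complete write-up.
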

    \begin{proof}
    Using the same method as in Lemma 5 of \cite{STR16}, we obtain the uniform bounds for $p^\epsilon$ in $[0,T] \times \overline{\Omega}$, and for $n^\epsilon$ in $L^\infty([0,T] \times \Omega)$. 
    
	Moreover, for any $ x \in \partial \Omega$, let $\nu$ be the normal outward vector through $x$. Then, for $\delta > 0$ small enough, $x - \delta \nu \in \Omega$. From the boundary condition for $n^\epsilon$ in \cref{eqn:pb4}, one has for $t \in [0,T]$, $\displaystyle \lim_{\delta \rightarrow 0^+} \dfrac{n^\epsilon(t,x) - n^\epsilon(t,x-\delta \nu)}{\delta} = -D(n^\epsilon(t,x) - n^{\text{ext},\epsilon})$.  
	
	So for any $\eta > 0$, there exists $\delta > 0$ small such that 
	\begin{center}
	    $\left|\dfrac{n^\epsilon(t,x) - n^\epsilon(t,x-\delta \nu)}{\delta} + D(n^\epsilon(t,x) - n^{\text{ext},\epsilon})\right| \leq \eta$.
	\end{center}
	
	Thus, $n^\epsilon(t,x) (1+\delta D) \leq n^\epsilon(t,x-\delta \nu) + \delta D n^{\text{ext},\epsilon} + \delta \eta$, then for $\eta$ and $\delta$ small enough, for any $\epsilon < \epsilon_0$, $t \in [0,T], x \in \partial \Omega $, since $x - \delta\nu \in \Omega$, one has $|n^\epsilon(t,x)| \leq K_0 + \delta D \widetilde{K} + \delta \eta < K_1$. Then $n^\epsilon$ is uniformly bounded on $[0,T] \times \partial \Omega$ and $||n^\epsilon||_{L^\infty([0,T] \times \partial \Omega)} \leq K_1$. 
    \end{proof}
    
    The following lemmas can be proved analogously to the proof in \cite{STR16}. 
    \begin{lemma}
    	\label{H1}
    	Under Assumptions \ref{a1}-\ref{a5}, for $\epsilon > 0$ small enough, let $(n^\epsilon, p^\epsilon)$ be the unique solution of \cref{eqn:pb4}. We have the following uniform estimates 
    	\begin{equation}
    		\displaystyle \epsilon \int_{0}^{T} \int_\Omega |\nabla n^\epsilon|^2 dxdt \leq C_0,\\
    		\displaystyle \int_{0}^{T} \int_\Omega |\nabla p^\epsilon|^2 dxdt \leq \overline{C},
    		\label{eqn:pepsilon}
    	\end{equation}
    for some positive constants $C_0$ and $\overline{C}$. 
    \end{lemma}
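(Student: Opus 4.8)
The plan is to establish the two bounds in sequence: first the weighted gradient estimate for $n^\epsilon$, which costs only the $L^\infty$ bounds already in hand, and then feed it into a standard energy estimate for $p^\epsilon$, where the single genuinely coupled term is tamed precisely by the smallness factor $\epsilon$ it carries. Throughout I would fix $\epsilon<\epsilon_0$ so that, by \cref{Linfty}, $1-\epsilon n^\epsilon\ge 1-\epsilon_0 K_0>0$ stays bounded below, keeping all the quotients in \cref{eqn:pb4} harmless.

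For the estimate on $n^\epsilon$, recall from \cref{eqn:pb4} that $\partial_t n^\epsilon-\Delta n^\epsilon=(\tfrac1\epsilon-n^\epsilon)H(n^\epsilon,p^\epsilon)$ with $H(n,p)=-pF_1(n,p)-(1-p)F_2(n,p)$ and $\partial_\nu n^\epsilon=-D(n^\epsilon-n^{\text{ext},\epsilon})$. First I would multiply this equation by $\epsilon\,n^\epsilon$ and integrate over $\Omega$. The diffusion term produces $\epsilon\|\nabla n^\epsilon\|_{L^2}^2$ together with the boundary contribution $\epsilon D\int_{\partial\Omega}(n^\epsilon-n^{\text{ext},\epsilon})\,n^\epsilon$, while the reaction term becomes $\int_\Omega(1-\epsilon n^\epsilon)H(n^\epsilon,p^\epsilon)\,n^\epsilon$. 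Since \cref{Linfty} gives $0\le p^\epsilon\le1$, a uniform bound $\|n^\epsilon\|_{L^\infty([0,T]\times\Omega)}\le K_0$ and a uniform bound of $n^\epsilon$ on $[0,T]\times\partial\Omega$, and since $F_1,F_2$ are $\mathcal{C}^2$ and thus bounded on the relevant compact range (using also $|n^{\text{ext},\epsilon}|<\widetilde{K}$ from Assumption \ref{a5}), every term on the right is bounded uniformly in $\epsilon$. Integrating in time, absorbing the time derivative, and using that $\tfrac\epsilon2\|n^{\text{init},\epsilon}\|_{L^2}^2$ stays bounded because $n^{\text{init},\epsilon}$ is uniformly bounded (Assumption \ref{a4}), I would conclude $\epsilon\int_0^T\|\nabla n^\epsilon\|_{L^2}^2\,dt\le C_0$.

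For the estimate on $p^\epsilon$, I would test the second equation of \cref{eqn:pb4} against $p^\epsilon$. Integration by parts yields on the left $\tfrac12\frac{d}{dt}\|p^\epsilon\|_{L^2}^2+\|\nabla p^\epsilon\|_{L^2}^2$, and on the right the boundary term $D\int_{\partial\Omega}(p^\epsilon-p^{\text{ext},\epsilon})\frac{1-\epsilon n^{\text{ext},\epsilon}}{1-\epsilon n^\epsilon}p^\epsilon$, the reaction term $\int_\Omega(p^\epsilon)^2(1-p^\epsilon)(F_1-F_2)(n^\epsilon,p^\epsilon)$, and the drift term $\int_\Omega\frac{2\epsilon A}{1-\epsilon n^\epsilon}p^\epsilon\,\nabla p^\epsilon\cdot\nabla n^\epsilon$. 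The boundary and reaction terms are bounded immediately via $0\le p^\epsilon\le1$, the lower bound on $1-\epsilon n^\epsilon$, and the boundedness of $F_1-F_2$. For the drift term, which couples the two gradients, I would bound the prefactor by $\bigl|\frac{2\epsilon A}{1-\epsilon n^\epsilon}p^\epsilon\bigr|\le C\epsilon$ and apply Young's inequality,
\[
\left|\int_\Omega\frac{2\epsilon A}{1-\epsilon n^\epsilon}\,p^\epsilon\,\nabla p^\epsilon\cdot\nabla n^\epsilon\right|\le\tfrac12\|\nabla p^\epsilon\|_{L^2}^2+C\epsilon^2\|\nabla n^\epsilon\|_{L^2}^2 .
\]
The first piece is absorbed into the left-hand side; the second, integrated in time, equals $C\epsilon\bigl(\epsilon\int_0^T\|\nabla n^\epsilon\|_{L^2}^2\,dt\bigr)\le C\epsilon\,C_0$, which is controlled exactly by the first step. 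Integrating in time and discarding the nonnegative $\tfrac12\|p^\epsilon(T)\|_{L^2}^2$ then gives $\int_0^T\|\nabla p^\epsilon\|_{L^2}^2\,dt\le\overline{C}$.

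I expect the drift term to be the main obstacle, since it is the only place where the two unknowns' gradients interact and where no $L^\infty$ bound alone suffices. The argument closes only because that term carries the factor $\epsilon$, which, combined with the \emph{weak} bound $\epsilon\int|\nabla n^\epsilon|^2\le C_0$, leaves an $\epsilon$-small remainder that does not degrade the estimate. Consequently the ordering of the two estimates is essential: the $n$-bound must precede the $p$-bound, and one must keep $\epsilon<\epsilon_0$ so that $1-\epsilon n^\epsilon$ remains uniformly positive.
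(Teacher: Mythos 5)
Your proof is correct and is essentially the intended argument: the paper omits the proof of this lemma by deferring to the energy estimates of \cite{STR16} (multiplying the $n$-equation by $\epsilon n^\epsilon$ and the $p$-equation by $p^\epsilon$, then absorbing the $\epsilon$-weighted drift term via Young's inequality), and your version reproduces exactly that scheme. The only genuinely new ingredient in the Robin setting is the control of the boundary integrals, which you handle correctly using the uniform boundary bound on $n^\epsilon$ from \cref{Linfty} together with Assumptions \ref{a4} and \ref{a5}.
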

    
    Denote $M^\epsilon := n^\epsilon - h(p^\epsilon)$ where $h$ is defined in \cref{a3}. The following provide the convergence of $M^\epsilon$.
    \begin{lemma}
    \label{nconverge}
        Let $T > 0$, under Assumptions \ref{a1}-\ref{a5}, one has $M^\epsilon \displaystyle \rightarrow 0 $ in $L^2(0,T; L^2(\Omega)) $ when $\epsilon \rightarrow 0$. 
    \end{lemma}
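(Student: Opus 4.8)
The plan is to derive a quantitative $L^2$ bound on $M^\epsilon$ that is forced to vanish as $\epsilon\to0$ by the strong monotonicity of $H$ together with the a priori estimates of \cref{Linfty} and \cref{H1}. First I would rewrite the $n^\epsilon$-equation of \cref{eqn:pb4} as $\partial_t n^\epsilon - \Delta n^\epsilon = (\tfrac1\epsilon - n^\epsilon)H(n^\epsilon,p^\epsilon)$, with $H(n,p)=-pF_1(n,p)-(1-p)F_2(n,p)$. Since $H(h(p^\epsilon),p^\epsilon)=0$ and $\partial_n H\le -B$ by \cref{a3}, the mean value theorem gives $H(n^\epsilon,p^\epsilon)=\partial_n H(\xi^\epsilon,p^\epsilon)\,M^\epsilon$ with $\partial_n H(\xi^\epsilon,p^\epsilon)\le -B$, hence the coercivity relation $H(n^\epsilon,p^\epsilon)\,M^\epsilon\le -B\,|M^\epsilon|^2$. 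Multiplying the equation by $\epsilon M^\epsilon$ and integrating over $(0,T)\times\Omega$, and using that $1-\epsilon n^\epsilon\ge\tfrac12$ for $\epsilon$ small (thanks to the $L^\infty$ bound of \cref{Linfty}), the right-hand side is bounded above by $-\tfrac B2\int_0^T\int_\Omega|M^\epsilon|^2$. It therefore suffices to show that the left-hand side $\epsilon\int_0^T\int_\Omega(\partial_t n^\epsilon-\Delta n^\epsilon)M^\epsilon$ tends to $0$.

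For the diffusion part, an integration by parts in space gives $\epsilon\int\int\Delta n^\epsilon M^\epsilon=-\epsilon\int\int\nabla n^\epsilon\cdot\nabla M^\epsilon+\epsilon\int\int_{\partial\Omega}\tfrac{\partial n^\epsilon}{\partial\nu}M^\epsilon$. Writing $\nabla M^\epsilon=\nabla n^\epsilon-h'(p^\epsilon)\nabla p^\epsilon$, the term $-\epsilon\int\int|\nabla n^\epsilon|^2$ has a favourable sign, while the cross term $\epsilon\int\int h'(p^\epsilon)\nabla n^\epsilon\cdot\nabla p^\epsilon$ is the one to watch. The key observation is that \cref{H1} controls only $\epsilon\|\nabla n^\epsilon\|_{L^2}^2\le C_0$, so $\|\nabla n^\epsilon\|_{L^2}\le\sqrt{C_0/\epsilon}$; combined with $\|\nabla p^\epsilon\|_{L^2}\le\sqrt{\overline C}$ and the prefactor $\epsilon$, the Cauchy--Schwarz inequality yields a bound of order $\sqrt\epsilon$. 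The boundary contribution is then treated with the Robin condition $\tfrac{\partial n^\epsilon}{\partial\nu}=-D(n^\epsilon-n^{\text{ext},\epsilon})$, the uniform boundary bounds of \cref{Linfty} and \cref{a5}, and the continuity of $h$, so it is $O(\epsilon)$.

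The genuinely delicate term is the time derivative $-\epsilon\int\int\partial_t n^\epsilon M^\epsilon$, for which no direct estimate on $\partial_t n^\epsilon$ is available; I expect this to be the main obstacle. I would resolve it by converting the integrand into a total time derivative plus a term carrying $\partial_t p^\epsilon$: since $M^\epsilon=n^\epsilon-h(p^\epsilon)$, one has $\partial_t n^\epsilon\,M^\epsilon=\partial_t\big(\tfrac12(n^\epsilon)^2-n^\epsilon h(p^\epsilon)\big)+n^\epsilon h'(p^\epsilon)\partial_t p^\epsilon$. The total-derivative part integrates to time-boundary values that are uniformly bounded in $L^\infty$, hence contribute $O(\epsilon)$ after multiplication by $\epsilon$. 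For the remaining term I would substitute the $p^\epsilon$-equation of \cref{eqn:pb4} for $\partial_t p^\epsilon$: the reaction term is bounded and contributes $O(\epsilon)$; the drift term $\tfrac{2\epsilon A}{1-\epsilon n^\epsilon}\nabla p^\epsilon\cdot\nabla n^\epsilon$ carries an extra $\epsilon$ and is $O(\epsilon^{3/2})$ by the same $\sqrt{C_0/\epsilon}$ argument; and the term $\Delta p^\epsilon$ is integrated by parts in space, producing gradient products controlled as above and a boundary term handled via the Robin condition for $p^\epsilon$. The essential bookkeeping is to check that every cross term involving $\nabla n^\epsilon$ indeed comes with enough powers of $\epsilon$ to beat the $\sqrt{C_0/\epsilon}$ growth. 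Collecting all estimates gives $\tfrac B2\int_0^T\int_\Omega|M^\epsilon|^2\le C\sqrt\epsilon\to0$, which is the claim.
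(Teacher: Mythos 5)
Your argument is correct and is essentially the proof the paper has in mind: the paper omits it, stating only that the lemma "can be proved analogously to the proof in \cite{STR16}", and the STR16 argument is exactly your energy/coercivity estimate (multiply the $n^\epsilon$-equation by $\epsilon M^\epsilon$, use $\partial_n H\le -B$ to get $-\tfrac{B}{2}\|M^\epsilon\|_{L^2}^2$ on the right, and beat the $\|\nabla n^\epsilon\|_{L^2}=O(\epsilon^{-1/2})$ growth with the available powers of $\epsilon$). Your only genuine addition is the treatment of the boundary terms produced by the integrations by parts, which you handle correctly via the Robin conditions together with the uniform boundary bound of \cref{Linfty} and \cref{a5} --- precisely the purpose of the extra boundary estimate in \cref{Linfty}.
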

    Now, we provide a uniform estimate for $\partial_t p^\epsilon$ with respect to $\epsilon$ in the following lemma. 
	\begin{lemma}
		\label{X'}
		Under Assumptions \ref{a1}-\ref{a5}, for $\epsilon > 0$ small enough, $\partial_t p^\epsilon$ is uniformly bounded in $L^2(0,T;X')$ with respect to $\epsilon$, where $ X  = H^1(\Omega) \cap L^\infty(\Omega)$. 
	\end{lemma}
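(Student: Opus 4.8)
The plan is to obtain the bound by duality. I would test the equation for $p^\epsilon$ in \cref{eqn:pb4} against an arbitrary $\varphi \in X = H^1(\Omega) \cap L^\infty(\Omega)$, integrate the Laplacian by parts and substitute the Robin boundary condition, which yields
\begin{multline*}
\langle \partial_t p^\epsilon, \varphi \rangle = -\int_\Omega \nabla p^\epsilon \cdot \nabla \varphi \, dx - \int_{\partial\Omega} D(p^\epsilon - p^{\text{ext},\epsilon}) \dfrac{1 - \epsilon n^{\text{ext},\epsilon}}{1 - \epsilon n^\epsilon} \varphi \, d\sigma \\ - \int_\Omega \dfrac{2\epsilon A}{1-\epsilon n^\epsilon} (\nabla p^\epsilon \cdot \nabla n^\epsilon) \varphi \, dx + \int_\Omega p^\epsilon(1-p^\epsilon)(F_1 - F_2)(n^\epsilon, p^\epsilon) \varphi \, dx .
\end{multline*}
The goal is then to bound each of the four terms on the right-hand side by $\|\varphi\|_X$ times a quantity controlled in $L^2(0,T)$ uniformly in $\epsilon$; taking the supremum over $\|\varphi\|_X \le 1$ and integrating the square in time gives the claim.

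The diffusion, reaction and boundary terms are the routine ones. For the diffusion term, $|\int_\Omega \nabla p^\epsilon \cdot \nabla \varphi| \le \|\nabla p^\epsilon\|_{L^2(\Omega)} \|\varphi\|_{H^1(\Omega)}$, and $\|\nabla p^\epsilon\|_{L^2(\Omega)}$ belongs to $L^2(0,T)$ uniformly in $\epsilon$ by \cref{H1}. For the reaction term, \cref{Linfty} provides $0 \le p^\epsilon \le 1$ together with a uniform $L^\infty$ bound on $n^\epsilon$, so that $(n^\epsilon,p^\epsilon)$ stays in a compact set on which $F_1 - F_2$ is bounded; the integrand is then dominated by $C|\varphi|$ and the term is $\le C\|\varphi\|_{L^2(\Omega)}$, with $C$ independent of $t$ and $\epsilon$. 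For the boundary term, $0 \le p^\epsilon \le 1$, the convergence of $p^{\text{ext},\epsilon}$ (Assumption \ref{a5}), the uniform boundary bound on $n^\epsilon$ and the fact that $1 - \epsilon n^\epsilon$ stays bounded away from $0$ for $\epsilon$ small (again \cref{Linfty}) make the prefactor uniformly bounded, so the term is controlled by $C\int_{\partial\Omega} |\varphi| \, d\sigma \le C \|\varphi\|_{L^2(\partial\Omega)} \le C\|\varphi\|_{H^1(\Omega)}$ by the trace theorem. Each of these three contributions is thus $\|\varphi\|_X$ times an $L^2(0,T)$ function, as required.

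The delicate term, and the one dictating the choice of the space $X$, is the convection term $\int_\Omega \frac{2\epsilon A}{1-\epsilon n^\epsilon}(\nabla p^\epsilon \cdot \nabla n^\epsilon)\varphi$. Since it is only an $L^1$ function of $x$, I would estimate it through the $L^\infty$ component of $X$: with $1 - \epsilon n^\epsilon$ bounded below, it is $\le C\epsilon \|\varphi\|_{L^\infty(\Omega)} \int_\Omega |\nabla p^\epsilon||\nabla n^\epsilon| \, dx \le C \|\varphi\|_{L^\infty(\Omega)}\, \sqrt{\epsilon}\,\|\nabla p^\epsilon\|_{L^2(\Omega)} \cdot ( \sqrt{\epsilon}\,\|\nabla n^\epsilon\|_{L^2(\Omega)} )$ after Cauchy--Schwarz in space. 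The purpose of the $\epsilon$-weighted gradient estimate in \cref{H1} is precisely that $\sqrt{\epsilon}\,\|\nabla n^\epsilon\|_{L^2(\Omega)}$ is bounded in $L^2(0,T)$ by $\sqrt{C_0}$, while $\|\nabla p^\epsilon\|_{L^2(\Omega)}$ is bounded in $L^2(0,T)$ by $\sqrt{\overline{C}}$; the surviving factor $\sqrt{\epsilon}$ then makes this contribution not only harmless but vanishing as $\epsilon \to 0$, consistent with the convection term dropping out of the limit equation. This is the step I expect to be the main obstacle: it is the only place where the $L^\infty$ part of $X$ is genuinely needed, and $\nabla n^\epsilon$ is available only in the $\epsilon$-weighted form, so the two gradient factors must be paired with care so that the $\epsilon$ weight of \cref{H1} is fully exploited. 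Collecting the four estimates and integrating the square in $t$ then yields the uniform bound on $\partial_t p^\epsilon$ in $L^2(0,T;X')$.
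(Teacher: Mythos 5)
Your duality setup is the natural one (the paper itself prints no proof of this lemma and defers to the reference for the whole-space case, so the weak formulation you write down is exactly what has to be estimated), and your treatment of the diffusion, reaction and boundary terms is correct: each is bounded by $\|\varphi\|_X$ times a quantity that is in $L^2(0,T)$ uniformly in $\epsilon$, using \cref{H1}, \cref{Linfty}, \cref{a5} and the trace theorem as you say.

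The gap is in the convection term, precisely the step you flag as the main obstacle. After Cauchy--Schwarz in space you arrive at the pointwise-in-time bound
\begin{equation*}
\left|\int_\Omega \tfrac{2\epsilon A}{1-\epsilon n^\epsilon}\,(\nabla p^\epsilon\cdot\nabla n^\epsilon)\,\varphi\,dx\right| \;\le\; C\,\|\varphi\|_{L^\infty(\Omega)}\,\bigl(\sqrt{\epsilon}\,\|\nabla p^\epsilon(t)\|_{L^2(\Omega)}\bigr)\,\bigl(\sqrt{\epsilon}\,\|\nabla n^\epsilon(t)\|_{L^2(\Omega)}\bigr),
\end{equation*}
and \cref{H1} makes each bracketed factor bounded in $L^2(0,T)$ (the first even with a spare $\sqrt{\epsilon}$). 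But the product of two functions that are merely in $L^2(0,T)$ lies in $L^1(0,T)$, not in $L^2(0,T)$: to ``integrate the square in $t$'' you would need $\|\nabla p^\epsilon\|_{L^2(\Omega)}^2\,\|\nabla n^\epsilon\|_{L^2(\Omega)}^2$ to be integrable in time, and the a priori estimates provide no $L^4$-in-time or $L^\infty$-in-time control of either gradient, so the two $L^1(0,T)$ functions $\|\nabla p^\epsilon\|^2$ and $\epsilon\|\nabla n^\epsilon\|^2$ could in principle concentrate at the same times. What your pairing actually delivers for this term, via Cauchy--Schwarz in time, is a bound (of size $O(\sqrt{\epsilon})$) in $L^1(0,T;X')$ only, while the other three contributions are genuinely in $L^2(0,T;X')$. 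To close the argument you must either produce an extra estimate (for instance a uniform-in-time bound on $\sqrt{\epsilon}\,\|\nabla n^\epsilon(t)\|_{L^2(\Omega)}$, which does not follow from the lemmas as stated, since \cref{a4} imposes no gradient bound on the initial data), or weaken the conclusion to $L^1(0,T;X')$ and invoke the version of the Aubin--Lions--Simon lemma that accepts time derivatives bounded only in $L^1(0,T;X')$; the latter is the standard way out in this class of singular-limit arguments, but it is not the statement you set out to prove.
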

	\subsection{Proof of convergence}
	\label{limit}
	The idea to prove \cref{convergence} is relied on the relative compactness obtained from the Aubin-Lions lemma below (see \cite{SIM}) 
	\begin{lemma}[Aubin-Lions]
		\label{AL}
		Let $T > 0$, $q \in (1,\infty)$, and $(\psi_n)_n$ a bounded sequence in $L^q(0,T;B)$, where $B$ is a Banach space. If $(\psi_n)$ is bounded in $L^q(0,T;X)$ and $X$ embeds compactly in $B$, and if $(\partial_t\psi_n)_n$ is bounded in $L^q(0,T;X')$ uniformly with respect to $n$, then $(\psi_n)_n$ is relatively compact in $L^q(0,T;B)$. 
	\end{lemma}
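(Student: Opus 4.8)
The plan is to prove this as the classical Aubin--Lions--Simon compactness result, exploiting the Gelfand-triple structure $X \hookrightarrow B \hookrightarrow X'$ in which the first embedding is compact and the second continuous. In the application $X = H^1(\Omega)\cap L^\infty(\Omega)$ and $B = L^2(\Omega)$, so that identifying the Hilbert space $B$ with its dual gives $B = B' \hookrightarrow X'$ automatically from the (dense, continuous) embedding $X \hookrightarrow B$. The strategy is to reduce the statement to two conditions which, by the vector-valued Fréchet--Kolmogorov criterion of \cite{SIM}, together characterize relative compactness in $L^q(0,T;B)$: first, that the time-averages of the family are relatively compact in $B$; second, that the time-translations of the family are uniformly small in $L^q(0,T;B)$.

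First I would establish an Ehrling-type interpolation inequality: for every $\eta > 0$ there is a constant $C_\eta > 0$ such that
\[
\|v\|_B \leq \eta \|v\|_X + C_\eta \|v\|_{X'} \qquad \text{for all } v \in X.
\]
This follows from the compactness of $X \hookrightarrow B$ by the usual contradiction argument: were it false, one could extract a sequence bounded in $X$ with unit $B$-norm but vanishing $X'$-norm, whose $B$-limit (obtained from the compact embedding) would be simultaneously of unit norm and zero.

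Next I would control the time-translations. Writing $\psi_n(t+h)-\psi_n(t) = \int_t^{t+h}\partial_t\psi_n(s)\,ds$, using the uniform bound of $\partial_t\psi_n$ in $L^q(0,T;X')$ and Young's convolution inequality, one gets
\[
\|\psi_n(\cdot+h)-\psi_n\|_{L^q(0,T-h;X')} \leq h\,\sup_n\|\partial_t\psi_n\|_{L^q(0,T;X')} \xrightarrow[h\to 0]{} 0,
\]
uniformly in $n$. Combining this with the interpolation inequality and the uniform bound in $L^q(0,T;X)$ yields, for the $B$-valued translations,
\[
\|\psi_n(\cdot+h)-\psi_n\|_{L^q(0,T-h;B)} \leq \eta\,C' + C_\eta\,C\,h,
\]
so choosing $\eta$ small and then $h$ small makes the left side uniformly small: this is the equicontinuity condition. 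The averaging condition is easier, since $\int_{t_1}^{t_2}\psi_n\,ds$ stays bounded in $X$ by Jensen's inequality for the Bochner integral, hence relatively compact in $B$ by the compact embedding.

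The main obstacle I anticipate is packaging these two estimates into genuine relative compactness, which is precisely the content of Simon's criterion; the delicate points are ensuring the interpolation inequality is actually available (i.e. that $B \hookrightarrow X'$ continuously, which requires the Gelfand-triple structure) and handling the endpoint behaviour in time. Since the result is standard and cited here from \cite{SIM}, the alternative I would adopt in the paper is to invoke it directly; the sketch above indicates how it is reconstructed from the compact embedding together with the uniform time-derivative bound.
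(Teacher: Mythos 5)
The paper does not prove this lemma at all: it is stated as a quoted classical result and attributed directly to \cite{SIM}, so there is no internal proof to compare against. Your sketch is a correct reconstruction of the standard Aubin--Lions--Simon argument: the Ehrling interpolation inequality obtained by contradiction from the compact embedding $X\hookrightarrow B$, the translate estimate $\|\psi_n(\cdot+h)-\psi_n\|_{L^q(0,T-h;X')}\leq h\sup_n\|\partial_t\psi_n\|_{L^q(0,T;X')}$ via Young's inequality, their combination with the uniform $L^q(0,T;X)$ bound to get equicontinuity of translates in $L^q(0,T-h;B)$, and the relative compactness in $B$ of time averages, all fed into Simon's characterization of relatively compact subsets of $L^q(0,T;B)$. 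The one point you rightly flag, and which is genuinely needed for the lemma \emph{as stated} to make sense, is that the chain $X\hookrightarrow B\hookrightarrow X'$ must be available with the second embedding continuous and injective; this requires the Gelfand-triple structure (in particular density of $X$ in $B$), which holds in the paper's application with $X=H^1(\Omega)\cap L^\infty(\Omega)$ and $B=L^2(\Omega)$, but is an implicit hypothesis of the statement rather than a consequence of it. In short: your route supplies a proof where the paper supplies a citation; either is acceptable here, and your reconstruction contains no gap.
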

	\begin{proof}[\bf Proof of \cref{convergence}]
	    We use 3 steps to proof \cref{convergence}. First, we obtain the relative compactness of $(p^\epsilon)$ by applying Aubin-Lions lemma, and prove that there exists (up to extracting subsequences) a limit function. Then, we study its behavior on the boundary using the trace theorem. Finally, thanks to our uniform bounds, we show that the limit function satisfies a problem whose solution is unique. 
		
		{\bf Step 1: }In our problem, we need to apply the Lions-Aubin lemma with $q= 2, B = L^2(\Omega)$ and $X = H^1(\Omega) \cap L^\infty(\Omega)$ to $(\psi_\epsilon) = (p^\epsilon)_\epsilon$. The compact embedding from $X$ to $B$ is valid by the Rellich-Kondrachov theorem. In the previous section, we have already obtained uniform estimates that are sufficient to apply the Aubin-Lions lemma. The sequence $(p^\epsilon)_\epsilon $ is bounded in $L^2(0,T;L^2(\Omega))$ due to \cref{Linfty}
		\begin{center}
		    $|| p^\epsilon ||^2_{L^2(0,T;L^2(\Omega))} = \displaystyle \int_{0}^{T} \int_\Omega |p^\epsilon|^2 dx dt \leq || p^\epsilon ||^2_{L^\infty(0,T;L^2(\Omega))} \text{ meas}(\Omega) T < \infty,$
		\end{center}
		for $\epsilon < \epsilon_0$ small enough. Then, due to \cref{H1}, this sequence is bounded in $L^2(0,T;X)$.
		The sequence $(\partial_t p^\epsilon)_\epsilon$ is bounded in $L^2(0,T;X')$ by \cref{X'}. Thus, we can apply Aubin-Lions lemma and deduce that $(p^\epsilon)_\epsilon$ is strongly relatively compact in $L^2(0,T;L^2(\Omega))$. Therefore, there exists $p^0 \in L^2(0,T;H^1(\Omega))$ such that, up to extraction of subsequences, we have $p^\epsilon \rightarrow p^0$ strongly in $L^2((0,T) \times \Omega)$ and a.e., $\nabla p^\epsilon \rightharpoonup \nabla p^0$ weakly in $L^2((0,T) \times \Omega)$. 
		
		Moreover, by the triangle inequality we have $|n^\epsilon - h(p^0)| \leq |n^\epsilon - h(p^\epsilon)| + |h(p^\epsilon) - h(p^0)| \leq |n^\epsilon - h(p^\epsilon)| + ||h'||_{L^\infty([0,1])}|p^\epsilon - p^0|$. From the strong convergence of $p^\epsilon$ and $M^\epsilon$ in \cref{nconverge} when $\epsilon \rightarrow 0$, we can deduce that \begin{equation}
		    n^\epsilon \rightarrow n^0 := h(p^0) \text{ strongly in } L^2(0,T;L^2(\Omega))
		    \label{eqn:n0}
		\end{equation}
		
		{\bf Step 2:} Now, let us focus on the behavior on the boundary of the domain. Let the linear operator $\gamma$ be the trace operator on the boundary $(0,T) \times \partial \Omega$. For any $\epsilon\in (0,\epsilon_0)$ small enough, we have $\gamma (p^\epsilon) = p^\epsilon \mid_{(0,T) \times \partial \Omega}$, then by the trace theorem, one has 
		\begin{center}
		    $|| \gamma(p^{\epsilon})||_{L^2(0,T;L^2(\partial \Omega))} \leq C || p^{\epsilon}||_{L^2(0,T;H^1(\Omega))} $
		\end{center}
		where the constant $C$ only depends on $\Omega$. Then 
		\begin{center}
		    $|| \gamma(p^{\epsilon})||^2_{L^2(0,T;L^2(\partial \Omega))} \leq C^2 \displaystyle \int_{0}^{T} \int_\Omega| p^\epsilon|^2 dx dt + C^2 \displaystyle \int_{0}^{T} \int_\Omega | \nabla p^\epsilon(t,\cdot) |^2 dx dt < \infty,	$
		\end{center}
		due to \cref{Linfty} and \ref{H1}. Hence, we can deduce that $\gamma(p^\epsilon)$ is weakly convergent in $L^2((0,T) \times \partial\Omega)$. Let $\gamma^0 := \displaystyle \lim_{\epsilon \rightarrow 0}\gamma (p^\epsilon)$. For any function $\psi \in C^1(\overline{\Omega})$, and for $i = 1,\dots, d$, by Green's formula one has 
		\begin{center}
		    $ \displaystyle \int_\Omega \partial_i p^\epsilon \psi dx = -\int_\Omega p^\epsilon \partial_i \psi + \int_{\partial \Omega} \psi \gamma(p^\epsilon) \nu_i dS$. 
		\end{center}
		Since $p^\epsilon$ converges weakly to $p^0$ in $H^1(\Omega)$, when $\epsilon \rightarrow 0$, one has 
		\begin{center}
		    $ \displaystyle \int_\Omega \partial_i p^0 \psi dx = -\int_\Omega p^0 \partial_i \psi + \int_{\partial \Omega} \psi \gamma^0 \nu_i dS$. 
		\end{center}
		We can deduce that $\gamma^0 = \gamma(p^0)$. 
		
		{\bf Step 3:} We pass to the limit in the weak formulation of \cref{eqn:pb4}, for any test function $\psi$ such that $\psi \in C^2([0,T] \times \overline{\Omega}), \psi(T,\cdot) = 0$ in $\Omega ,$	one has 
		
		\noindent 
		\begin{tabular}{r l}
			& $- \displaystyle \underset{\text{strong convergence}}{\underbrace{\int_{0}^{T} \int_\Omega p^\epsilon \partial_t\psi dx dt}} \displaystyle + A \underset{\text{weak convergence}}{\underbrace{\int_{0}^{T} \int_\Omega \nabla p^\epsilon \cdot \nabla \psi dx dt}} =\underset{\text{weak convergence}}{\underbrace{\int_{\Omega} p^{\text{init},\epsilon} \psi(0,\cdot) dx}} $\\
			& $\displaystyle - 2\epsilon A \underset{\text{bounded as } \epsilon \rightarrow 0}{\underbrace{\int_{0}^{T} \int_\Omega \frac{\psi}{1 - \epsilon n^\epsilon} \nabla p^\epsilon \nabla n^\epsilon dx dt}} + \underset{\text{strong convergence}}{\underbrace{\int_{0}^{T} \int_\Omega \psi p^\epsilon(1 - p^\epsilon) (F_1 - F_2)(n^\epsilon,p^\epsilon) dx dt}}  $\\ 
			& $ \displaystyle  -DA \underset{\text{weak convergence}}{\underbrace{\int_{0}^{T} \int_{\partial \Omega} (p^\epsilon - p^{\text{ext},\epsilon})\dfrac{1 - \epsilon n^{\text{ext}, \epsilon}}{1 - \epsilon n^\epsilon}dS}} $.
		\end{tabular}\\
		
		\noindent
		The weak convergence of the last term on the boundary is obtained from \cref{Linfty} and \cref{a5}. When $\epsilon < \epsilon_0$, we have $n^{\text{ext},\epsilon}, n^\epsilon$ are uniformly bounded on $(0,T) \times \Omega$ with respect to $\epsilon$, then $\dfrac{1 - \epsilon n^{\text{ext}, \epsilon}}{1 - \epsilon n^\epsilon} $ converges strongly to $1$ when $\epsilon \rightarrow 0$. From the previous step, one has $p^\epsilon|_{\partial \Omega} = \gamma(p^\epsilon) \rightharpoonup \gamma(p^0)$ weakly in $L^2((0,T) \times \partial\Omega)$. Passing to the limit, we obtain that $p^0 \in L^2(0,T;H^1(\Omega))$ is a weak solution of the following problem
		\begin{center}
		    $\begin{cases}
		        \partial_t p^0 - A\Delta p^0 = p^0(1-p^0)(F_1-F_2)(n^0,p^0) & \text{ in } (0,T) \times \Omega, \\
		        p^0(0,\cdot) = p^\text{init} & \text{ in } \Omega \\
		        \frac{\partial p^0}{\partial \nu} = -D(p^0 - p^\text{ext}) & \text{ on } (0,T)\times \partial  \Omega.
		    \end{cases}$
		\end{center}
	Using \cref{eqn:n0}, we can deduce that this problem is a self-contained initial-boundary-value problem. Moreover, since $0$ and $1$ are respectively sub- and super-solutions of this problem, it admits a unique classical solution with values in $[0,1]$. Hence, all the extracted sub-sequences converge to the same limit $p^0$ and $p^0|_{\partial \Omega} = \gamma(p^0)$.  
	\end{proof}
    \section*{Acknowledgments}
	\begin{minipage}{0.15\textwidth}
            \includegraphics[height = 1.2 cm]{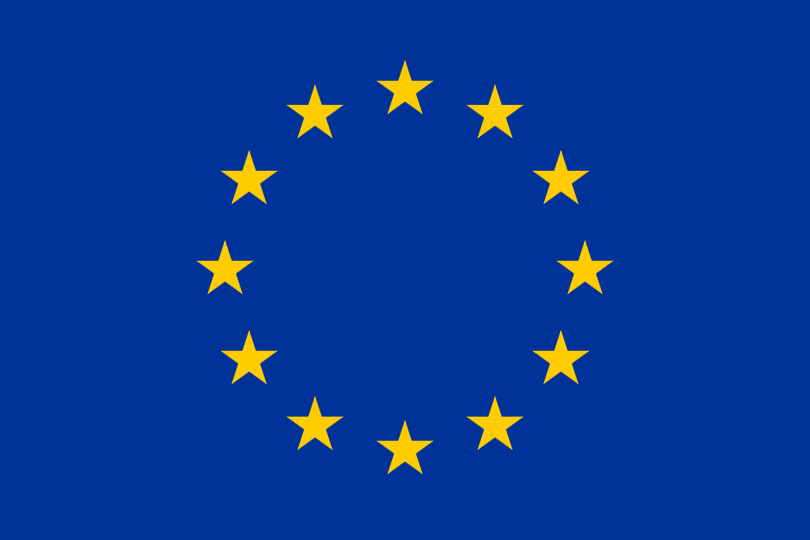} 
        \end{minipage}
        \begin{minipage}{0.8\textwidth}
            {This work has received funding from the European Union's Horizon 2020 research and innovation program under the Marie Sklodowska-Curie grant agreement No 945322.}
        \end{minipage}
\bibliographystyle{acm}
\bibliography{references}

\begin{thebibliography}{10}

\bibitem{BAR}
{\sc Barton, N.~H., and Turelli, M.}
\newblock Spatial waves of advance with bistable dynamics: cytoplasmic and
  genetic analogues of allee effects.
\newblock {\em The American Naturalist 178}, 3 (Sept. 2011), 48--75.

\bibitem{CHA}
{\sc Chan, M. H.~T., and Kim, P.~S.}
\newblock Modelling a wolbachia invasion using a slow–fast dispersal
  reaction–diffusion approach.
\newblock {\em Bull Math Biol 75}, 9 (June 2013).

\bibitem{DAN}
{\sc Daners, D.}
\newblock Robin boundary value problems on arbitrary domains.
\newblock {\em Trans. Amer. Math. Soc. 352}, 9 (Mar. 2000), 4207--4236.

\bibitem{FIF}
{\sc Fife, P.~C.}
\newblock {\em Mathematical Aspects of Reacting and Diffusing Systems}, 1st~ed.
\newblock Springer,, Berlin, Heidelberg, 1979.

\bibitem{FOC}
{\sc Focks, D.~A., Haile, D.~G., Daniels, E., and Mount, G.~A.}
\newblock Dynamic life table model for aedes aegypti (diptera: Culicidae):
  analysis of the literature and model development.
\newblock {\em Journal of medical entomology 30}, 6 (1993), 1003--1017.

\bibitem{GOD}
{\sc Goddard, J., and Shivaji, R.}
\newblock Stability analysis for positive solutions for classes of semilinear
  elliptic boundary-value problems with nonlinear boundary conditions.
\newblock {\em Proceedings of the Royal Society of Edinburgh: Section A
  Mathematics 147}, 5 (2017), 1019–1040.

\bibitem{GOR}
{\sc Gordon, P.~V., Ko, E., and Shivaji, R.}
\newblock Multiplicity and uniqueness of positive solutions for elliptic
  equations with nonlinear boundary conditions arising in a theory of thermal
  explosion.
\newblock {\em Nonlinear Analysis: Real World Applications 15\/} (Jan. 2014),
  51--57.

\bibitem{HIL08}
{\sc Hilhorst, D., Iida, M., Mimura, M., and Ninomiya, H.}
\newblock Relative compactness in lp of solutions of some 2m components
  competition-diffusion systems.
\newblock {\em Discrete and Continuous Dynamical Systems 21}, 1 (May 2008),
  233--244.

\bibitem{HIL13}
{\sc Hilhorst, D., Martin, S., and Mimura, M.}
\newblock Singular limit of a competition-diffusion system with large
  interspecific interaction.
\newblock {\em Journal of Mathematical Analysis and Applications 390\/} (06
  2012).

\bibitem{HOF}
{\sc Hoffmann, A.~A., Montgomery, B.~L., Popovici, J., Iturbe-Ormaetxe, I.,
  Johnson, P.~H., Muzzi, F., Greenfield, M., Durkan, M., Leong, Y.~S., Dong,
  Y., Cook, H., Axford, J., Callahan, A.~G., Kenny, N., Omodei, C., McGraw,
  E.~A., Ryan, P.~A., Ritchie, S.~A., Turelli, M., and O'Neill, S.~L.}
\newblock Successful establishment of wolbachia in aedes populations to
  suppress dengue transmission.
\newblock {\em Nature 476}, 7361 (06 2011), 454--457.

\bibitem{KOR02}
{\sc Korman, P.}
\newblock Exact multiplicity of solutions for a class of semilinear {Neumann}
  problems.
\newblock {\em Communications on Applied Nonlinear Analysis 9\/} (Jan. 2002).

\bibitem{KOR06}
{\sc Korman, P.}
\newblock {\em Chapter 6 {Global} {Solution} {Branches} and {Exact}
  {Multiplicity} of {Solutions} for {Two} {Point} {Boundary} {Value}
  {Problems}}, vol.~3.
\newblock North-Holland, Jan. 2006.

\bibitem{KOR96}
{\sc Korman, P., Li, Y., and Ouyang, T.}
\newblock Exact multiplicity results for boundary value problems with
  nonlinearities generalising cubic.
\newblock {\em Proceedings of the Royal Society of Edinburgh Section A:
  Mathematics 126}, 3 (1996), 599--616.
\newblock Publisher: Royal Society of Edinburgh Scotland Foundation.

\bibitem{LIO}
{\sc Lions, P.}
\newblock On the {Existence} of {Positive} {Solutions} of {Semilinear}
  {Elliptic} {Equations}.
\newblock {\em SIAM Review 24\/} (1982), 441--467.

\bibitem{MUR}
{\sc Murray, J.~D.}
\newblock {\em Mathematical {Biology} {II}: {Spatial} {Models} and {Biomedical}
  {Applications}}.
\newblock Springer Science \& Business Media, Feb. 2011.

\bibitem{OTR}
{\sc Otero, M., Schweigmann, N., and Solari, H.~G.}
\newblock A stochastic spatial dynamical model for aedes aegypti.
\newblock {\em Bulletin of mathematical biology 70}, 5 (2008), 1297--1325.

\bibitem{OUY98}
{\sc Ouyang, T., and Shi, J.}
\newblock Exact {Multiplicity} of {Positive} {Solutions} for a {Class} of
  {Semilinear} {Problems}.
\newblock {\em Journal of Differential Equations 146}, 1 (June 1998), 121--156.

\bibitem{PAO}
{\sc Pao, C.~V.}
\newblock {\em Nonlinear Parabolic and Elliptic Equations}, 1st~ed.
\newblock Springer,, Boston, MA, 1992.

\bibitem{PER}
{\sc Perthame, B.}
\newblock {\em {Parabolic equations in biology}}.
\newblock Lecture Notes on Mathematical Modelling in the Life Sciences.
  {Springer}, 2015.

\bibitem{KOR97}
{\sc Philip, K., Li, Y., and Tiancheng, O.}
\newblock An {Exact} {Multiplicity} {Result} for a class of {Semilinear}
  {Equations}.
\newblock {\em Communications in Partial Differential Equations 22}, 3-4 (Jan.
  1997), 661--684.

\bibitem{SCHA}
{\sc Schaaf, R.}
\newblock Global behaviour of solution branches for some {Neumann} problems
  depending on one or several parameters.
\newblock 1--31.

\bibitem{SHI}
{\sc Shi, S., and Li, S.}
\newblock Existence of solutions for a class of semilinear elliptic equations
  with the {Robin} boundary value condition.
\newblock {\em Nonlinear Analysis: Theory, Methods \& Applications 71}, 7 (Oct.
  2009), 3292--3298.

\bibitem{SIM}
{\sc Simon, J.}
\newblock Compact sets in the space $l^p(o,t; b)$.
\newblock {\em Annali di Matematica pura ed applicata\/} (Sept. 1986), 65--96.

\bibitem{SMO2}
{\sc Smoller, J.}
\newblock {\em Shock waves and reaction—diffusion equations}, vol.~258.
\newblock Springer Science \& Business Media, 2012.

\bibitem{SMO}
{\sc Smoller, J., and Wasserman, A.}
\newblock Global bifurcation of steady-state solutions.
\newblock {\em Journal of Differential Equations 39}, 2 (1981), 269--290.

\bibitem{STR16}
{\sc Strugarek, M., and Vauchelet, N.}
\newblock {Reduction to a single closed equation for 2 by 2 reaction-diffusion
  systems of Lotka-Volterra type}.
\newblock {\em {SIAM Journal on Applied Mathematics} 76}, 5 (2016), 2060--2080.

\bibitem{WAN1}
{\sc Wang, S.-H.}
\newblock A correction for a paper by {J}. {Smoller} and {A}. {Wasserman}.

\bibitem{WAN2}
{\sc Wang, S.-H., and Kazarinoff, N.~D.}
\newblock Bifurcation of steady-state solutions of a scalar reaction-diffusion
  equation in one space variable.
\newblock {\em Journal of the Australian Mathematical Society 52}, 3 (June
  1992), 343--355.

\bibitem{WER}
{\sc Werren, J., Baldo, L., and Clark, M.}
\newblock {\it Wolbachia}: master manipulators of invertebrate biology.
\newblock {\em Nat Rev Microbiol 6\/} (Oct. 2008), 741--751.

\bibitem{ZHA}
{\sc Zhang, J., Li, S., and Xue, X.}
\newblock Multiple solutions for a class of semilinear elliptic problems with
  {Robin} boundary condition.
\newblock {\em Journal of Mathematical Analysis and Applications 388}, 1 (Apr.
  2012), 435--442.

\end{thebibliography}
	
\end{document}